\newtheorem{theorem}{Theorem}[section]
\newtheorem{lemma}[theorem]{Lemma}
\newtheorem{proposition}[theorem]{Proposition}
\newtheorem{corollary}[theorem]{Corollary}
\newtheorem{problem}[theorem]{Problem}
\theoremstyle{definition}
\newtheorem{definition}[theorem]{Definition}
\newtheorem{example}[theorem]{Example}
\newtheorem{remark}[theorem]{Remark}
\begin{document}

\title[Algebras, Functions, Trees, and Integrals]
{Graded Algebras, Algebraic Functions,\\
Planar Trees, and Elliptic Integrals}
\author[Vesselin Drensky]{Vesselin Drensky}
\date{}
\address{Institute of Mathematics and Informatics,
Bulgarian Academy of Sciences,
Acad. G. Bonchev Str., Block 8,
1113 Sofia, Bulgaria}
\email{drensky@math.bas.bg}

\thanks
{Partially supported by Grant KP-06 N 32/1 of 07.12.2019
``Groups and Rings -- Theory and Applications'' of the Bulgarian National Science Fund.}

\subjclass[2020]{16-02; 16P90; 16R10; 16S15; 16W50; 05A15; 05C05; 05C30; 08B20; 13A50; 15A72; 17A50; 33B10; 33C75.}
\keywords{Graded algebras, monomial algebras, algebras with polynomial identity, Hilbert series, algebraic series, transcendental series,
free magma, free nonassociative algebras, planar trees, elliptic integrals.}

\maketitle
\begin{center}
{\it Dedicated to the 70-th anniversary of Antonio Giambruno,\\
a mathematician, person and friend.}
\end{center}

\begin{abstract}
This article surveys results on graded algebras and their Hilbert series.
We give simple constructions of finitely generated graded associative algebras $R$ with Hilbert series
$H(R,t)$ very close to an arbitrary power series $a(t)$ with exponentially bounded nonnegative integer coefficients.
Then we summarize some related facts on algebras with polynomial identity.
Further we discuss the problem how to find series $a(t)$ which are rational/algebraic/transcendental over ${\mathbb Q}(t)$.
Applying a classical result of Fatou we conclude that if a finitely generated graded algebra has a finite Gelfand-Kirillov dimension,
then its Hilbert series is either rational or transcendental. In particular the same dichotomy holds
for the Hilbert series of a finitely generated algebra with polynomial identity.
We show how to use planar rooted trees to produce algebraic power series.
Finally we survey some results on noncommutative invariant theory which show that we can obtain as Hilbert series various algebraic functions
and even elliptic integrals.
\end{abstract}

\section*{Introduction}

We consider algebras $R$ over a field $K$. Except Sections \ref{section planar trees} and \ref{section invariant theory}
all algebras are finitely generated and associative. The field $K$ is an arbitrary of any characteristic except in
Section \ref{section invariant theory} when it is of characteristic 0.

The purpose of this article is to survey some results, both old and recent, on graded algebras and their Hilbert series.
In Section \ref{section growth and Hilbert series} we discuss the growth of algebras, and graded algebras and their Hilbert series.
Then in Section \ref{section prescribed Hilbert series} we give constructions of graded algebras with prescribed Hilbert series.
Section \ref{section PI-algebras} is devoted to algebras with polynomial identities, or PI-algebras.
We survey some results concerning basic properties and the growth of such algebras.
Section \ref{section power series} deals with power series with nonnegative integer coefficients.
We consider methods to produce series which are transcendental over ${\mathbb Q}(t)$
and graded algebras with transcendental Hilbert series.
Combining a classical result of Fatou from 1906 with a theorem of Shirshov from 1957 we obtain immediately
that the Hilbert series of a finitely generated graded PI-algebra is either rational or transcendental.
We also survey some constructions of algebraic power series based on automata theory and theory of formal languages.
In the next Section \ref{section planar trees} we consider a method for construction of algebraic power series with
nonnegative integer coefficients. The main idea is to combine results on planar rooted trees with number of leaves
divisible by a given integer with the fact that submagmas of free $\Omega$-magmas are also free.
Finally, in Section \ref{section invariant theory} we use methods of noncommutative invariant theory to construct free graded algebras
(also nonassociative and not finitely generated) with Hilbert series which are either algebraic or transcendental.
In particular, we give simple examples of free nonassociative algebras with Hilbert series which are elliptic integrals.

If not explicitly stated, all power series in our exposition will have nonnegative integer coefficients.
Usually, when we state theorems about power series we do not present them in the most general form and restrict the considerations
to the case of nonnegative integer coefficients.

\section{Growth of algebras and Hilbert series}\label{section growth and Hilbert series}

If $R$ is a finite dimensional algebra we can measure how big it is using its dimension $\dim(R)$ as a vector space.
But how to measure infinite dimensional algebras? If $R$ is an algebra (not necessarily associative)
generated by a finite dimensional vector space $V$, then the {\it growth function}
of $R$ is defined by
\[
g_V(n)=\dim(R^n),\quad R^n=V^0+V^1+V^2+\cdots+V^n,\quad n=0,1,2,\ldots.
\]
This definition has the disadvantage that depends on the generating vector space $V$.
For example, the algebra of polynomials in $d$ variables $K[X_d]=K[x_1,\ldots,x_d]$ is generated by the vector space $V$
with basis $X_d=\{x_1,\ldots,x_d\}$ and the growth function $g_V(n)$ is
\[
g_V(n)=\binom{n+d}{d}=\frac{(n+d)(n+d-1)\cdots(n+1)}{d!}
=\frac{n^d}{d!}+{\mathcal O}(n^{d-1}).
\]
The algebra $K[X_d]$ is generated also by the monomials of first and second degree,
i.e. by the vector space $W=V+V^2$. Then
\[
g_W(n)=\binom{2n+d}{d}=\frac{2^dn^d}{d!}+{\mathcal O}(n^{d-1}).
\]
What is common between both generating functions? There is a standard method to compare
eventually monotone increasing and positive valued functions
$f:{\mathbb N}_0={\mathbb N}\cup\{0\}\to {\mathbb R}$.
This class of functions consists of all functions $f$ such that there exists an $n_0\in {\mathbb N}$
such that $f(n_0)\geq 0$ and $f(n_2)\geq f(n_1)\geq f(n_0)$
for all $n_2\geq n_1\geq n_0$.
We define a partial ordering $\preceq$ and equivalence $\sim$ on the set of such functions.
We assume that
$f_1\preceq f_2$ for two functions $f_1$ and $f_2$ (and $f_2$ {\it grows faster than} $f_1$) if and only if
there exist positive integers
$a$ and $p$ such that for all sufficiently large $n$
the inequality $f_1(n)\leq af_2(pn)$ holds
and $f_1\sim f_2$ if and only if $f_1\preceq f_2$ and $f_2\preceq f_1$.
This allows to obtain some invariant of the growth because
$g_V(n)\sim g_W(n)$ for any generating vector spaces $V$ and $W$ of the algebra $R$.
The equivalence is expressed in the following notion. The limit superior
\[
\text{GKdim}(R)=\limsup_{n\to\infty}\log_n(g_V(n))
\]
is called the {\it Gelfand-Kirillov dimension} of $R$.
It is known that $\text{GKdim}(R)$ does not depend on the system of generators of the algebra $R$.

Below we give a brief information for the values of the Gelfand-Kirillov dimension of finitely generated associative algebras.
For details we refer to the book by Krause and Lenagan \cite{KL}.

\begin{theorem}\label{values of GKdim}
{\rm (i)} If $R$ is commutative
then $\text{\rm GKdim}(R)$ is an integer equal to the transcendence degree of the algebra $R$.

{\rm (ii)} If $R$ is associative then $\text{\rm GKdim}(R)\in\{0,1\}\cup[2,\infty]$
and every of these reals is realized as a Gelfand-Kirillov dimension.
\end{theorem}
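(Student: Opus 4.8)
The plan for part (i) is to reduce every finitely generated commutative $R$ to a polynomial ring. I would first record three properties of the Gelfand--Kirillov dimension, each immediate from the definition by comparing growth functions with respect to compatible generating sets: (a) $\text{GKdim}$ does not increase under passage to a subalgebra or to a homomorphic image; (b) if $R\subseteq S$ with $S$ finitely generated as an $R$-module, then $\text{GKdim}(R)=\text{GKdim}(S)$; and (c) the computation already made in this section, $g_V(n)=\binom{n+m}{m}=n^m/m!+\mathcal{O}(n^{m-1})$ for $K[x_1,\dots,x_m]$, which gives $\text{GKdim}(K[x_1,\dots,x_m])=m$. If $R$ is a finitely generated commutative domain of transcendence degree $m$, Noether normalization provides a polynomial subalgebra $K[x_1,\dots,x_m]\subseteq R$ over which $R$ is a finite module, and (b) and (c) give $\text{GKdim}(R)=m$. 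For an arbitrary finitely generated commutative $R$ I would factor out the nilradical (which leaves $\text{GKdim}$ unchanged, since $R$ is Noetherian), embed the reduced ring into the finite product of the domains $R/P$ over the minimal primes $P$ of $R$, and combine (a) with the domain case and with the fact that the Gelfand--Kirillov dimension of a finite direct product is the maximum of the Gelfand--Kirillov dimensions of the factors; this identifies $\text{GKdim}(R)$ with $\max_P\bigl(\text{transcendence degree of }R/P\bigr)$, i.e. with the transcendence degree of $R$.

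For part (ii) fix a finitely generated associative $R$ and a finite-dimensional generating subspace $V$, and set $g(n)=g_V(n)=\dim(V^0+V^1+\dots+V^n)$, a nondecreasing integer sequence. If $g(n+1)=g(n)$ for some $n$, then $V\cdot(V^0+\dots+V^n)\subseteq V^0+\dots+V^n$, so by induction $V^0+\dots+V^m=V^0+\dots+V^n$ for all $m\ge n$; since $V$ generates $R$, this forces $R=V^0+\dots+V^n$ to be finite dimensional and $\text{GKdim}(R)=0$. Otherwise $g$ is strictly increasing, so $g(n)\ge n+1$ and $\text{GKdim}(R)\ge1$. Thus the real content of the statement is the absence of values in the interval $(1,2)$ --- Bergman's gap theorem --- together with the realization of the allowed values. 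For the gap I would argue by contradiction: if $g(n+1)-g(n)\ge n$ for all large $n$, then summing the increments yields $g(n)\ge cn^2$ for some $c>0$ and all large $n$, whence $\text{GKdim}(R)\ge2$; so there must exist (indeed, arbitrarily large) integers $n$ with $g(n+1)-g(n)<n$. The key step, which I expect to be the main obstacle, is Bergman's combinatorial lemma: once the increment $g(n+1)-g(n)$ drops below $n$ at a single value of $n$, the increments can never grow again, so $g$ is bounded above by a linear function and $\text{GKdim}(R)\le1$. Its proof is a delicate induction on the level $m\ge n$ that propagates the bound on the number of ``newly added'' basis vectors using $V^0+\dots+V^{m+1}=(V^0+\dots+V^m)+V\cdot(V^0+\dots+V^m)$; granting it, no Gelfand--Kirillov dimension lies strictly between $1$ and $2$.

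Finally I would realize every value in $\{0,1\}\cup[2,\infty]$. The values $0$ and $1$ are attained by any finite-dimensional algebra and by $K[x]$; an integer $d\ge2$ by $K[x_1,\dots,x_d]$ via (c); and $\infty$ by the free associative algebra $K\langle x,y\rangle$, whose growth function $g(n)=2^{n+1}-1$ is exponential. For a prescribed real number $\alpha\ge2$ I would use a monomial algebra of Borho--Kraft type: inside $K\langle x,y\rangle$ one keeps the monomials with at most a bounded number of occurrences of $y$, but only when the numbers of $x$'s between consecutive $y$'s lie in a fixed sparse subset $B\subseteq\mathbb{N}$, tuned so that the number of retained monomials of degree $n$ is asymptotic to $n^{\alpha-1}$; the span of the discarded monomials is then a two-sided ideal $I$, and the factor $K\langle x,y\rangle/I$ has $\text{GKdim}=\alpha$. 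The routine points here are to check that the set of retained monomials is closed under passing to contiguous subwords (so that $I$ really is an ideal) and to carry out the asymptotic count; all of this is spelled out in \cite{KL}.
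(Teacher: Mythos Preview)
The paper does not actually prove this theorem: it is a survey statement, with part~(i) called classical, the exclusion of $(1,2)$ in part~(ii) attributed to Bergman~\cite{Bg}, and the realization of values in $[2,\infty)$ to Borho and Kraft~\cite{BK}, with details deferred to~\cite{KL} and to the explicit monomial construction reproduced in Section~\ref{section prescribed Hilbert series}. Your sketch follows exactly the standard line of those references.

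Your argument for part~(i) via Noether normalization, passage to $R/\mathrm{nil}(R)$, and minimal primes is the textbook one. Your outline of Bergman's gap --- reducing to the combinatorial lemma that once $g(n{+}1)-g(n)\le n$ the increments remain bounded --- matches the proof in~\cite{KL}, and you correctly flag that lemma as the real work. Your Borho--Kraft realization is correct in spirit; the version the paper actually writes out in Section~\ref{section prescribed Hilbert series} is the special case where one retains monomials with at most two occurrences of $y$ and requires the single block of $x$'s between them to have length in a prescribed set $S\subseteq\mathbb{N}_0$, yielding the Hilbert series $\dfrac{1}{1-t}+\dfrac{t}{(1-t)^2}+\dfrac{t^2a(t)}{(1-t)^2}$ with $a(t)=\sum_{s\in S}t^s$. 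This covers $\alpha\in[2,3)$, and the paper then obtains all larger $\alpha$ by tensoring with a polynomial ring $K[y_1,\dots,y_m]$ rather than by allowing more $y$'s, but either route works.
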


Part (i) of Theorem \ref{values of GKdim} is a classical result.
In part (ii) the restriction $\text{GKdim}(R)\not\in(1,2)$ is the {\it Bergman Gap Theorem} \cite{Bg}.
Algebras $R$ with $\text{GKdim}(R)\in[2,\infty)$ are realized by Borho and Kraft \cite{BK},
see also the modification of their construction in the book of the author \cite[Theorem 9.4.11]{D1}.
We shall mimic these constructions in the next section.

In the sequel we shall work with graded algebras.
The algebra $R$ is {\it graded} if it is a direct sum of vector subspaces $R_0,R_1,R_2,\ldots$
called {\it homogeneous components} of $R$
and
\[
R_mR_n\subset R_{m+n},\quad m,n=0,1,2,\ldots.
\]
It is convenient to assume that $R_0=0$ or $R_0=K$. In most of our considerations the generators of $R$ are of first degree.
The formal power series
\[
H(R,t)=\sum_{n\geq 0}\dim(R_n)t^n,
\]
is called the {\it Hilbert series} (or {\it Poincar\'e series}) of $R$.

We often shall work with power series with nonnegative integer coefficients
\[
a(t)=\sum_{n\geq 0}a_nt^n,\quad a_n\in{\mathbb N}_0.
\]
The advantage of studying such power series instead of the
sequence $a_n$, $n=0,1,2,\ldots$, of the coefficients of $a(t)$ is that we may apply the theory of analytic
functions or to find some recurrence relations. In particular, we
may find a closed formula for $a_n$ or to estimate its asymptotic behavior.

Our {\it rational functions} will be fractions of two polynomials with rational coefficients,
i.e. elements of the field ${\mathbb Q}(t)$.
Similarly, algebraic and transcendental functions are also over ${\mathbb Q}(t)$.
{\it Algebraic functions} $a(t)$ have the property that $p(t,a(t))=0$ for some polynomial $p(t,z)\in {\mathbb Q}[t,z]$
and {\it transcendental functions} do not satisfy any polynomial equation with rational coefficients.
As usually, if $a(t)$ converges in a neighborhood of $0$ to a rational, algebraic or transcendental function,
we say that $a(t)$ is also rational, algebraic or transcendental, respectively.

Algebraic functions have a nice characterization
given by  the Abel-Tannery-Cockle-Harley-Comtet theorem \cite[p. 287]{A}, \cite{Co, C1, C2, H, T} (see \cite{BD} for comments).

\begin{theorem}\label{Abel and Co}
The algebraic function
\[
f(t)=\sum_{n\geq 0}a_nt^n
\]
is $D$-finite, i.e. it satisfies a linear differential equation with coefficients
which are polynomials in $t$. Equivalently, its coefficients $a_n$ satisfy a linear recurrence
with coefficients which are polynomials in $n$.
\end{theorem}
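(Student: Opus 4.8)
The plan is to exploit the fact that an algebraic power series $f$ generates a \emph{finite} extension of the differential field $(\mathbb{Q}(t),\,d/dt)$, so that all of the derivatives $f,f',f'',\dots$ are trapped inside a finite-dimensional $\mathbb{Q}(t)$-vector space; a dimension count then forces a nontrivial linear relation among finitely many of them, and clearing denominators turns this relation into the desired linear differential equation. First I would fix a nonzero $p(t,z)\in\mathbb{Q}[t,z]$ with $p(t,f(t))=0$ and, replacing $p$ by an irreducible factor that still annihilates $f$, assume $p$ is irreducible with $d=\deg_z p$. Then
\[
E=\mathbb{Q}(t)[z]/(p(t,z))\cong\mathbb{Q}(t)(f)\subset\mathbb{Q}((t))
\]
is a field, and as a $\mathbb{Q}(t)$-vector space it has basis $1,f,\dots,f^{d-1}$, so $\dim_{\mathbb{Q}(t)}E=d$.

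Next I would show every derivative of $f$ lies in $E$. Differentiating the identity $p(t,f)=0$ (as formal power series) gives
\[
\frac{\partial p}{\partial t}(t,f)+\frac{\partial p}{\partial z}(t,f)\cdot f'=0 .
\]
Here $(\partial p/\partial z)(t,f)\in E$ is nonzero, hence invertible in the field $E$, so $f'=-(\partial p/\partial t)(t,f)\,/\,(\partial p/\partial z)(t,f)\in E$. Since $E$ is closed under the derivation $d/dt$, induction shows $f^{(k)}\in E$ for all $k\ge 0$. In particular the $d+1$ elements $f,f',\dots,f^{(d)}$ of the $d$-dimensional space $E$ are linearly dependent over $\mathbb{Q}(t)$; multiplying through by a common denominator yields polynomials $q_0,\dots,q_d\in\mathbb{Q}[t]$, not all zero, with $\sum_{k=0}^{d}q_k(t)f^{(k)}(t)=0$. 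This is exactly the assertion that $f$ is $D$-finite.

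For the equivalence with a polynomial recurrence, I would substitute $f(t)=\sum_{n\ge 0}a_nt^n$ into $\sum_k q_k(t)f^{(k)}(t)=0$ and extract the coefficient of $t^n$. Since
\[
t^{\,j}\frac{d^k}{dt^k}\,t^n=n(n-1)\cdots(n-k+1)\,t^{\,n+j-k},
\]
each monomial $t^j$ occurring in a $q_k$ contributes, at the level of $t^n$-coefficients, a term polynomial in $n$ times a shifted coefficient $a_{n+j-k}$; summing over the finitely many monomials gives a fixed-length linear combination of $a_{n-s},\dots,a_{n+r}$ whose coefficients are polynomials in $n$, and requiring it to vanish for all $n$ is precisely a polynomial recurrence for $(a_n)$. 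The converse is the same computation run backwards: the operators ``multiply by $t$'' and $t\,d/dt$ act on $\sum a_nt^n$ as the shift $a_n\mapsto a_{n-1}$ and as multiplication $a_n\mapsto n a_n$, and a recurrence for $(a_n)$ reassembles into an ODE of the same type; I would only need to take mild care near $n=0$ with the low-order coefficients and initial conditions.

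The step I expect to be the crux is the claim that $(\partial p/\partial z)(t,f)$ is nonzero and therefore invertible, since this is what keeps the derivatives inside the finite extension $E$. I would justify it from the irreducibility (equivalently, minimality of $\deg_z p$): a nonzero polynomial in $z$ of degree $<d$ cannot annihilate $f$, and in any case a nonzero element of a field is invertible. Granting this, the remaining ingredients — the derivation extending to $E$, the dimension count, and the bookkeeping translating the differential equation into a recurrence and back — are entirely formal.
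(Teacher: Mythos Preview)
Your argument is correct and is in fact the standard proof of this classical result. Note, however, that the paper does not supply its own proof of this theorem: it is quoted as the Abel--Tannery--Cockle--Harley--Comtet theorem with references to the original sources, so there is nothing in the paper to compare your approach against.

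A couple of small points you may want to tighten. First, the inclusion $\mathbb{Q}(t)(f)\subset\mathbb{Q}((t))$ presumes $a_n\in\mathbb{Q}$; this is harmless in the paper's setting (Hilbert series), but in general one simply works inside $K((t))$ for a field $K$ containing the $a_n$, or argues abstractly that the derivation on $\mathbb{Q}(t)$ extends uniquely to the separable extension $E$. Second, the sentence ``since $E$ is closed under the derivation $d/dt$'' deserves one line of justification: once $f'\in E$, every element $\sum c_i(t)f^i$ of $E$ has derivative $\sum c_i'(t)f^i+\sum i\,c_i(t)f^{i-1}f'\in E$. Finally, in the converse direction your sketch is right in spirit; to make it precise, multiply the recurrence by $t^{n+r}$ and sum over $n\ge N$, rewrite each term via the Euler operator $\theta=t\,d/dt$ (so that $c_j(\theta)$ acts on $t^n$ as multiplication by $c_j(n)$), obtain $Lf=g$ with $L$ a differential operator with polynomial coefficients and $g$ a polynomial coming from the finitely many indices $n<N$, and then apply $(d/dt)^{\deg g+1}$ to get a homogeneous equation.
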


We shall recall the usual definition of different kinds of growth of a sequence $a_n$, $n=0,1,2,\ldots$, of
complex numbers.
If there exist positive $b$ and $c$ such that
$\vert a_n\vert\leq bn^c$ for all $n$, we say that the sequence
is of {\it polynomial growth}. (We use this terminology
although it is more precise to say that the sequence
$a_n$, $n=0,1,2,\ldots$, is polynomially bounded.)
If there exist $b_1,b_2>0$ and $c_1,c_2>1$ such that $\vert a_n\vert\leq b_2c_2^n$ for all $n$
and $b_1c_1^{n_k}\leq \vert a_{n_k}\vert$ for a subsequence $a_{n_k}$, $k=0,1,2,\ldots$,
then the sequence is of {\it exponential growth}.
Finally, if for any $b,c>0$ there exists a subsequence
$a_{n_k}$, $k=0,1,2,\ldots$, such that $\vert a_{n_k}\vert > bn_k^c$
and for any $b_1>0$, $c_1>1$ the inequality
$\vert a_n\vert < b_1c_1^n$ holds for all sufficiently large $n$,
then the sequence is of {\it intermediate growth}.

The following statement is well known.

\begin{proposition}\label{algebraic Hilbert series}
The coefficients of an algebraic power series
\[
a(t)=\sum_{n\geq 0}a_nt^n
\]
are either of polynomial or of exponential growth.
\end{proposition}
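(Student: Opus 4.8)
The plan is to study $a(t)$ as an analytic function and to locate its singularities. Choose $p(t,z)=\sum_{i=0}^{d}c_i(t)z^{i}\in{\mathbb Q}[t,z]$ of least degree $d$ in $z$ with $p(t,a(t))=0$; by minimality $p$ is irreducible over ${\mathbb Q}(t)$, hence separable, so its leading coefficient $c_d(t)$ and its discriminant $\Delta(t)$ with respect to $z$ are nonzero polynomials. Wherever $c_d(t)\Delta(t)\neq 0$ the equation $p(t,z)=0$ has $d$ distinct simple roots depending analytically on $t$, so $a(t)$ continues analytically there; thus the singularities of $a(t)$ lie in the finite set $\{\,t:c_d(t)\Delta(t)=0\,\}$, and its radius of convergence $\rho$ equals the distance from $0$ to the nearest singularity, with $\rho=+\infty$ if there is none. (In particular $\rho>0$, since an algebraic power series always converges near $0$.) By Cauchy--Hadamard, $\limsup_{n}|a_n|^{1/n}=1/\rho$, with $1/\rho:=0$ when $\rho=\infty$. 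The result now follows by distinguishing cases according to the size of $\rho$.

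If $\rho=+\infty$, then $a(t)$ is entire and algebraic; for $|t|$ large the functional equation gives $|a(t)|^{d}\le\sum_{i<d}|c_i(t)/c_d(t)|\,|a(t)|^{i}$, whence $|a(t)|\le C(1+|t|)^{N}$ on all of ${\mathbb C}$, and Liouville's theorem (via the Cauchy estimates) forces $a(t)$ to be a polynomial; then $a_n=0$ for large $n$, which is polynomial growth. If $1<\rho<\infty$, then $\limsup|a_n|^{1/n}=1/\rho<1$, so $a_n\to0$ geometrically and $(a_n)$ is bounded, again of polynomial growth. If $\rho<1$, then $1/\rho>1$ and a routine application of Cauchy--Hadamard gives exponential growth: fixing $\varepsilon>0$ small enough that $1<1/\rho-\varepsilon$, one has $|a_n|\le b_2(1/\rho+\varepsilon)^{n}$ for all $n$ (after enlarging $b_2$ to absorb the finitely many small indices), while by the definition of $\limsup$ there are indices $n_k\to\infty$ with $|a_{n_k}|>(1/\rho-\varepsilon)^{n_k}$; this is exactly the growth condition of the definition with $c_1=1/\rho-\varepsilon$, $c_2=1/\rho+\varepsilon$.

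The remaining case $\rho=1$ is the heart of the matter and the main obstacle, since here Cauchy--Hadamard yields only $|a_n|\le(1+\varepsilon)^{n}$, far weaker than a polynomial bound; the ``algebraic'' nature of the function must be used again, now through its local behaviour. At each singular point $\zeta$ with $|\zeta|=1$, Puiseux's theorem expresses $a(t)$ as a convergent series in $(t-\zeta)^{1/e}$, so up to a summand analytic at $\zeta$ it is a finite sum of terms $c\,(1-t/\zeta)^{\gamma}h(t)$ with $\gamma\in{\mathbb Q}\setminus{\mathbb N}_0$ and $h$ analytic at $\zeta$. Assembling these (finitely many) singular expansions over the finitely many dominant singularities and applying the transfer theorem of singularity analysis --- equivalently, reading off $[t^n](1-t/\zeta)^{\gamma}=O(n^{-\gamma-1}\zeta^{-n})$ from the binomial series --- one gets $a_n=O(n^{M})\rho^{-n}=O(n^{M})$ with $M=\max_{\zeta}(-\gamma_\zeta-1)\in{\mathbb Q}$, i.e.\ polynomial growth (with $a_n\to0$ if $M<0$). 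Starting instead from Theorem~\ref{Abel and Co} is not enough by itself: $D$-finite sequences need not be polynomially or exponentially bounded, and the decisive extra input is precisely the shape of the asymptotics furnished by the Puiseux expansion of an algebraic function.
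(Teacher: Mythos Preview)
The paper does not actually prove this proposition; it is introduced with ``The following statement is well known'' and no argument follows. Your proof---locating the finitely many singularities of the algebraic function via the leading coefficient and discriminant, splitting into cases by the radius of convergence $\rho$, and handling the critical case $\rho=1$ through Puiseux expansions combined with Flajolet--Odlyzko transfer---is the standard route and is correct. It is essentially the asymptotic theory of coefficients of algebraic functions (compare the reference \cite{BD} that the paper cites alongside Theorem~\ref{Abel and Co}).

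Two small comments. First, in the $\rho=1$ case your phrase ``a finite sum of terms $c\,(1-t/\zeta)^{\gamma}h(t)$ with $h$ analytic at $\zeta$'' is justified by grouping the Puiseux series $\sum_{k\ge k_0}b_k(1-t/\zeta)^{k/e}$ according to $k\bmod e$; it might be worth saying so explicitly, and noting that the transfer theorem applies because an algebraic branch is automatically analytic in a $\Delta$-domain at each dominant singularity (the singular set being finite). Second, under the paper's standing convention that coefficients lie in ${\mathbb N}_0$, the case $1<\rho<\infty$ collapses immediately: $a_n\to 0$ in ${\mathbb N}_0$ forces $a_n=0$ eventually, so $a(t)$ is a polynomial. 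Your more general treatment without this hypothesis is of course also fine. Your closing observation that Theorem~\ref{Abel and Co} alone is insufficient (since $D$-finite sequences can have factorial or intermediate growth) is well taken.
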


Every algebra $R$ generated by a finite set $\{r_1,\ldots,r_d\}$
is a homomorphic image of the free associative algebra
$K\langle X_d\rangle=K\langle x_1,\ldots,x_d\rangle$. The map $\pi_0:x_i\to r_i$, $i=1,\ldots,d$,
is extended to a homomorphism $\pi:K\langle X_d\rangle\to R$ and $R\cong K\langle X_d\rangle/I$, $I=\ker(\pi)$.
If the ideal $I$ of $K\langle X_d\rangle$ is finitely generated, then the algebra $R$ is {\it finitely presented}.
An important special case of graded algebras is the class of {\it monomial algebras}.
Monomial algebras are factor algebras of $K\langle X_d\rangle$ modulo an ideal generated by monomials,
i.e. by elements of the free unitary semigroup $\langle X_d\rangle$.

Below we give some properties of Hilbert series. We start with commutative graded algebras.

\begin{theorem}\label{Hilbert series of commutative algebras}
Let $R$ be a finitely generated graded commutative algebra. Then:

{\rm (i)} {\rm (Theorem of Hilbert-Serre)} The Hilbert series $H(R,t)$ is a rational function with denominator which is a product of binomials
$1-t^m$.

{\rm (ii)} If
\[
H(R,t)=p(t)\prod\frac{1}{(1-t^{m_i})^{a_i}},\quad a_i\geq 1,\quad p(t)\in {\mathbb Z}[t],
\]
then the Gelfand-Kirillov dimension $\text{\rm GKdim}(R)$ is equal to the multiplicity of $1$ as a pole of $H(R,t)$:
If $p(1)\not=0$, then $\text{\rm GKdim}(R)=\sum a_i$.
\end{theorem}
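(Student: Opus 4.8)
The plan is to prove the two parts by rather different routes: part (i) by the classical Hilbert--Serre induction on the number of algebra generators, and part (ii) by analyzing the partial fraction decomposition of the rational function $H(R,t)$ and exploiting that its coefficients $\dim(R_n)$ are nonnegative.

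For (i), I would first pass to a statement about modules. Choose homogeneous algebra generators $y_1,\dots,y_s$ of $R$ of degrees $d_1,\dots,d_s\ge 1$, let $A=K[y_1,\dots,y_s]$ be the polynomial ring graded by $\deg y_i=d_i$, and regard $R$ as the cyclic finitely generated graded $A$-module $A/J$; note $\dim_K R_n<\infty$. It suffices to show that for every finitely generated graded $A$-module $M$ with finite-dimensional homogeneous components one has $H(M,t)=f(t)\big/\prod_{i=1}^s(1-t^{d_i})$ with $f(t)\in\mathbb{Z}[t,t^{-1}]$; since $R_n=0$ for $n<0$, the case $M=R$ then forces $f(t)\in\mathbb{Z}[t]$. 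The proof is induction on $s$. If $s=0$ then $M$ is finite-dimensional and $H(M,t)$ is a Laurent polynomial. For $s\ge 1$, multiplication by $y_s$ on $M$ yields, in each degree, the exact sequence
\[
0\to M'_n\to M_n\xrightarrow{y_s} M_{n+d_s}\to M''_{n+d_s}\to 0,
\]
where $M'=\{m\in M:y_sm=0\}$ and $M''=M/y_sM$. Since $A$ is Noetherian, $M'$ and $M''$ are finitely generated, and both are killed by $y_s$, hence are modules over $A/(y_s)\cong K[y_1,\dots,y_{s-1}]$. Multiplying the Euler relation $\dim M'_n-\dim M_n+\dim M_{n+d_s}-\dim M''_{n+d_s}=0$ by $t^{n+d_s}$ and summing over $n$ gives
\[
(1-t^{d_s})H(M,t)=H(M'',t)-t^{d_s}H(M',t),
\]
and the inductive hypothesis applied to $M'$ and $M''$ completes the step.

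For (ii), I would write the partial fraction decomposition $H(R,t)=q(t)+\sum_{\zeta}\sum_{j=1}^{r_\zeta}c_{\zeta,j}(1-t/\zeta)^{-j}$, where $q$ is a polynomial and the poles $\zeta$ lie among the roots of unity (each $1-t^{m_i}$ has only simple roots, all roots of unity). Let $r=\max_\zeta r_\zeta$. Since $[t^n](1-t/\zeta)^{-j}=\binom{n+j-1}{j-1}\zeta^{-n}$, we get $\dim(R_n)=\frac{n^{r-1}}{(r-1)!}f(n)+O(n^{r-2})$ with $f(n)=\sum_{r_\zeta=r}c_{\zeta,r}\zeta^{-n}$, a real-valued function periodic with period $N=\mathrm{lcm}$ of the orders of the relevant roots of unity. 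The main obstacle — and the one place where the nonnegativity $\dim(R_n)\ge 0$ is essential — is to show that $1$ occurs among the poles of maximal order $r$ and that its coefficient $c_{1,r}$ is positive. This I would do by averaging over a window of length $N$: since $\sum_{n=0}^{N-1}\zeta^{-n}=0$ for $\zeta\ne1$, one has $\frac{1}{N}\sum_{n=n_0}^{n_0+N-1}\dim(R_n)=\frac{n_0^{r-1}}{(r-1)!}c_{1,r}+O(n_0^{r-2})$, so nonnegativity forces $c_{1,r}\ge 0$. If $c_{1,r}=0$, then $f$ is a real periodic function of mean $0$ and, by maximality of $r$, not identically zero, hence $f(n_1)<0$ for some $n_1$, and then $\dim(R_{n_1+kN})<0$ for large $k$, a contradiction. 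Thus the pole of $H(R,t)$ at $t=1$ has order exactly $r$, i.e. $r$ equals the multiplicity of $1$ as a pole of $H(R,t)$, and moreover $c_{1,r}>0$.

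It remains to identify $r$ with $\text{GKdim}(R)$ and to specialize. Since $g_V(n)=\sum_{m=0}^n\dim(R_m)=[t^n]\big(H(R,t)/(1-t)\big)$, the term $c_{1,r}(1-t)^{-(r+1)}$ contributes $c_{1,r}\binom{n+r}{r}\sim\frac{c_{1,r}}{r!}n^{r}$, while every other term contributes only $O(n^{r-1})$ (it produces a pole of order $\le r$ at $1$ or at some $\zeta\ne1$); hence $g_V(n)=\frac{c_{1,r}}{r!}n^{r}+O(n^{r-1})$ with $c_{1,r}>0$, so $\text{GKdim}(R)=\limsup_n\log_n g_V(n)=r$. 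Finally, if $p(1)\ne 0$ there is no cancellation at $t=1$: each $1-t^{m_i}=(1-t)(1+t+\cdots+t^{m_i-1})$ has a simple zero at $t=1$, the second factor being $m_i\ne 0$ there, so $\prod(1-t^{m_i})^{a_i}$ vanishes to order exactly $\sum a_i$ at $t=1$ and $r=\sum a_i$, giving $\text{GKdim}(R)=\sum a_i$.
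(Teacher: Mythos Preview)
The paper is a survey and states this theorem as classical background without giving a proof, so there is nothing to compare against directly. Your argument is correct: part (i) is exactly the standard Hilbert--Serre induction on the number of generators via the multiplication-by-$y_s$ exact sequence, and part (ii) is a clean and correct partial-fractions analysis, the key point being that nonnegativity of $\dim R_n$ forces the pole at $t=1$ to have the maximal order among all poles (otherwise the periodic leading coefficient $f(n)$ has mean $\le 0$ and is not identically zero, hence is negative somewhere, contradicting $\dim R_n\ge 0$). One very small remark: in the averaging step you implicitly use $n^{r-1}=n_0^{r-1}+O(n_0^{r-2})$ uniformly for $n\in[n_0,n_0+N-1]$, which is of course fine and absorbed by your $O$-term; and strictly speaking you should note separately the trivial case where $H(R,t)$ is a polynomial (no poles at all), where $R$ is finite-dimensional and $\text{GKdim}(R)=0$.
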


The coefficients of the Hilbert series of a finitely generated commutative algebras
are a subject of many additional restrictions, see Macaulay \cite{M}.
The picture for noncommutative graded algebras is more complicated than in the commutative case.
Govorov \cite{G1} proved that if the set of monomials $U$ is finite, then
the Hilbert series of the monomial algebra $R=K\langle X\rangle/(U)$ is a rational function.
He conjectured \cite{G1, G2} that the same holds for the Hilbert series of finitely presented graded algebras.
By a theorem of Backelin \cite{Ba} this holds when the ideal $(U)$ is generated by a single homogeneous polynomial.
On the other hand Shearer \cite{Sh} presented an example of a finitely presented graded algebra with algebraic nonrational Hilbert series.
As he mentioned his construction gives also an example with a transcendental Hilbert series.
Another simple example of a finitely presented algebra with algebraic Hilbert series was given by Kobayashi \cite{K}.
It is interesting to mention that the rationality of the Hilbert series may depend on the base field $K$.
The following theorem is from the recent paper by Piontkovski \cite{Pi}.

\begin{theorem}\label{linear growth}
Let $K$ be a field of positive characteristic $p$ and let the coefficients of the Hilbert series $H(R,t)$ of the finitely generated graded algebra $R$
are bounded by a constant. If $H(R,t)$ is transcendental, then the base field $K$ contains an element
which is not algebraic over the prime subfield ${\mathbb F}_p$ of $K$. For every such field $K$ there exist graded algebras $R$
with transcendental Hilbert series $H(R,t)$ with coefficients bounded by a constant.
\end{theorem}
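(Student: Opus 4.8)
The plan is to prove the dichotomy in two independent directions: first the "if transcendental, then $K$ contains a transcendental element" implication, and second the construction of examples over every field $K$ that does contain such an element. For the first direction I would argue by contraposition: assume every element of $K$ is algebraic over ${\mathbb F}_p$, i.e. $K$ is an algebraic extension of ${\mathbb F}_p$ (equivalently, a union of finite fields), and show $H(R,t)$ is then rational. The hypothesis that $\dim(R_n)$ is bounded by a constant $C$ forces the Hilbert series to have linear growth, so by the Bergman Gap Theorem (Theorem \ref{values of GKdim}(ii)) either $\text{GKdim}(R)\in\{0,1\}$; in the GKdim $0$ case $R$ is finite dimensional and $H(R,t)$ is a polynomial, so the interesting case is $\text{GKdim}(R)=1$. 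Here the key structural input is that a finitely generated graded algebra whose homogeneous components have bounded dimension is, after passing to an associated monomial algebra, controlled by a finitely generated monoid-like object; over a field algebraic over ${\mathbb F}_p$ one can use the fact that the relevant linear-algebra data (the multiplication maps $R_m\otimes R_n\to R_{m+n}$, with bounded rank) are defined over a finite subfield ${\mathbb F}_q\subseteq K$, reducing the problem to an algebra of bounded growth over a finite field, for which finiteness of the field makes the sequence of "states" eventually periodic and $H(R,t)$ rational.

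For the construction of examples over a field $K$ containing an element $\theta$ transcendental over ${\mathbb F}_p$, the idea is to use $\theta$ to break periodicity. One takes a transcendental sequence $(a_n)$ of nonnegative integers bounded by a constant — for instance, using the classical fact that a bounded sequence whose generating series is rational must be eventually periodic, so any bounded non-eventually-periodic sequence (e.g. the characteristic sequence of the squares, or of an automatic-but-not-periodic set) gives a transcendental series $a(t)=\sum a_nt^n$. The task is to realize $a(t)$, or something close to it, as the Hilbert series of a finitely generated graded $K$-algebra, and this is exactly the kind of "prescribed Hilbert series" construction promised in Section \ref{section prescribed Hilbert series}; one mimics the Borho–Kraft type construction cited after Theorem \ref{values of GKdim}, but now the structure constants of the algebra are allowed to involve $\theta$, so that the presence of a transcendental element in $K$ is what makes the construction succeed over $K$ while it would fail over ${\mathbb F}_p$.

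**The main obstacle** I expect is the first direction: showing rationality over a field algebraic over ${\mathbb F}_p$. The naïve statement "everything is defined over a finite subfield" is not literally true for an infinitely generated phenomenon, and $R$, though finitely generated as an algebra, need not be finitely presented. The real content is an automata-theoretic one: a graded algebra with $\dim(R_n)\le C$ can be encoded by a finite-state device reading degrees in unary, whose transitions are linear maps over $K$ of bounded size; when $K$ is locally finite these maps take finitely many values, so the reachable configurations form a finite set and the Hilbert series is rational — but making this precise (and handling the fact that the generating set may not give a nice filtration) requires care, and this is presumably where Piontkovski's paper does the real work. In the write-up I would isolate this as a lemma: a graded algebra over a locally finite field with uniformly bounded homogeneous dimensions has rational Hilbert series, and then the theorem follows by combining it with the Bergman gap theorem and the explicit construction.
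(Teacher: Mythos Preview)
The paper does not prove Theorem~\ref{linear growth}; it is quoted without proof as a result of Piontkovski \cite{Pi}. So there is no ``paper's own proof'' to compare against, and your proposal should be judged on its own merits and against what one can infer about Piontkovski's argument from the reference.

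For the first direction your heuristic is sensible, and you are honest that the hard step---turning ``bounded homogeneous dimensions over a locally finite field'' into a genuine finite-state machine---is where the real work lies. Note, however, that the title of \cite{Pi} (\emph{Algebras of linear growth and the dynamical Mordell--Lang conjecture}) signals that Piontkovski's actual argument goes through arithmetic dynamics rather than a direct automata-theoretic periodicity argument: the sequence $\dim R_n$ is realized via iterates of an algebraic self-map, and the Skolem--Mahler--Lech / dynamical Mordell--Lang machinery over ${\mathbb F}_p$-algebraic fields is invoked to force rationality. Your finite-state idea may be recoverable, but it is not the route taken, and the obstacle you flag (no finite presentation, so the ``state'' is not a priori finite) is exactly why the dynamical viewpoint is brought in.

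For the second direction there is a genuine gap. You propose to mimic the Borho--Kraft / Section~\ref{section prescribed Hilbert series} constructions with structure constants involving $\theta$. But those constructions produce \emph{monomial} algebras, whose defining data are purely combinatorial and independent of the ground field; the very theorem you are proving says that over ${\mathbb F}_p$ no such bounded-coefficient transcendental example exists, and hence no monomial algebra (over any field) can have bounded transcendental Hilbert series. So the construction over a field containing a transcendental $\theta$ cannot be a monomial tweak of Borho--Kraft: the element $\theta$ must enter the \emph{relations} in an essential, non-monomial way, precisely so that the algebra has no model over any locally finite field. Your proposal does not indicate how to do this, and that is the substantive missing idea in the existence half.
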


In the next sections we shall discuss the problem how to construct more algebras with algebraic and nonrational Hilbert series.

By Proposition \ref{algebraic Hilbert series} if the Hilbert series $H(R,t)$ is algebraic,
then its coefficients grow either exponentially or polynomially.
Hence a power series with intermediate  growth of the coefficients
is transcendental. In \cite{G1} Govorov constructed a two-generated monomial algebra
with Hilbert series with intermediate growth of the coefficients.

A very natural class of finitely generated graded algebras
with Hilbert series with coefficients of intermediate growth are universal enveloping algebras of infinite dimensional Lie algebras
of subexponential growth. The first example of this kind was given by Smith \cite{Sm}:

\begin{theorem}\label{example of Martha Smith}
{\rm (i)} If $L$ is an infinite dimensional graded Lie algebra with subexponential growth of the coefficients of its Hilbert series, then
the Hilbert series of its universal enveloping algebra $U(L)$ is with intermediate growth of the coefficients.

{\rm (ii)} There exists a two-generated infinite dimensional graded Lie algebra $L$ with Hilbert series
\[
H(L,t)=t+\frac{1}{1-t}.
\]
Then the Hilbert series of $U(L)$ is with intermediate growth of the coefficients:
\[
H(U(L),t)=\frac{1}{1-t}\prod_{n\geq 1}\frac{1}{1-t^n}.
\]
\end{theorem}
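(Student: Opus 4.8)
The plan is to prove the two parts of Theorem~\ref{example of Martha Smith} separately, with part~(ii) being largely a computation once the right Lie algebra is exhibited and part~(i) being the conceptual core.

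\medskip

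\emph{Part (i).} The key tool is the Poincar\'e--Birkhoff--Witt theorem: if $L=\bigoplus_{n\geq 1}L_n$ is a graded Lie algebra with $\dim L_n=\ell_n$, then $U(L)$ has a basis of ordered monomials in a homogeneous basis of $L$, so its Hilbert series is the formal product
\[
H(U(L),t)=\prod_{n\geq 1}\frac{1}{(1-t^n)^{\ell_n}}.
\]
Write $H(U(L),t)=\sum_{n\geq 0}u_nt^n$. First I would establish the upper bound: since $L$ has subexponential growth, for every $c>1$ one has $\ell_0+\ell_1+\cdots+\ell_n\leq c^n$ for all large $n$, and a standard estimate (each partition-type monomial of degree $n$ is a product of at most $n$ basis elements, each of degree $\leq n$) gives $u_n\leq\binom{s_n+n}{n}$ where $s_n=\sum_{k\leq n}\ell_k$, which is $\leq(1+c)^n$ eventually; letting $c\to 1$ shows $u_n<b_1c_1^n$ for every $c_1>1$ and large $n$. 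Then I would establish the super-polynomial lower bound: because $L$ is infinite dimensional, there are infinitely many degrees $k$ with $\ell_k\geq 1$; picking $k_1<k_2<\cdots$ with $\ell_{k_i}\geq 1$ and forming products of distinct basis elements in these degrees shows that $u_n$ for $n=k_1+\cdots+k_m$ is at least the number of such partitions, which grows faster than any polynomial in $n$ (this is where the infinite dimensionality is used decisively). Combining the two bounds with the definition of intermediate growth from the paragraph preceding the theorem finishes~(i).

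\medskip

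\emph{Part (ii).} Here I would first exhibit the Lie algebra: take $L=\langle x,y\rangle$ with $x$ in degree $1$ and all higher components one-dimensional, spanned by $y_n$ in degree $n$ for $n\geq 1$ (with $y_1=y$), and relations making $[x,y_n]=y_{n+1}$ and $[y_m,y_n]=0$; one checks via the Jacobi identity (or by realizing $L$ inside a suitable associative algebra, e.g.\ differential-operator or Witt-type construction) that this is a well-defined graded Lie algebra, two-generated, infinite dimensional, with $H(L,t)=t+\sum_{n\geq 1}t^n=t+\tfrac{1}{1-t}$, which has subexponential (indeed eventually constant) coefficients. Then the PBW formula gives $\dim L_1=2$, $\dim L_n=1$ for $n\geq 2$, so
\[
H(U(L),t)=\frac{1}{(1-t)^2}\prod_{n\geq 2}\frac{1}{1-t^n}=\frac{1}{1-t}\prod_{n\geq 1}\frac{1}{1-t^n},
\]
as claimed, and the intermediate growth of its coefficients follows from part~(i) since $L$ satisfies its hypotheses.

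\medskip

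\emph{Main obstacle.} The routine part is the PBW bookkeeping and the explicit product formula in~(ii). The delicate point is the quantitative lower bound in~(i): one must show that merely having infinitely many nonzero homogeneous components of $L$ forces the partition-counting quantities $p(n\mid\text{parts from an infinite set}\,)$ to beat every polynomial. I expect to handle this by comparing with the classical partition function --- if $\ell_k\geq 1$ for $k$ in an infinite set $S$, the number of ways to write $n$ as a sum of distinct elements of $S$ is eventually positive and unbounded, and a soft argument (or an appeal to the growth of the ordinary partition function when $S$ is cofinite, as in~(ii)) gives super-polynomial growth; making this uniform over all admissible $L$ is the one place requiring genuine care.
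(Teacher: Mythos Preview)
The paper itself does not prove this theorem; it is stated as a result of Smith, and the only thing the paper adds is the explicit description of the Lie algebra in part~(ii) --- precisely the algebra with basis $\{x,y_1,y_2,\ldots\}$ and relations $[x,y_i]=y_{i+1}$, $[y_i,y_j]=0$ that you wrote down. Your treatment of part~(ii), including the PBW computation of $H(U(L),t)$, is correct and matches the paper.

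Your argument for part~(i), however, has a real gap in the upper bound. The inequality $u_n\le\binom{s_n+n}{n}$ is valid, but the claim that this is $\le(1+c)^n$ is false. Already when $\ell_k\equiv 1$ (so $s_n=n$, certainly subexponential) one gets $\binom{2n}{n}\sim 4^n/\sqrt{\pi n}$, and no ``letting $c\to 1$'' rescues this; if $s_n$ is allowed to be genuinely close to $c^n$ the binomial coefficient is wildly super-exponential. The clean replacement is analytic: since $\ell_n$ is subexponential, for every $0<t<1$ the sum $\sum_n\ell_n\log(1-t^n)^{-1}\le(1-t)^{-1}\sum_n\ell_n t^n$ is finite, so the PBW product $\prod_n(1-t^n)^{-\ell_n}$ converges on $|t|<1$. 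Hence $H(U(L),t)$ has radius of convergence $\ge 1$, i.e.\ $\limsup u_n^{1/n}\le 1$, which is exactly the subexponential upper bound you need.

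Your lower bound also needs a repair. Counting representations of $n$ as sums of \emph{distinct} elements of $S=\{k:\ell_k\ge 1\}$ can fail completely: if $S$ is the set of powers of $2$ then every $n$ has exactly one such representation, so the count is bounded, not super-polynomial. Use repetitions instead: for any $m$ elements $k_1<\cdots<k_m$ of $S$, the coefficient of $t^n$ in $\prod_{i=1}^m(1-t^{k_i})^{-1}$ is asymptotically $n^{m-1}/((m-1)!\,k_1\cdots k_m)$ along an arithmetic progression, and this lower-bounds $u_n$; since $m$ is arbitrary, $u_n$ beats every polynomial along a subsequence. With these two fixes your outline is complete.
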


The Lie algebra $L$ in Theorem \ref{example of Martha Smith} (ii) has a basis $\{x,y_1,y_2,\ldots\}$,
$\deg(x)=1$, $\deg(y_i)=i$, $i=1,2,\ldots$, and the defining relations of $L$ are
\[
[x,y_i]=y_{i+1},\quad [y_i,y_j]=0,\quad i,j=1,2,\ldots.
\]

Lichtman \cite{L} generalized the result of Smith for different classes of Lie algebras.
Later Petrogradsky \cite{P1, P2} developed the theory of functions with intermediate growth of the coefficients
which are realized as Hilbert series in the known examples of algebras with intermediate growth.
In this way he introduced a detailed scale to measure the growth of algebras which reflected also on the growth of the coefficients
of the Hilbert series of graded associative and Lie algebras.

The algebras in the examples of Smith \cite{Sm}, Lichtman \cite{L}, and Petrogradsky \cite{P1, P2} are not finitely presented.
Borho and Kraft [BK] conjectured that finitely
presented associative algebras cannot be of intermediate growth.
For a counterexample it is sufficient to show that there exists a finitely presented and
infinite dimensional Lie algebra with polynomial growth.
Leites and Poletaeva \cite{LP} showed that over a field of characteristic 0
the classical Lie algebras $W_d,H_d,S_d,K_d$ of polynomial vector fields are finitely presented.
Recall that the algebra $W_d=\text{Der}(K[X_d])$ consists of the derivations of the polynomial algebra $K[X_d]$.
The special algebra $S_d\subset W_{d+1}$ and the Hamiltonian algebra $H_d\subset W_{2d}$ annihilate suitable exterior differential forms,
and the contact algebra $K_d\subset W_{2d-1}$ multiplies a certain form.
The easiest example is the Witt algebra $W_1$ of the derivations of $K[x]$.

The first example of a finitely presented graded algebra with Hilbert series with intermediate growth of the coefficients
was given by Ufnarovskij \cite{U1}. In his example the algebra is two-generated by elements of degree 1 and 2.
The Lie algebra $W_1$ of the derivations of the polynomial algebra in one variable over a field  $K$ of characteristic 0
has a graded basis
\[
\left\{\delta_{i-1}=x^i\frac{d}{dx}\mid i\geq 0\right\}, \quad \deg\left(x^i\frac{d}{dx}\right)=i-1,
\]
and multiplication
\[
[\delta_{i-1},\delta_{j-1}]=\left[x^i\frac{d}{dx},x^j\frac{d}{dx}\right]
=(j-i)x^{i+j-1}\frac{d}{dx}=(j-i)\delta_{i+j-2}.
\]
Hence for $i\geq 2$ the derivations $\delta_{i+1}$ may be defined inductively by
\[
\delta_{i+1}=\frac{1}{i-1}[\delta_1,\delta_i].
\]

\begin{theorem}\label{example of Ufnarovskij}
Let $L$ be the Lie subalgebra of $W_1$ generated by $\delta_1$ and $\delta_2$. It has a basis
$\{\delta_i\mid i=1,2,\ldots\}$
and defining relations
\[
[\delta_2,\delta_3]=\delta_5\text{ and } [\delta_2,\delta_5]=3\delta_7.
\]
The universal enveloping algebra $U(L)$ of $L$ is generated by $f_1=x$ and $f_2=y$, where
\[
f_{i+1}=\frac{1}{i-1}(f_1f_i-f_if_1),\quad i=2,3,\ldots.
\]
It is a factor algebra of the free algebra $K\langle x,y\rangle$ modulo the ideal generated by
\[
(f_2f_3-f_3f_2)-f_5\text{ and }(f_2f_5-f_5f_2)-3f_7.
\]
If $\deg(f_i)=i$, $i=1,2,\ldots$, then
\[
H(U(L),t)=\prod_{n\geq 1}\frac{1}{1-t^n}.
\]
\end{theorem}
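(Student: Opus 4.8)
The plan is to establish, in order, the basis of $L$, its defining relations, the presentation of $U(L)$, and the Hilbert series. \emph{Basis of $L$.} From $[\delta_m,\delta_n]=(n-m)\delta_{m+n}$ in $W_1$ we have $[\delta_1,\delta_i]=(i-1)\delta_{i+1}$, so $\delta_{i+1}=\frac{1}{i-1}[\delta_1,\delta_i]$ for $i\geq 2$, and by induction every $\delta_i$, $i\geq 1$, lies in the subalgebra generated by $\delta_1,\delta_2$. Conversely $S=\bigoplus_{i\geq 1}K\delta_i$ is closed under the bracket and contains $\delta_1,\delta_2$, so $L\subseteq S$, hence $L=S$; the $\delta_i$ are linearly independent, being part of a basis of $W_1$. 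Thus $L$ is graded with $\dim L_n=1$ for $n\geq 1$ and $L_0=0$.

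\emph{Defining relations of $L$.} Let $\mathcal L$ be the free Lie algebra over $K$ (of characteristic zero) on generators $e_1,e_2$ of degrees $1$ and $2$, put $e_{i+1}=\frac{1}{i-1}[e_1,e_i]$ for $i\geq 2$, and let $M=\mathcal L/I$, where $I$ is the Lie ideal generated by $r_1=[e_2,e_3]-e_5$ and $r_2=[e_2,e_5]-3e_7$. Since $r_1,r_2$ vanish in $L$ there is a graded surjection $M\to L$, so by the first part it suffices to show $\dim M_n\leq 1$. I would prove the identity $[e_i,e_j]=(j-i)e_{i+j}$ in $M$ for all $i,j\geq 1$; granting it, every iterated bracket of $e_1,e_2$ in $M$ is a scalar multiple of some $e_m$, so $M$ is spanned by $e_1,e_2,\ldots$, which gives $\dim M_n\leq 1$ and $M\cong L$. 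The identity is proved by induction on $n=i+j$; by antisymmetry assume $i\leq j$ and set $A_k=[e_k,e_{n-k}]$. The case $k=1$ is the definition of $e_n$. For $3\leq k\leq n/2$, substituting $e_k=\frac{1}{k-2}[e_1,e_{k-1}]$, expanding by the Jacobi identity, and applying the inductive hypothesis in degrees $n-1$ and $n-k+1$ yields the recurrence
\[
A_k=\frac{1}{k-2}\Bigl((n-2k+1)(n-2)\,e_n-(n-k-1)\,A_{k-1}\Bigr),
\]
which expresses $A_k$ as $\alpha_ke_n+\beta_kA_2$ with $\beta_k=(-1)^k\binom{n-4}{k-2}$ and, by a short computation with the coefficients, forces $A_k=(n-2k)e_n$ for all $k$ as soon as $A_2=(n-4)e_n$. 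So everything reduces to the single identity $A_2=[e_2,e_{n-2}]=(n-4)e_n$.

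It remains to pin down $[e_2,e_{n-2}]$. For $n=4$ it reads $[e_2,e_2]=0$; for $n=5$ it is $r_1$; for $n=7$ it is $r_2$. For even $n\geq 6$ one adjoins the extra identity $A_{n/2}=[e_{n/2},e_{n/2}]=0$, which combined with the recurrence and $\beta_{n/2}\neq 0$ (here $K$ has characteristic zero) pins down $A_2=(n-4)e_n$. The remaining case, odd $n\geq 9$, is the main obstacle, because there is no analogue of $A_{n/2}=0$; I would obtain the missing linear relation by applying $\text{ad}(e_2)$ to the degree-$(n-2)$ identity $[e_3,e_{n-5}]=(n-8)e_{n-2}$ and using $r_1$ to rewrite $[[e_2,e_3],e_{n-5}]$ as $[e_5,e_{n-5}]$; after substituting the recurrence this becomes an equation in $e_n$ and $A_2$ whose coefficient of $A_2$ is, by a routine check, nonzero. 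The essential point is that some relation must be invoked here, since every identity coming from $\text{ad}(e_1)$ alone, or from $\text{ad}(e_2)$ without using $r_1$ or $r_2$, collapses to a tautology.

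\emph{Presentation of $U(L)$ and the Hilbert series.} Applying $U$ to $L=\mathcal L/I$ gives $U(L)=U(\mathcal L)/(I)=K\langle e_1,e_2\rangle/(r_1,r_2)$, since the two-sided ideal of $K\langle e_1,e_2\rangle$ generated by the Lie ideal $I$ is already generated by $r_1,r_2$. Writing $x=e_1$, $y=e_2$ and reading Lie brackets as commutators turns $e_{i+1}=\frac{1}{i-1}[e_1,e_i]$ into $f_{i+1}=\frac{1}{i-1}(f_1f_i-f_if_1)$ and $r_1,r_2$ into $(f_2f_3-f_3f_2)-f_5$ and $(f_2f_5-f_5f_2)-3f_7$, which is the stated presentation. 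Finally, since $L$ is graded with a one-dimensional homogeneous component in every degree $n\geq 1$, the Poincar\'e--Birkhoff--Witt theorem furnishes $U(L)$ with a basis of ordered monomials in $\delta_1,\delta_2,\ldots$ (with $\deg\delta_i=i$); hence $\dim U(L)_n$ is the number of partitions of $n$ and $H(U(L),t)=\prod_{n\geq 1}\frac{1}{1-t^n}$.
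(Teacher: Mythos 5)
The paper itself gives no proof of this theorem: it is quoted as Ufnarovskij's result with a citation to \cite{U1}, so there is no argument in the text to compare yours against, and your proposal has to be judged on its own. Judged so, it is correct, and the steps you leave as ``routine checks'' do check out. The Jacobi expansion indeed gives $A_k=\frac{1}{k-2}\bigl((n-2k+1)(n-2)e_n-(n-k-1)A_{k-1}\bigr)$, the coefficients satisfy $\beta_k=(-1)^k\binom{n-4}{k-2}$, and once $A_2=(n-4)e_n$ is known, the identity $(n-2k+1)(n-2)-(n-k-1)(n-2k+2)=(k-2)(n-2k)$ propagates $A_k=(n-2k)e_n$ along the chain. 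In the even case $\beta_{n/2}=(-1)^{n/2}\binom{n-4}{n/2-2}\neq 0$ in characteristic zero, and in the crucial odd case $n\geq 9$ the relation obtained by applying $\text{ad}(e_2)$ to $[e_3,e_{n-5}]=(n-8)e_{n-2}$ reads $(n-8)A_2=A_5+(n-7)A_3$, whose $A_2$-coefficient after substitution is $(n-8)+\binom{n-4}{3}+(n-7)(n-4)>0$, so $A_2$ is indeed forced into $Ke_n$. (Two small points: for $n=9$ the term $A_5$ falls outside your stated range $3\leq k\leq n/2$, so either note that the recurrence is valid for $3\leq k\leq n-2$ or use $A_5=-A_4$; and these relations by themselves only give $A_2=\lambda e_n$ for a uniquely determined $\lambda$ --- to get $\lambda=n-4$, which the induction needs in higher degrees, either compute the $\alpha$'s or, more cheaply, push the equation through the graded surjection $M\to L$ you set up at the outset, where $e_n\mapsto\delta_n\neq 0$ and $[e_2,e_{n-2}]\mapsto(n-4)\delta_n$.) With that said, the reduction of the presentation of $U(L)$ to the Lie presentation and the PBW/partition count of $\dim U(L)_n$ are standard and correctly handled, so your write-up amounts to a complete self-contained proof of a statement the survey only cites.
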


In a note added in the proofs Shearer \cite{Sh} gave two more examples of finitely presented graded algebras with Hilbert series
which also have an intermediate growth of the coefficients. His algebras are generated by three elements and have three defining relations
but, as in the example of Ufnarovskij \cite{U1} one of the generators is of second degree.

\begin{theorem}\label{example of Shearer}
Let $R=K\langle x_1,x_2,y\rangle/(U)$, where
\[
\deg(x_1)=\deg(x_2)=1,\deg(y)=2,
\]
\[
U=\{x_1y-yx_1,x_1x_2x_1-x_2y,x_2^2y\}.
\]
Then the Hilbert series of $R$ is
\[
H(R,t)=\frac{1}{(1-t)(1-t^2)}\prod_{n\geq 1}\frac{1}{1-t^n}.
\]
If in $U$ we replace $x_2^2y$ with $x_2^2$, then
\[
H(R,t)=\frac{1}{(1-t)(1-t^2)}\prod_{n\geq 1}(1+t^n).
\]
\end{theorem}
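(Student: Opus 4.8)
The plan is to compute $H(R,t)$ by producing a complete — necessarily infinite — Gr\"obner--Shirshov basis $G$ of the ideal $(U)$, i.e.\ a confluent set of reductions in the sense of Bergman's Diamond Lemma, and then reading off the Hilbert series from the set of $G$-normal words. Give the free monoid $\langle x_1,x_2,y\rangle$ the admissible order refining the grading $\deg(x_1)=\deg(x_2)=1$, $\deg(y)=2$: compare words first by degree, then make shorter words smaller, then break remaining ties lexicographically with $x_1<x_2<y$. With respect to this order the three generators of $U$ become the reduction rules
\[
yx_1\ \longrightarrow\ x_1y,\qquad x_1x_2x_1\ \longrightarrow\ x_2y,\qquad x_2^2y\ \longrightarrow\ 0,
\]
and one next forms and tries to resolve their overlap ambiguities.

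The main obstacle is that this system is not confluent and its completion is infinite — as it must be: a finite Gr\"obner--Shirshov basis would give $R$ the Hilbert series of a finitely presented monomial algebra, hence rational by Govorov's theorem, whereas the asserted answer $\tfrac1{(1-t)(1-t^2)}\prod_{n\ge1}(1-t^n)^{-1}$ has coefficients of intermediate growth and is even transcendental by Proposition \ref{algebraic Hilbert series}. Already the overlap of $yx_1$ with $x_1x_2x_1$ in $yx_1x_2x_1$ reduces on the two sides to the distinct normal words $x_1yx_2x_1$ and $yx_2y$, forcing a new relation; iterating generates an infinite family. To organise it I would follow Ufnarovskij's computation in Theorem \ref{example of Ufnarovskij}, whose algebra contributes precisely the factor $\prod_{n\ge1}(1-t^n)^{-1}$ of the answer: introduce homogeneous elements $v_1=x_2,\,v_2=y,\,v_3,v_4,\dots\in(U)$ with $\deg v_i=i$, the higher ones built from $x_1,x_2,y$ by iterated (anti)commutators with $x_1$ in the manner of the $f_i$ there, together with the straightening relations among the $v_i$ and $x_1$; then show by induction on degree that this explicit infinite set $G$ is self-reduced and that all its overlap ambiguities resolve. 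Checking that infinitely many relations have only resolvable ambiguities is where the real work lies.

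Granting that $G$ is a Gr\"obner--Shirshov basis, the Diamond Lemma yields a homogeneous $K$-basis of $R$ by the $G$-normal words, and $H(R,t)$ can then be computed from $G$ — via the Anick chain complex of the leading monomials of $G$, or by matching the normal words bijectively with the ordered monomials in a graded family of ``generators'' having two of degree $1$, two of degree $2$, and one of each degree $n\ge3$ — which gives
\[
H(R,t)=\frac{1}{(1-t)(1-t^2)}\prod_{n\ge1}\frac{1}{1-t^n}.
\]
For the second assertion only the last rule changes, $x_2^2y\to 0$ being replaced by the shorter $x_2^2\to 0$; rerunning the completion and the count shows that in each degree exactly one of these generators passes from polynomial type, contributing $(1-t^n)^{-1}$, to square-zero type, contributing $1+t^n$ — namely $x_2$ itself in degree $1$ and the corresponding higher elements in the other degrees — which, using Euler's identity $\prod_{n\ge1}(1+t^n)=\prod_{n\ge1}(1-t^{2n-1})^{-1}$, gives
\[
H(R,t)=\frac{1}{(1-t)(1-t^2)}\prod_{n\ge1}(1+t^n).
\]
Alternatively, since $x_1$ and $y$ commute one may try to peel off $K[x_1,y]$ as a polynomial factor with Hilbert series $\tfrac1{(1-t)(1-t^2)}$ and reduce to Ufnarovskij's algebra, but making this freeness precise seems to require the same normal-form bookkeeping.
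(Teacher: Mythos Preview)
The paper does not prove this theorem: it is quoted from a note added in proof in Shearer~\cite{Sh} and stated without argument, so there is no proof in the paper to compare your approach against.

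On its own terms, what you have written is a strategy rather than a proof, and you are candid about this. The Diamond Lemma setup is correct, your term order gives the leading monomials you claim, and your argument that the completion must be infinite---since a finite one would force a rational Hilbert series by Govorov's theorem, contradicting the transcendental answer---is sound. But every substantive step is deferred: the infinite family $G$ is never exhibited; the line ``$v_1=x_2,\ v_2=y,\ v_3,v_4,\dots\in(U)$'' is garbled ($x_2$ and $y$ are not elements of the ideal, and the higher $v_i$ are never defined); the confluence verification is explicitly postponed (``where the real work lies'') and then simply assumed (``granting that $G$ is a Gr\"obner--Shirshov basis''); and the passage from $G$-normal words to the product $\frac{1}{(1-t)(1-t^2)}\prod_{n\ge 1}(1-t^n)^{-1}$ is asserted by analogy with Ufnarovskij's example rather than computed. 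The analogy is suggestive---the partition factor certainly invites it---but the two algebras are different and the parallel does not substitute for a calculation. Your closing alternative, exploiting that $x_1$ and $y$ commute, is in fact the more direct route: the relation $x_1x_2x_1=x_2y$ propagates to $x_1(x_2x_1^n)x_1=(x_2x_1^n)y$ for every $n\ge 0$, and organizing a spanning set around $x_1$, $y$, and the elements $x_2x_1^n$, with linear independence checked via an explicit module on which the three relations visibly hold, avoids the infinite confluence check altogether. As it stands, the proposal identifies the phenomenon and the difficulty but does not establish the theorem.
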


Ko\c{c}ak \cite{Ko1} modified the construction of Shearer \cite{Sh} such that the three generators are of first degree:

\begin{theorem}\label{three generators of first degree}
Let
\[
U=\{x_2^2x_1-x_1x_2^2,x_2^2x_3-x_1x_3x_1,x_1x_3^2,x_1x_2x_1,x_1x_2x_3,x_3x_2x_1,x_3x_2x_3\}.
\]
Then the coefficients of the Hilbert series of the algebra $R=K\langle x_1,x_2,x_3\rangle/(U)$ are of intermediate growth.
\end{theorem}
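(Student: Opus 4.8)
The plan is to compute $H(R,t)$ by producing a linear basis of $R$ consisting of reduced monomials, and then to recognize the resulting power series as one whose coefficients have intermediate growth, arguing exactly as in the proof of Theorem~\ref{example of Shearer}. In fact one does not need the closed form of $H(R,t)$: it suffices to bound the coefficients $\dim R_n$ above by a subexponential function and below by a superpolynomial one, and both bounds drop out of the same normal-form analysis.

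First I would fix the degree--lexicographic order on the monomials of $K\langle x_1,x_2,x_3\rangle$ with $x_1>x_2>x_3$ and apply Bergman's Diamond Lemma to the generating set $U$. With this order the seven elements of $U$ have the pairwise distinct leading monomials $x_1x_2x_1,x_1x_2x_2,x_1x_2x_3,x_1x_3x_1,x_1x_3x_3,x_3x_2x_1,x_3x_2x_3$, and the corresponding rewriting rules are $x_1x_2^2\mapsto x_2^2x_1$, $x_1x_3x_1\mapsto x_2^2x_3$, together with five rules of the form $w\mapsto 0$. Already the overlap ambiguities of $U$ fail to resolve and force new relations such as $x_1x_3x_2^2x_1-x_2^2x_3x_2^2$; iterating, one obtains an explicit infinite family $g_1,g_2,\ldots$ of relations whose leading monomials have unboundedly growing length, carrying longer and longer runs of $x_2$'s separated by $x_3$'s. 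The first, and main, task is to guess this family correctly and to prove, by a double induction on the index and on the degree, that $U\cup\{g_k\mid k\ge1\}$ is a Gröbner basis of the defining ideal, i.e.\ that every remaining ambiguity resolves. This Gröbner basis is necessarily infinite, since a finite one would make $H(R,t)$ rational, contradicting the conclusion we are after.

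Second, the reduced monomials — the words avoiding all these leading monomials — form a $K$--basis of $R$ by the Diamond Lemma, and I would organize them by cutting each one at its occurrences of $x_3$, writing it as $s_0x_3s_1x_3\cdots x_3s_m$ with each $s_i$ a monomial in $x_1,x_2$. The rules $x_1x_2x_1\mapsto 0$ and $x_1x_2^2\mapsto x_2^2x_1$ restrict each $s_i$ to the simple form $x_2^{a_i}x_1^{b_i}$ (with at most one extra trailing $x_2$), exactly as for the subalgebra generated by $x_1,x_2$, whose Hilbert series is rational of polynomial growth. The rules involving $x_3$, together with the tail $\{g_k\}$, then restrict how consecutive blocks may be joined and force the characteristic runs $x_2^{2k}$; bookkeeping these constraints yields $H(R,t)$, which I expect to involve the factor $\prod_{n\ge1}(1-t^n)^{-1}$ in the manner of Theorem~\ref{example of Shearer}, this factor arising precisely from those runs.

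Finally, intermediate growth of the coefficients follows as in Theorems~\ref{example of Martha Smith} and \ref{example of Shearer}: the partition numbers, i.e.\ the coefficients of $\prod_{n\ge1}(1-t^n)^{-1}$, are of intermediate growth, and neither adding a series with polynomially bounded coefficients nor multiplying by one can destroy this. The hard part is the Gröbner basis step: since no finite Gröbner basis exists, confluence must be verified through an induction over an infinite family of relations rather than by a finite check. What makes it tractable is that $R$ is built as a deliberate modification of Shearer's algebra of Theorem~\ref{example of Shearer}, so its completion and the combinatorics of its reduced monomials run parallel to Shearer's, and the intermediate-growth bookkeeping can be transported from there.
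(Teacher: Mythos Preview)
The paper does not prove this theorem: it is a survey, and the result is quoted from Ko\c{c}ak \cite{Ko1} without argument. So there is no ``paper's own proof'' to compare your proposal against.

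On its own merits, your plan is the natural one and your setup is correct: under the degree--lexicographic order with $x_1>x_2>x_3$ the seven leading monomials are exactly as you list them, and the guiding idea that $x_2^2$ should play the role of Shearer's degree-two generator $y$ (so that runs $x_2^{2k}$ behave like $y^k$ and a partition-type factor appears) is precisely the point of Ko\c{c}ak's construction. What you have written, however, is a strategy and not a proof. The decisive step---producing the infinite family $\{g_k\}$ explicitly, checking that all overlap ambiguities of $U\cup\{g_k\}$ resolve, and then carrying out the normal-form count to extract $H(R,t)$---is asserted rather than done, and ``I expect to involve the factor $\prod_{n\ge1}(1-t^n)^{-1}$'' is not yet a computation. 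None of this is insurmountable, but to turn the proposal into a proof you would have to actually exhibit the completed rewriting system and verify confluence; for that, or for an alternative argument, you should consult \cite{Ko1}.
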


Ko\c{c}ak \cite{Ko1} also constructed a graded algebra with quadratic defining relations and intermediate growth of the coefficients of its Hilbert series.

\begin{theorem}\label{quadratic algebra}
Let the Lie algebra $L$ be generated by two elements $x_1$ and $x_2$ of first degree with defining relations
\[
[[[x_1,x_2],x_2],x_2]=[[[x_2,x_1],x_1],x_1]=0,
\]
and let $U(L)=K\oplus U(L)_1\oplus U(L)_2\oplus \cdots$ be its universal enveloping algebra.
Then the coefficients of the Hilbert series of the algebra $R$ are of intermediate growth, where $R$ is generated by
the homogeneous component $U(L)_4$ of degree $4$, and $R$ is a quadratic algebra with $14$ generators and $96$ quadratic relations.
Its growth function $g(n)$ satisfies $g(n)\sim \exp(\sqrt{n})$.
\end{theorem}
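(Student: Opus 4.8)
The plan is to identify $L$ with a classical Lie algebra, to reduce every assertion to the Hilbert series of $U(L)$ through the Poincar\'e--Birkhoff--Witt theorem, and then to invoke Theorem \ref{example of Martha Smith}, a Veronese theorem of Backelin, and the classical asymptotics of $\eta$-type products. I would begin by pinning down the graded structure of $L$. The defining relations $[[[x_1,x_2],x_2],x_2]=0$ and $[[[x_2,x_1],x_1],x_1]=0$ are, up to sign, the Serre relations $(\mathrm{ad}\,x_2)^{3}(x_1)=0$ and $(\mathrm{ad}\,x_1)^{3}(x_2)=0$ attached to the generalized Cartan matrix $\left(\begin{smallmatrix}2&-2\\-2&2\end{smallmatrix}\right)$ of the affine Kac--Moody algebra of type $A_1^{(1)}$; hence, by the Gabber--Kac theorem, $L$ is the positive part $\mathfrak{n}_+$ of $\widehat{\mathfrak{sl}_2}$. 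In particular $L$ is infinite dimensional, and the root-space decomposition (real roots have multiplicity $1$, the imaginary roots $k\delta$ have multiplicity $1$) gives, for the grading $\deg x_1=\deg x_2=1$,
\[
\dim L_n=\begin{cases}2,&n\text{ odd},\\1,&n\text{ even},\end{cases}\qquad H(L,t)=\frac{2t+t^{2}}{1-t^{2}},
\]
so by the PBW theorem
\[
H(U(L),t)=\prod_{k\ge1}\frac{1}{(1-t^{2k-1})^{2}(1-t^{2k})}.
\]
(These dimensions can also be computed by a direct Jacobi-identity induction, but the Kac--Moody picture is cleaner and explains the product.)

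Next I would treat the growth. Because $L$ is infinite dimensional with bounded, hence subexponential, coefficients of $H(L,t)$, Theorem \ref{example of Martha Smith}(i) gives that the coefficients of $H(U(L),t)$ are of intermediate growth. The product displayed above is an $\eta$-type product whose exponents $\dim L_n$ are bounded with mean value $3/2$; Meinardus's theorem (or a direct circle-method estimate) then yields $\dim U(L)_n=\exp\bigl((C+o(1))\sqrt{n}\bigr)$ for some $C>0$. Since $R$ is, after dividing all degrees by $4$, the fourth Veronese subalgebra $U(L)^{(4)}$ (established in the next step), $\dim R_k=\dim U(L)_{4k}=\exp\bigl((2C+o(1))\sqrt{k}\bigr)$, which is again of intermediate growth, and from $\dim R_n\le g(n)\le(n+1)\dim R_n$ we get $g(n)=\exp\bigl((2C+o(1))\sqrt{n}\bigr)$. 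Under the equivalence $\sim$ of Section \ref{section growth and Hilbert series} — which permits replacing $n$ by $pn$ and multiplying by a constant, hence absorbs the factor $2C$ and polynomial factors — every function $\exp(c\sqrt{n})$ with $c>0$ is equivalent to $\exp(\sqrt{n})$, so $g(n)\sim\exp(\sqrt{n})$.

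It remains to establish the presentation. Since $L$ is generated in degree $1$ by $x_1,x_2$, so is $U(L)$, and $U(L)=K\langle x_1,x_2\rangle/(u,v)$ with $u=[[[x_1,x_2],x_2],x_2]$ and $v=[[[x_2,x_1],x_1],x_1]$ homogeneous of degree $4$; moreover $U(L)$ has no relations of degree below $4$, since $L$ agrees with the free Lie algebra in degrees $\le3$. By a theorem of Backelin on Veronese subalgebras — a connected graded algebra generated in degree $1$ whose defining ideal is generated in degrees $\le d$ has its $d$-th Veronese subalgebra Koszul, in particular quadratic and generated in degree $1$ — the algebra $R=U(L)^{(4)}$, generated by $U(L)_4$, is defined by quadratic relations. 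The two numerical invariants are read off from $H(U(L),t)$: expanding the product through order $8$ gives $\dim U(L)_4=14$ and $\dim U(L)_8=100$, so $R$ has $\dim R_1=14$ generators of degree $1$ (and no linear relations), while its space of quadratic relations has dimension $\dim\bigl(U(L)_4\otimes U(L)_4\bigr)-\dim U(L)_8=14^{2}-100=96$.

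The single step that requires genuine work is the determination of the graded dimensions of $L$, equivalently the low-degree coefficients of $H(U(L),t)$; once $L$ is recognized as $\mathfrak{n}_+(A_1^{(1)})$ this is immediate from the root multiplicities, and without that recognition it is a finite but not altogether trivial Jacobi-identity computation, which I expect to be the main obstacle. Everything else is a formal consequence of PBW, Theorem \ref{example of Martha Smith}, Backelin's Veronese theorem, and the asymptotics of $\eta$-type products.
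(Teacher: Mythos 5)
Your argument is essentially correct, but note that the survey gives no proof of this statement: it is quoted from Ko\c{c}ak \cite{Ko1}, and the only hint the paper supplies is the remark following the theorem identifying $L$ with the Lie algebra of $2\times 2$ matrices over $K[z]$ generated by $e_{12}$ and $ze_{21}$ --- which is exactly the loop-algebra realization of your $\mathfrak{n}_+$ of $A_1^{(1)}$. So your reconstruction (graded dimensions $2,1,2,1,\dots$ of $L$, PBW, Theorem \ref{example of Martha Smith}(i) for intermediate growth, identification of $R$ with the fourth Veronese subalgebra of $U(L)$, quadraticity of that Veronese, and the counts $\dim U(L)_4=14$, $\dim U(L)_8=100$, hence $14^2-100=96$ relations) follows the route of the original source; I checked the two dimension counts and they are correct, and the passage from $\dim R_k$ to $g(n)\sim\exp(\sqrt{n})$ under the equivalence of Section \ref{section growth and Hilbert series} is fine.

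Two caveats. First, the result you attribute to Backelin (Koszulity of the $d$-th Veronese) is a theorem about commutative rings; what you actually need, and what is true in the noncommutative setting, is the weaker standard fact that if a degree-one-generated algebra has defining relations in degrees $\le d+1$, then its $d$-th Veronese subalgebra is quadratic (Backelin--Fr\"oberg, or Polishchuk--Positselski's book on quadratic algebras); since $U(L)=K\langle x_1,x_2\rangle/(u,v)$ has relations in degree $4\le 5$, the $4$-th Veronese is quadratic, and Koszulity is not needed. Second, the Gabber--Kac identification requires characteristic $0$ (and already the loop-algebra model fails to be generated by $x_1,x_2$ in characteristic $2$), so you should either assume $\operatorname{char}K=0$ or verify the graded dimensions of $L$ directly from the matrix realization; also, Meinardus is heavier than necessary --- the coefficientwise bound $\dim U(L)_n\ge p_n$ together with the Hardy--Ramanujan asymptotics quoted in Section \ref{section growth and Hilbert series} already gives the superpolynomial lower bound, and Theorem \ref{example of Martha Smith}(i) gives subexponentiality.
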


The Lie algebra $L$ in Theorem \ref{quadratic algebra} is isomorphic to the Lie algebra of $2\times 2$ matrices with coefficients from $K[z]$
generated by
\[
x_1=\left(\begin{matrix}0&1\\
0&0\\
\end{matrix}\right)\text{ and }
x_2=\left(\begin{matrix}0&0\\
z&0\\
\end{matrix}\right).
\]
The series
\[
\prod_{n\geq 1}\frac{1}{1-t^n}=\sum_{n\geq 0}p_nt^n
\]
and
\[
\prod_{n\geq 1}(1+t^n)=\sum_{n\geq 0}\rho_nt^n
\]
play very special r\^oles in combinatorics: $p_n$ is equal to the number of partitions of $n$
and $\rho_n$ is the number of partitions of $n$ in different parts.
Recall that $\lambda=(\lambda_1,\ldots,\lambda_k)$ is a partition of $n$, if the parts $\lambda_i$ are integers such that
$\lambda_1+\cdots+\lambda_k=n$ and $\lambda_1\geq\cdots\geq\lambda_k\geq 0$; for $\rho_n$ we assume that
$\lambda_1>\cdots>\lambda_k\geq 0$. The asymptotics of $p_n$ and $\rho_n$ was found by
Hardy and Ramanujan \cite{HR} in 1918 and independently by Uspensky \cite{Us} in 1920:
\[
p_n\approx \frac{1}{4n\sqrt{3}}\exp\left(\pi\sqrt{\frac{2}{3}n}\right),\quad
\rho_n\approx \frac{1}{4\sqrt[4]{3n^3}}\exp\left(\pi\sqrt{\frac{1}{3}n}\right).
\]

See also the recent paper by Ko\c{c}ak \cite{Ko2} for more examples and a survey on finitely presented algebras of intermediate growth.

For further reading, including theory of Gr\"obner bases and other combinatorial properties of algebras we refer
e.g. Herzog and Hibi \cite{HH} for commutative algebras
and Ufnarovskij \cite{U3} and Belov, Borisenko, Latyshev \cite{BBL} for noncommutative algebras.

\section{Algebras with prescribed Hilbert series}\label{section prescribed Hilbert series}

In this section we shall discuss the following problem.

\begin{problem}\label{existence of algebras with given Hilbert series}
Given a power series
\[
a(t)=\sum_{n\geq 0}a_nt^n,\quad a_n\in{\mathbb N}_0,
\]
does there exist a finitely generated graded algebra $R$ with Hilbert series equal to $a(t)$
or at least very close to $a(t)$?
\end{problem}

We shall recall the construction of Borho and Kraft \cite{BK} of a finitely generated graded algebra
with Gelfand-Kirillov dimension equal to $\beta\in [2,\infty)$. If $R$ is a finitely generated graded algebra with
$\text{GKdim}(R)=\alpha\in [2,3)$ and $m\in {\mathbb N}$, then the tensor product $K[y_1,\ldots,t_m]\otimes_KR$
is of Gelfand-Kirillov dimension $\alpha+m$. Hence for the construction of an algebra $R$ with $\text{GKdim}(R)\in[2,\infty)$
it is sufficient to handle the case $\text{GKdim}(R)=\alpha\in [2,3)$.
Let $S\subset {\mathbb N}_0$ be a set of nonnegative integers and let
\[
a(t)=\sum_{s\in S}t^s.
\]
We shall construct a two-generated monomial algebra $R$ with Hilbert series
\[
H(R,t)=\frac{1}{1-t}+\frac{t}{(1-t)^2}+\frac{a(t)t^2}{(1-t)^2}.
\]
We fix the set $U\subset \langle x,y\rangle$
\[
U=\{yx^iyx^jy,yx^ky\mid i,j\geq 0,k\in{\mathbb N}_0\setminus S\}.
\]
Then the factor algebra $R=K\langle x,y\rangle/(U)$ of the free algebra $K\langle x,y\rangle$
modulo the ideal generated by $U$ has a basis
\[
\{x^i,x^iyx^j,x^iyx^syx^j\mid i,j\geq 0,s\in S\}
\]
and hence $R$ has the desired Hilbert series.

Pay attention that in the above example the cube $(y)^3$ of the ideal $(y)$ generated by $y$ is equal to zero in $R$.
A similar construction of a two-generated monomial algebra $R$ is given in \cite[Theorem 9.4.11]{D1}.
Assuming that $(y)^k=0$ in $R$, we construct a two-generated monomial algebra $R$ with Hilbert series
\[
H(R,t)=\sum_{i=0}^{k-1}\frac{t^i}{(1-t)^{i+1}}+\frac{a(t)t^k}{(1-t)^k},
\]
A similar approach was used in the recent paper \cite{D2}:

\begin{theorem}\label{main theorem}
Let
\[
a(t)=\sum_{n\geq 0}a_nt^n
\]
be a power series with nonnegative integer coefficients.

{\rm (i)} If $d$ is a positive integer such that $a_n\leq d^n$, $n=0,1,2,\ldots$,
then for any integer $p=0,1,2$, there exists a $(d+1)$-generated monomial algebra $R$ such that its Hilbert series is
\[
H(R,t)=\frac{1}{1-dt}+\frac{t}{(1-dt)^2}+\frac{t^2a(t)}{(1-dt)^p}.
\]

{\rm (ii)} If $\displaystyle a_n\leq \binom{d+n-1}{n-1}$, $n=0,1,2,\ldots$, for some positive integer $d$, then
for any integer $p=0,1,2$, there exists a $(d+1)$-generated graded algebra $R$ such that its Hilbert series is
\[
H(R,t)=\frac{1}{(1-t)^d}+\frac{t}{(1-t)^{2d}}+\frac{t^2a(t)}{(1-t)^{dp}}.
\]
\end{theorem}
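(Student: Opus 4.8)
The plan is to imitate the Borho--Kraft construction recalled above, but with $d+1$ generators and with each block between two occurrences of $y$ carrying one of $a_n$ distinct ``labels'' instead of merely recording membership in a set $S$. For part (i) I would set $A=K\langle x_1,\dots,x_d\rangle$, so that $H(A,t)=1/(1-dt)$, adjoin one more generator $y$ of degree $1$, and — using $a_n\le d^n$ — fix for each $n$ a set $W_n$ of exactly $a_n$ pairwise distinct monomials of degree $n$ in $A$. In the free algebra $K\langle x_1,\dots,x_d,y\rangle$ consider the three families of monomials
\[
\mathcal F_1=\{\,w:w\in A\,\},\qquad
\mathcal F_2=\{\,u\,y\,v:u,v\in A\,\},\qquad
\mathcal F_3=\{\,u\,y\,w\,y\,v:w\in\textstyle\bigcup_{n}W_n,\ u,v\in A\,\},
\]
where in $\mathcal F_3$ we also demand $v=1$ when $p\le 1$ and $u=1$ when $p\le 0$ (here $A$ is identified with the set of monomials in the $x_i$). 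Put $R=K\langle x_1,\dots,x_d,y\rangle/(U)$, where $U$ is the set of all monomials lying in none of $\mathcal F_1,\mathcal F_2,\mathcal F_3$. One then checks that every factor of an $\mathcal F_i$-monomial is again an $\mathcal F_j$-monomial, so a monomial is divisible by no element of $U$ exactly when it belongs to $\mathcal F_1\cup\mathcal F_2\cup\mathcal F_3$; hence these monomials form a $K$-basis of the monomial algebra $R$. Counting by degree, $\mathcal F_1$ contributes $H(A,t)$, $\mathcal F_2$ contributes $t\,H(A,t)^2$, and $\mathcal F_3$ contributes $t^2 a(t)\,H(A,t)^{\,p}$, because a monomial $u\,y\,w\,y\,v$ with $w\in W_n$ has degree $n+2+\deg u+\deg v$ and, for fixed $n$, the admissible pairs $(u,v)$ have generating function $H(A,t)^p$. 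Substituting $H(A,t)=1/(1-dt)$ gives the asserted series.

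For part (ii) I would run the same argument with $A=K[x_1,\dots,x_d]$, so that $H(A,t)=1/(1-t)^d$: thus $R$ is generated by $x_1,\dots,x_d,y$, with the commutators $x_ix_j-x_jx_i$ ($i<j$) among the defining relations alongside the monomial relations killing everything outside the three families. Two adjustments are needed. First, the presentation is no longer monomial, so ``monomials avoiding $U$'' must be replaced by the normal words of a reduction system; fixing a degree-lexicographic order with $x_1<\dots<x_d<y$ one has to verify that the commutators together with the block-killing monomials form a Gröbner basis (all overlap ambiguities resolve), after which the normal words are precisely the three families with the $x$-parts written in sorted form. Second, there are only $\binom{n+d-1}{d-1}$ sorted monomials of degree $n$ in $x_1,\dots,x_d$, which may be fewer than $a_n$; instead one draws the labels $W_n$ from the $\binom{d+n-1}{n-1}$ sorted monomials of degree $n-1$ in the full alphabet $x_1,\dots,x_d,y$ (equivalently, of degree $\le n-1$ in $x_1,\dots,x_d$) — exactly what $a_n\le\binom{d+n-1}{n-1}$ permits — compensating for the lost degree by an extra $y$ so that such a block still occupies degree $n$ between its two delimiting $y$'s. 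The degree count then proceeds as in part (i), now with $H(A,t)=1/(1-t)^d$, and yields $1/(1-t)^d+t/(1-t)^{2d}+t^2 a(t)/(1-t)^{dp}$.

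The hard part will be the verification that the chosen relations carve out exactly the three intended families of normal words. In part (i) this is elementary but fussy: one must confirm that no admissible monomial is a multiple of a forbidden one, handling separately the factors that cut through one or through both copies of $y$, so that $\mathcal F_1\cup\mathcal F_2\cup\mathcal F_3$ really is the set of normal monomials. In part (ii) it is the genuinely non-monomial confluence check for the commutators-plus-monomials presentation, to be carried out in tandem with the combinatorial bookkeeping that there are exactly $\binom{d+n-1}{n-1}$ labels of the correct degree and that distinct labels yield distinct normal words.
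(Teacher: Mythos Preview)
Your part (i) is correct and is exactly the Borho--Kraft-type construction the paper sketches just before stating the theorem: replace the single $x$ by $d$ free generators, use $a_n\le d^n$ to select $W_n$ among the $d^n$ words of length $n$, and check that $\mathcal F_1\cup\mathcal F_2\cup\mathcal F_3$ is closed under subwords so that it is the monomial basis of $K\langle X_d,y\rangle/(U)$. The three cases $p=0,1,2$ are handled by trimming $u$ and $v$ as you describe.

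Part (ii), however, has a genuine gap. Your proposed label pool---``sorted monomials of degree $n-1$ in $x_1,\dots,x_d,y$, padded by one extra $y$''---amounts to blocks $m\,y^{j}$ with $m$ a sorted $x$-monomial and $j\ge 1$, so that an $\mathcal F_3$-word (say for $p=2$) reads $u\,y\,m\,y^{\,j+1}\,v$. Now take the subword $u\,y\,m\,y$ obtained by cutting after the first $y$ of the $y^{j+1}$-block. This word has two $y$'s, so it is not in $\mathcal F_1$ or $\mathcal F_2$; and to lie in $\mathcal F_3$ it would require the label $(m,0)$, which you have excluded. Hence $\mathcal F_1\cup\mathcal F_2\cup\mathcal F_3$ is \emph{not} closed under subwords, so it is not the normal-word basis of your quotient and the Hilbert-series count never starts.

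If you try to repair this by allowing $j\ge 0$, subword-closure forces a down-set condition: whenever $(m,j)$ is chosen, so must be $(m,j')$ for all $0\le j'\le j$ (from right cuts inside the $y$-block) and $(1,j'')$ for $0\le j''\le j-1$ (from left cuts through $m$). These constraints couple the sets $W_n$ across different $n$, and they are \emph{not} compatible with arbitrary sequences satisfying $a_n\le\binom{d+n-1}{n-1}$. For a concrete obstruction take $d=1$, $a_1=0$, $a_2=2$ (allowed, since $\binom{n}{n-1}=n$): any degree-$2$ label other than $(x^2,0)$ forces a degree-$1$ label, so $|W_2|\le 1$. Thus the commutators-plus-monomials presentation you outline cannot realize the claimed Hilbert series in general; a different construction of the graded algebra in (ii) is needed, one that does not rely on a subword-closed monomial basis with $y$ appearing inside the label.
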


Under the assumptions of Theorem \ref{main theorem} (i) a modification of the proof gives
that for any nonnegative integers $p,q$, $p+q\leq 2$,
there exists a $(d+1)$-generated graded algebra $R$ such that its Hilbert series is
\[
H(R,t)=\frac{1}{1-dt}+\frac{t}{(1-dt)^2}+\frac{t^2a(t)}{(1-dt)^p(1-t)^{dq}}.
\]
In the same way we can construct a monomial algebra $R$ with Hilbert series
\[
H(R,t)=\frac{1+2t}{1-dt}-t+t^2a(t).
\]
In all these constructions it is clear that if the power series $a(t)$ is rational, algebraic or transcendental,
the same property has the Hilbert series of the algebra $R$.

\section{PI-algebras}\label{section PI-algebras}

Let $R$ be an algebra  and let $f(x_1,\ldots,x_n)\in K\langle X\rangle=K\langle x_1,x_2,\ldots\rangle$.
We say that $f(x_1,\ldots,x_n)$ is a {\it polynomial identity} for the algebra $R$ if
$f(r_1,\ldots,r_n)=0$ for all $r_1,\ldots,r_n\in R$. If $R$ satisfies a nontrivial polynomial identity
it is called a {\it PI-algebra}.

The study of PI-algebras is an important part of ring theory with a rich structural and combinatorial theory.
PI-algebras form a reasonably big class containing the finite dimensional and the commutative algebras
and enjoying many of their properties.
In this section we shall discuss only the growth and the Hilbert series of finitely generated PI-algebras.
For more details we refer to the survey article \cite{D4}.

One of the main combinatorial theorems for finitely generated PI-algebras is the {\it Shirshov Height Theorem} \cite{S}.

\begin{theorem}\label{Shirshov theorem}
Let $R$ be a PI-algebra generated by $d$ elements $r_1,\ldots,r_d$ and satisfying a polynomial identity of degree $k$.
Then there exists a positive integer $h=h(d,k)$ such that as a vector space $R$ is spanned on the products
$u_1^{n_1}\cdots u_h^{n_h}$, $n_i\geq 0$, $i=1,\ldots,h$, and every $u_i$ is of the form $u_i=r_{j_1}\cdots r_{j_p}$
with $p\leq k-1$.
\end{theorem}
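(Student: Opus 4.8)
The plan is to translate the statement into combinatorics of words over the alphabet $X_d=\{x_1,\dots,x_d\}$ and then run Shirshov's rewriting procedure. Since $R$ is generated by $r_1,\dots,r_d$, it is spanned by the images of all words $x_{i_1}\cdots x_{i_n}$, so it suffices to show that every such word is, modulo the defining relations of $R$, a $K$-linear combination of words of the form $u_1^{n_1}\cdots u_h^{n_h}$ with each $u_j$ a word of length $\le k-1$ and $h=h(d,k)$ independent of the word. We may assume the identity is multilinear of degree $k$ (a multilinear consequence has degree $\le k$, and the theorem for the smaller degree implies the general case). Fixing a degree-lexicographic order on words with $x_1>x_2>\dots>x_d$ and relabelling the variables so that one monomial with nonzero coefficient becomes $x_1x_2\cdots x_k$, which is then automatically the lexicographically largest monomial occurring, the identity, divided by its leading coefficient, reads
\[
x_1x_2\cdots x_k=\sum_{\sigma\neq\mathrm{id}}\beta_\sigma\,x_{\sigma(1)}\cdots x_{\sigma(k)},
\]
where every monomial on the right is lexicographically smaller than $x_1\cdots x_k$.

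Call a word $k$-\emph{decomposable} if it contains a subword $v_1v_2\cdots v_k$ with $|v_1|=|v_2|=\dots=|v_k|$ and $v_1>v_2>\dots>v_k$. Substituting $x_j\mapsto v_j$ in the identity rewrites $v_1\cdots v_k$, and hence any word containing it, as a $K$-linear combination of words of the same length; because the $v_j$ all have the same length, each rearrangement $v_{\sigma(1)}\cdots v_{\sigma(k)}$ with $\sigma\neq\mathrm{id}$ is lexicographically strictly smaller than $v_1\cdots v_k$. Since for each fixed length the set of words is finite and linearly ordered, iterating this rewriting terminates, and every homogeneous component of $R$ is spanned by the images of the $k$-\emph{indecomposable} words.

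What remains is purely combinatorial and is the heart of the matter: Shirshov's combinatorial lemma asserts that for every $\ell$ there is a bound $N=N(d,k,\ell)$ such that every $k$-indecomposable word of length $\ge N$ contains a subword $u^\ell$ with $1\le|u|\le k-1$; it is proved by induction on $k$ through an iterated pigeonhole argument on the positions and on the left factors of length $<k$ occurring in the word. Feeding this lemma back in, one argues that every $k$-indecomposable word has the form $z_0u_1^{m_1}z_1u_2^{m_2}\cdots u_s^{m_s}z_s$, where each period satisfies $|u_j|\le k-1$, each gap satisfies $|z_i|<N(d,k,\ell)$, and, crucially, $s$ is bounded in terms of $d$ and $k$ alone, because a run of too many consecutive distinct long periodic blocks would itself contain a $k$-decomposable subword while there are only $d+d^2+\dots+d^{k-1}$ candidate periods of length $\le k-1$. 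Breaking the finitely many bounded gaps and bounded-exponent blocks into single generators and collecting everything into a fixed number of slots produces the desired presentation $u_1^{n_1}\cdots u_h^{n_h}$ with $h=h(d,k)$. I expect this final bookkeeping --- extracting a clean uniform bound on the number of slots from the combinatorial lemma --- to be the main obstacle; by contrast the multilinearization and the termination of the rewriting step are routine.
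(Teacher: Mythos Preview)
The paper does not prove this theorem --- it is quoted from Shirshov \cite{S} and then used as a black box to derive the subsequent corollary on polynomial growth. So there is no ``paper's own proof'' to compare against; full proofs appear in the references, for instance in the author's book \cite{D1} or in \cite{DF, KBR}.

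Your outline is exactly the standard one: multilinearize, rewrite $k$-decomposable words into lexicographically smaller ones until only $k$-indecomposable words remain, then invoke Shirshov's combinatorial lemma on words to bound the structure of the survivors. The one point I would flag is your definition of $k$-decomposable. Requiring $|v_1|=\cdots=|v_k|$ makes the rewriting step transparent, but it is not how Shirshov's combinatorial lemma is usually stated or proved: in the standard versions the $v_i$ may have different lengths, and one compensates by choosing an order on words of unequal length (typically one in which a proper prefix counts as \emph{larger}) so that the rearrangement inequality $v_1\cdots v_k>v_{\sigma(1)}\cdots v_{\sigma(k)}$ still holds. With your equal-length convention the class of $k$-indecomposable words is strictly larger than in the standard setup, so the combinatorial lemma you are invoking is formally \emph{stronger} than the classical one, and your one-line sketch (``induction on $k$ through an iterated pigeonhole'') does not obviously establish it. You should either prove this stronger equal-length version directly, or switch to the standard definition and check that the rewriting step still strictly decreases the chosen order --- both routes work, but the second is what you will find in the literature. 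The final bookkeeping you single out is indeed where the real work lies; it is carried out carefully in the sources cited above.
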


The integer $h$ is called the {\it height} of $R$.

\begin{corollary}\label{polynomial growth for PI-algebras}
Let $R$ be a $d$-generated PI-algebra satisfying a polynomial identity of degree $k$. Then
the growth function of $R$ is bounded by a polynomial of degree $h$ where $h=h(d,k)$ is the height in the theorem of Shirshov.
\end{corollary}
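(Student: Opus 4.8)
The plan is to read the bound off directly from the Shirshov Height Theorem (Theorem \ref{Shirshov theorem}). Let $V$ be the $d$-dimensional subspace of $R$ spanned by the generators $r_1,\ldots,r_d$, so that the growth function in question is $g_V(n)=\dim(R^n)$ with $R^n=V^0+V^1+\cdots+V^n$. The space $R^n$ is spanned by all products $r_{i_1}\cdots r_{i_m}$ of at most $n$ generators; by Theorem \ref{Shirshov theorem} — which we apply, after replacing the given identity by a multilinear consequence of degree at most $k$ if necessary so that passage to Shirshov normal form preserves the number of generator-factors of a monomial — the subspace $R^n$ is already spanned by those normal-form products $u_1^{n_1}\cdots u_h^{n_h}$ ($n_i\geq 0$, each $u_i$ a word in the $r_j$ of length at most $k-1$) whose total length does not exceed $n$. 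Here $h=h(d,k)$ is the height from Shirshov's theorem, and it only remains to count these products.

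Next I would count the admissible blocks $u_i$. We may assume that each $u_i$ is a nonempty word of length between $1$ and $k-1$, since an empty $u_i$ contributes only the factor $1$ and can be deleted (this only lowers the number of blocks below $h$). Writing $\ell$ for the length in generators, the number of words over $\{r_1,\ldots,r_d\}$ with $1\leq\ell\leq k-1$ is at most $N:=d+d^2+\cdots+d^{k-1}$, which depends only on $d$ and $k$. A normal-form product $u_1^{n_1}\cdots u_h^{n_h}$ is therefore determined by an ordered choice of the blocks $u_1,\ldots,u_h$ (at most $N^h$ possibilities) together with an exponent vector $(n_1,\ldots,n_h)\in{\mathbb N}_0^h$; and since every block has length at least $1$, the total length $\sum_{i=1}^h n_i\ell(u_i)$ is at least $n_1+\cdots+n_h$, so only the exponent vectors with $n_1+\cdots+n_h\leq n$ can contribute to $R^n$.

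Finally I would apply the elementary identity $\#\{(n_1,\ldots,n_h)\in{\mathbb N}_0^h : n_1+\cdots+n_h\leq n\}=\binom{n+h}{h}$ to obtain
\[
g_V(n)=\dim(R^n)\leq N^h\binom{n+h}{h}=\frac{N^h}{h!}\,(n+h)(n+h-1)\cdots(n+1),
\]
a polynomial in $n$ of degree $h=h(d,k)$. This is the asserted bound, and since $g_V\sim g_W$ for any two finite-dimensional generating subspaces, the conclusion does not depend on the chosen generators up to the equivalence $\sim$ introduced in Section \ref{section growth and Hilbert series}.

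I do not expect a deep obstacle, as the corollary is essentially bookkeeping on top of Theorem \ref{Shirshov theorem}. Two small points deserve attention. First, one must ensure that the passage to Shirshov normal form does not change the degree of a monomial; this is automatic once the polynomial identity is taken to be multilinear, and every identity of degree $k$ has a multilinear consequence of degree at most $k$. Second, the fact that each block $u_i$ has length at least $1$ is precisely what turns the bound $h$ on the number of blocks into the bound $n$ on the sum of the exponents, keeping the count polynomial in $n$ of degree exactly $h$; the finitely many choices of blocks only contribute the constant factor $N^h$ and do not affect the degree.
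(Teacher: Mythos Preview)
Your proof is correct and follows essentially the same approach as the paper: both apply Shirshov's Height Theorem, count the $N=d+d^2+\cdots+d^{k-1}$ possible blocks $u_i$, and observe that the growth is bounded as in a finite sum of polynomial algebras in $h$ variables. You spell out the binomial count $N^h\binom{n+h}{h}$ and the degree-preservation point about multilinearization more explicitly than the paper, which simply says that $R$ ``behaves as a finite sum of polynomial algebras $K[u_{i_1},\ldots,u_{i_h}]$'', but the substance is the same.
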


\begin{proof}
Let the algebra $R$ be generated by $r_1,\ldots,r_d$. Then the number of all words $u=r_{j_1}\cdots r_{j_p}$ of length $p$ is equal to $d^p$.
Hence all words of length $\leq k-1$ are $1+d+d^2+\cdots+d^{k-1}$. If we extend the generating set of $R$ to the set of all words of length $\leq k-1$,
Theorem \ref{Shirshov theorem} implies that as a vector space $R$ behaves
as a finite sum of polynomial algebras $K[u_{i_1},\ldots,u_{i_h}]$. Hence the growth function of $R$ is bounded by a polynomial of degree $h$.
\end{proof}

As an immediate consequence we obtain the following theorem of Berele \cite{B}.

\begin{theorem}\label{GKdim of PI-algebras is finite}
Every finitely generated PI-algebra $R$ is of finite Gelfand-Kirillov dimension.
If $R$ is $d$-generated and satisfies a polynomial identity of degree $k$, then $\text{\rm GKdim}(R)\leq h$,
where $h=h(d,k)$ is the height in the Shirshov Height Theorem.
\end{theorem}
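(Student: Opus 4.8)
The plan is to deduce the theorem directly from Corollary \ref{polynomial growth for PI-algebras} together with the definition of the Gelfand-Kirillov dimension as a limit superior of logarithms of the growth function. Since Corollary \ref{polynomial growth for PI-algebras} already packages all the combinatorial content coming from the Shirshov Height Theorem, the remaining argument is a short estimate; there is no genuine obstacle beyond keeping track of constants, and the only step that needs any care is making sure the polynomial bound is used with respect to the given generating set, so that no extra inflation of the exponent sneaks in.

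First I would fix the generators. Let $R$ be generated by $r_1,\ldots,r_d$ and let $V$ be the finite dimensional span of $r_1,\ldots,r_d$, so that $g_V(n)=\dim(R^n)$ with $R^n=V^0+V^1+\cdots+V^n$. Applying Corollary \ref{polynomial growth for PI-algebras} to this generating set, we obtain a constant $C>0$ with
\[
g_V(n)\leq C n^h\quad\text{for all sufficiently large }n,
\]
where $h=h(d,k)$ is the height furnished by the Shirshov Height Theorem for a $d$-generated algebra satisfying a polynomial identity of degree $k$.

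Next I would substitute this bound into the defining formula for the Gelfand-Kirillov dimension. For all large $n$,
\[
\log_n\bigl(g_V(n)\bigr)\leq \log_n\bigl(C n^h\bigr)=h+\frac{\log C}{\log n},
\]
and the right-hand side tends to $h$ as $n\to\infty$. Hence
\[
\text{GKdim}(R)=\limsup_{n\to\infty}\log_n\bigl(g_V(n)\bigr)\leq h<\infty .
\]
Since $\text{GKdim}(R)$ does not depend on the chosen system of generators, this single computation establishes both assertions: the Gelfand-Kirillov dimension of $R$ is finite and bounded above by the Shirshov height $h=h(d,k)$. I expect the proof in the paper to be essentially this, the entire difficulty having been absorbed into the Shirshov Height Theorem and its corollary, which are already available to us.
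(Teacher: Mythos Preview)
Your proposal is correct and matches the paper's approach: the paper presents Theorem \ref{GKdim of PI-algebras is finite} explicitly as ``an immediate consequence'' of Corollary \ref{polynomial growth for PI-algebras}, without even writing out the routine $\limsup$ estimate you supply. Your version is simply a slightly more detailed rendering of the same one-line deduction.
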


The original estimate for the height $h$ in terms of the number of generators $d$ of $R$
and the degree $k$ of the satisfied polynomial identity can be derived from a lemma of Shirshov on combinatorics of words.
There are many attempts to improve the estimates for $h$ and to decrease the length $p\leq k-1$ of the words
$u_i=r_{j_1}\cdots r_{j_p}$ in the Shirshov Height Theorem \ref{Shirshov theorem}.
Shestakov conjectured (see the abstract of the talk of Lvov \cite{Lv}) that the bound $k-1$ for length can be reduced to
$\lceil k/2\rceil$, where , as usually, $\lceil \alpha\rceil$, $\alpha\in \mathbb R$, is the integer part of $\alpha$.
Lvov added some additional arguments which replace $\lceil k/2\rceil$ with the PI-degree $\text{PIdeg}(R)$ of $R$ in the conjecture of Shestakov.
Recall that a PI-algebra $R$ is of {\it PI-degree} $c$ (or of {\it complexity} $c$),
if $c$ is the largest integer such that all
multilinear polynomial identities of $R$ follow from the
multilinear identities of the $c\times c$ matrix algebra $M_c(K)$.
The conjecture of Shestakov was confirmed by
Ufnarovskij \cite{U2}, Belov \cite{Be} and Chekanu \cite{Ch}.
Other proofs are given in the survey article by Belov, Borisenko and Latyshev \cite{BBL}
and in the book by the author and Formanek \cite{DF}.
Concerning the height $h$ the original proof of Shirshov \cite{S} gives primitive recursive estimates.
Later it was shown that $h$ is exponentially bounded
in terms of the number of generators $d$ of the algebra $R$ and the degree $k$ of the polynomial identity,
see the references in the paper by Belov and Kharitonov \cite{BeK}.
In the same paper Belov and Kharitonov found a subexponential bound for $h$: For a fixed $d$ and $k$ sufficiently large
\[
h<k^{12(1+o(1))\log_3k}.
\]

Theorems \ref{Shirshov theorem} and \ref{GKdim of PI-algebras is finite} confirm
that from many points of view finitely generated PI-algebras are similar to commutative algebras.
There are also essential differences. The Gelfand-Kirillov dimension of a finitely generated commutative algebra is an integer.
The discussed in Section \ref{section prescribed Hilbert series} examples
of two-generated PI-algebras $R$ of Gelfand-Kirillov dimension $\alpha\in [2,3)$
and the tensor products $K[y_1,\ldots,t_m]\otimes_KR$ from \cite{BK}
satisfy the polynomial identity
\[
(x_1x_2-x_2x_1)(x_3x_4-x_4x_3)(x_5x_6-x_6x_5)=0.
\]
The examples in \cite[Theorem 9.4.11]{D1} are two-generated and satisfy the polynomial identity
\[
(x_1x_2-x_2x_1)\cdots(x_{2m-1}x_{2m}-x_{2m}x_{2m-1})=0
\]
for a suitable $m$.
Another difference is that the Hilbert series of a finitely generated commutative graded algebra $R$ is rational
and for PI-algebras $R$ it may be also transcendental. In the next section we shall see that for graded PI-algebras
$H(R,t)$ cannot be algebraic and nonrational.

On the other hand, there is an important class of PI-algebras which play the same r\^ole as the polynomial algebras in commutative algebra
and the free associative algebras in the theory of associative algebras.

\begin{definition}
Let $I(R)\subset K\langle X\rangle$ be the ideal of all polynomial identities of the algebra $R$ (such ideals are called {\it T-ideals}).
The factor algebra
\[
F_d(\text{var}R)=K\langle X_d\rangle/(K\langle X_d\rangle\cap I(R))
\]
is called the {\it relatively free algebra of rank $d$ in the variety of algebras $\text{\rm var}R$ generated by} $R$.
\end{definition}

Kemer developed the structure theory of T-ideals in the free algebra $K\langle X\rangle$ over a field $K$ of characteristic 0
in the spirit of classical ideal theory in commutative algebras,
which allowed him to solve several outstanding open problems in the theory of PI-algebras, see \cite{Ke} for an account.
It is well known that over an infinite field $K$ all relatively free algebras are graded and it is a natural question to study their Hilbert series.
Using the results of Kemer, Belov \cite{Be1} established the following theorem which shows that
relatively free algebras share many nice properties typical for commutative algebra.

\begin{theorem}
Let $K$ be a field of characteristic $0$ and let $R$ be a PI-algebra.
Then the Hilbert series $H(F_d(\text{\rm var}R),t)$ is a rational function with denominator similar to the denominators
of the Hilbert series of finitely generated graded commutative algebras.
\end{theorem}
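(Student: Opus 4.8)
The plan is to reduce, by Kemer's structure theory, to a finite-dimensional algebra, to realise the relatively free algebra concretely by generic elements so that the (multi)grading is visible, and then to squeeze its Hilbert series between that of an associated trace ring and that of a correction term, both controlled by the Hilbert--Serre Theorem~\ref{Hilbert series of commutative algebras}. First, since $F_d(\mathrm{var}\,R)$ depends only on the identities of $R$ in at most $d$ variables, it is unchanged if we replace $R$ by $F_d(\mathrm{var}\,R)$, so we may assume $R$ is finitely generated. As $\mathrm{char}\,K=0$, Kemer's theory gives $\mathrm{var}\,R=\mathrm{var}\,A$ for a finite-dimensional algebra $A$ --- or, when $R$ satisfies no Capelli identity, the Grassmann envelope of a finite-dimensional superalgebra, which causes only cosmetic changes once the trace ring used below is replaced by Berele's $\mathbb{Z}_2$-graded trace ring. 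Passing to the algebraic closure of $K$ (which changes neither the grading nor the Hilbert series), fix a basis $a_1,\dots,a_m$ of $A$ and commuting indeterminates $\xi_i^{(k)}$ ($1\le k\le d$, $1\le i\le m$) of degree $1$; the standard realisation by generic elements identifies $F_d(\mathrm{var}\,A)$ with the multigraded subalgebra $G$ of $A\otimes_K K[\xi_i^{(k)}]$ generated by $y_k=\sum_i\xi_i^{(k)}a_i$, so $H(F_d(\mathrm{var}\,R),t)=H(G,t)$.

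Next I would adjoin to $G$ the characteristic coefficients of its elements in the sense of Procesi and Razmyslov --- the coefficients of characteristic polynomials when $A=M_n(K)$, the general case reducing to this --- obtaining a graded overalgebra $T\supseteq G$ inside $A\otimes_K K[\xi_i^{(k)}]$ together with a finitely generated graded \emph{commutative} subalgebra $C$ of its centre over which $T$ is a finitely generated module. This module-finiteness is the structural core of the argument; it rests on the Shirshov Height Theorem~\ref{Shirshov theorem}, the Procesi--Razmyslov theory of trace identities, and Braun's theorem on the nilpotency of the radical of a finitely generated PI-algebra. Granting it, Theorem~\ref{Hilbert series of commutative algebras} shows that $H(C,t)$, and then $H(T,t)$ as a finitely generated graded $C$-module, is a rational function with denominator a product of binomials $1-t^m$.

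The last and hardest step is the descent from $T$ to $G$. The exact sequence of graded vector spaces $0\to G\to T\to T/G\to 0$ gives $H(G,t)=H(T,t)-H(T/G,t)$, so it suffices to show $H(T/G,t)$ is again rational of this ``commutative'' type. The obstacle --- which I expect to dominate the proof --- is that $G=F_d(\mathrm{var}\,R)$ is \emph{not} a $C$-submodule of $T$; already for a single generic $2\times2$ matrix there is no nonzero ``conductor'' $c\in C$ with $cT\subseteq G$, so Theorem~\ref{Hilbert series of commutative algebras} does not apply to $T/G$ directly and Kemer's finer machinery becomes essential. I would filter $T/G$ using Kemer polynomials and the ``Phoenix property'' of $T$-ideals so that each successive subquotient is, up to a finitely generated graded module over a finitely generated graded commutative ring, the relatively free algebra of a variety of strictly smaller Kemer index, and then induct on that index (all the quantities involved being finite, in particular the Gelfand--Kirillov dimension of each subquotient by Theorem~\ref{GKdim of PI-algebras is finite}). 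Combining the three steps, $H(F_d(\mathrm{var}\,R),t)=H(G,t)=H(T,t)-H(T/G,t)$ is a rational function whose denominator is a product of binomials $1-t^m$, as claimed.

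A conceptually different route avoids the trace ring altogether: in characteristic $0$ the multigraded Hilbert series of $F_d(\mathrm{var}\,R)$ equals $\sum_\lambda m_\lambda(R)\,s_\lambda(t_1,\dots,t_d)$, where $\lambda$ runs over partitions with at most $d$ parts, $m_\lambda(R)$ are the cocharacter multiplicities of $R$, and $s_\lambda$ are Schur polynomials. One then uses that $\lambda\mapsto m_\lambda(R)$ is governed by a quasi-polynomial on a polyhedral subdivision (the Berele--Belov analysis of cocharacters), together with the polynomiality of $\dim W_\lambda=s_\lambda(1^{d})$ and Ehrhart-type summation, to conclude that the series is rational of commutative type; specialising $t_1=\dots=t_d=t$ recovers the one-variable statement.
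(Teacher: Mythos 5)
You should first be aware that the survey you are working from does not prove this statement at all: it is quoted as Belov's theorem \cite{Be1}, established ``using the results of Kemer'', so there is no in-paper argument to compare with. Judged on its own terms, your outline correctly reproduces the architecture of the known proof (reduction to a finitely generated algebra via $F_d(\text{var}\,F_d(\text{var}\,R))=F_d(\text{var}\,R)$, Kemer's representability of a finitely generated PI-algebra by a finite-dimensional algebra $A$ in characteristic $0$, realisation of $F_d(\text{var}\,A)$ by generic elements inside $A\otimes_K K[\xi]$, an auxiliary graded overring that is a finite module over a finitely generated commutative graded ring, and an induction on the Kemer index), but the decisive step is named rather than carried out. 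As you yourself note, $G=F_d(\text{var}\,A)$ is not a $C$-submodule of the overring; in fact for the overring one may simply take $T=A\otimes_K K[\xi]$ and $C=K[\xi]$, which is already a finite module over a central finitely generated commutative graded subalgebra, so the trace-ring construction by itself buys nothing. Everything hinges on producing a finite chain of T-ideals of $G$ (equivalently a filtration of $T/G$) whose factors are finitely generated graded modules over finitely generated graded commutative algebras, with the commutative algebras acting through characteristic-coefficient substitutions into the alternating folds of Kemer polynomials, and on proving finite generation of those factors. That construction is precisely the content of Belov's theorem (see the exposition in \cite{KBR}); your proposal points at the right toolbox (Kemer polynomials, the Phoenix property, induction on the Kemer index) but does not execute any of it, so what you have is a plan for a proof rather than a proof.

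The alternative route you sketch is also not available as stated: the assertion that $\lambda\mapsto m_\lambda(R)$ is piecewise quasi-polynomial is, in the literature, deduced \emph{from} Belov's rationality theorem (this is Berele's paper \cite{B1}, cited in the survey), so invoking it here is circular unless you supply an independent proof of that structure of the cocharacter multiplicities. Moreover, even granting such a description, rationality with denominator a product of binomials $1-t^m$ does not follow formally from summing specialised Schur functions against quasi-polynomial multiplicities without explicit control of the polyhedral regions and periods involved, which is again essentially the same hard structural input you would need in the first route.
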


\section{Algebraic and transcendental power series}\label{section power series}

The following partial case of a classical theorem of Fatou \cite{F} from 1906 shows that
the condition that a power series with nonnegative integer coefficients is algebraic is very restrictive.

\begin{theorem}\label{Theorem of Fatou}
If the coefficients of a power series are nonnegative integers and are bounded polynomially,
then the series is either rational or transcendental.
\end{theorem}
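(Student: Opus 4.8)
\emph{Proof proposal.} I would prove the equivalent statement: if $a(t)=\sum_{n\ge 0}a_nt^n$ with $a_n\in\mathbb N_0$ and $a_n\le Cn^c$ is algebraic over $\mathbb Q(t)$, then it is rational. One may assume $a(t)$ is not a polynomial; then $a_n\ge 1$ for infinitely many $n$, so $\limsup_n a_n^{1/n}=1$, $a(t)$ is holomorphic in $|t|<1$, its radius of convergence is exactly $1$, and its singularities (finitely many, since it is algebraic) all have modulus $\ge 1$.

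Being an algebraic function with rational coefficients, $a(t)$ is $\mathrm{Gal}(\overline{\mathbb Q}/\mathbb Q)$-invariant, so its finite singular set splits into a Galois-stable set of poles and a Galois-stable set of genuine algebraic branch points, with a logarithm-free Puiseux expansion at each. Let $R(t)$ be the sum of the principal parts of $a(t)$ at its poles; since the poles form a union of Galois orbits, $R(t)=P(t)/Q(t)$ with $P,Q\in\mathbb Z[t]$, and since they have modulus $\ge 1$ the sequence $R_n=[t^n]R(t)$ is polynomially bounded and satisfies the integer recurrence $\sum_i Q_iR_{n-i}=0$ for $n$ large. Then $b(t)=a(t)-R(t)$ is algebraic with polynomially bounded coefficients $b_n=a_n-R_n$, and its only singularities are the branch points of $a(t)$; if $a(t)$ is not rational then $b(t)$ is not rational either and hence has at least one branch point. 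Deforming the Cauchy integral for $b_n$ past $|t|=1$ and collecting the local Puiseux contributions, singularity analysis gives
\[
b_n=\sum_{j}c_j\,\zeta_j^{-n}\,n^{\gamma_j}\bigl(1+\mathcal O(1/n)\bigr)+\mathcal O(\rho^{-n}),\qquad\rho>1,
\]
the sum running over the branch points $\zeta_j$ of modulus $1$ (branch points of larger modulus being absorbed into the error term), with $\gamma_j\in\mathbb Q$.

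The decisive step is to annihilate these unit-circle contributions. For each such $\zeta_j$ let $\mu_j(x)\in\mathbb Z[x]$ be the primitive integer polynomial vanishing at $\zeta_j^{-1}$, and let $\mu_j(\mathrm{shift})$ be the associated difference operator. On a sequence $\zeta_j^{-n}n^{\gamma}$ it drops the polynomial degree by exactly one — the factor $\mathrm{shift}-\zeta_j^{-1}$ contributes $(n+1)^{\gamma}-n^{\gamma}=\mathcal O(n^{\gamma-1})$ while the remaining factors only scale by nonzero constants — whereas on $\zeta_k^{-n}n^{\gamma}$ with $\zeta_k$ not conjugate to $\zeta_j$, on $(R_n)$ (whose frequencies are reciprocals of poles, hence never conjugates of $\zeta_j^{-1}$), and on $\mathcal O(\rho^{-n})$, it preserves the order. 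Applying a sufficiently high power of $\prod_j\mu_j(\mathrm{shift})$ to $(a_n)$ thus yields an integer sequence $(\tilde a_n)$ with $\tilde a_n=\tilde R_n+o(1)$ for some $\tilde R(t)\in\mathbb Q(t)$; taking an integer annihilating operator $\tilde Q(\mathrm{shift})$ of $(\tilde R_n)$, the sequence $\tilde Q(\mathrm{shift})(\tilde a_n)$ is an integer sequence tending to $0$, hence vanishes for large $n$. Therefore $(a_n)$ eventually satisfies a constant-coefficient linear recurrence, so it is eventually a quasi-polynomial and $a(t)$ is rational, contrary to hypothesis; this finishes the proof.

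The step I expect to be the real obstacle is the displayed asymptotic expansion: extracting a clean coefficient asymptotic from an algebraic function all of whose singularities lie on its circle of convergence requires the Flajolet--Odlyzko singularity-analysis apparatus ($\Delta$-domains, Hankel-type contours, transfer theorems), and this is the only analytically delicate point, the ensuing operator bookkeeping being purely formal. In the special case of \emph{bounded} coefficients the analysis can be bypassed entirely: $a(t)\bmod p$ is algebraic over $\mathbb F_p(t)$, hence $p$-automatic by Christol's theorem, so (recovering $a_n$ from $a_n\bmod p$) the sequence $(a_n)$ is $p$-automatic for two distinct primes, and Cobham's theorem then forces it to be eventually periodic, i.e. $a(t)$ rational.
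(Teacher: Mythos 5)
The paper itself offers no proof of this statement: it is quoted as a classical 1906 theorem with a citation to Fatou, so there is nothing internal to compare with, and your proposal has to stand on its own. Its skeleton --- singularity analysis of the algebraic function $a(t)$ at its dominant singularities, followed by integer constant-coefficient difference operators and the ``integer sequence tending to $0$ is eventually $0$'' punchline --- is a legitimate route (it would in fact prove the stronger classical form, which only needs radius of convergence at least $1$), and the Christol--Cobham remark is a correct complete proof in the bounded-coefficient case. But as written there are two genuine soft spots.

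First, the displayed asymptotics are too weak for the operator step, and this is exactly where the proof lives. With only the leading term $c_j\zeta_j^{-n}n^{\gamma_j}\bigl(1+\mathcal O(1/n)\bigr)$, applying $\mu_j(\mathrm{shift})$ knocks down the main term but merely \emph{preserves} the absolute error $\mathcal O(n^{\gamma_j-1})$ (an unspecified $\mathcal O$-term admits no cancellation), so no power of $\prod_j\mu_j(\mathrm{shift})$ gets the non-rational part below $o(1)$ once some $\gamma_j\ge 1$. You need the complete expansion: for every $M$, $b_n$ equals a finite sum of terms $c_{j,k}\zeta_j^{-n}n^{\gamma_{j,k}}$ plus $\mathcal O(n^{-M})$. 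Singularity analysis does supply this for algebraic functions (finitely many singularities, log-free Puiseux expansions, continuation to a $\Delta$-domain), and with it your bookkeeping closes; but the theorem you must quote is that full-expansion transfer, not the one-term estimate you displayed. Second, the Galois-equivariance claim for the poles of your particular branch is not justified: the action of $\mathrm{Gal}(\overline{\mathbb Q}/\mathbb Q)$ is not continuous and need not commute with the analytic continuation that decides how \emph{your} branch behaves over a given singular point; moreover a singular point can be simultaneously pole-like and a branch point (exponent $-3/2$, say), so the clean split of the singular set into poles and ``genuine branch points'' is wrong as stated. Fortunately rationality of $R(t)$ over $\mathbb Q$ is never needed: the singular points are algebraic numbers, so you can build all the integer operators from (powers of) the minimal polynomials of their reciprocals --- both to lower the branch-type terms and to annihilate the polar contributions --- and you should split each local Puiseux expansion into its integer-power principal part and the rest, rather than splitting the set of singularities. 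With those two repairs the argument is sound; note also that the Christol--Cobham shortcut covers only bounded, not polynomially bounded, coefficients.
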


The coefficients of the Hilbert series of graded algebras of finite Gelfand-Kirillov dimension grow polynomially.
Hence we obtain immediately the following consequence of Theorem \ref{Theorem of Fatou}.

\begin{theorem}\label{dichotomy of finite GKdim}
The Hilbert series of a finitely generated graded algebra of finite Gelfand-Kirillov dimension is either rational or transcendental.
\end{theorem}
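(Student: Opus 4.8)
The plan is to deduce the statement directly from Fatou's theorem (Theorem~\ref{Theorem of Fatou}). The Hilbert series $H(R,t)=\sum_{n\geq 0}\dim(R_n)t^n$ automatically has nonnegative integer coefficients, so it suffices to verify that the sequence $\dim(R_n)$, $n=0,1,2,\dots$, is polynomially bounded as soon as $\text{GKdim}(R)<\infty$; Fatou's dichotomy then applies verbatim and yields that $H(R,t)$ is rational or transcendental over ${\mathbb Q}(t)$.

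The one point that needs to be spelled out is the inequality $\dim(R_n)\leq g_V(n)$ for a suitable finite-dimensional generating space $V$. Since $R$ is finitely generated and graded, I would take $V$ to be the span of a finite set of homogeneous generators; by the convention that $R_0$ equals $0$ or $K$, these generators may be chosen of positive degree. A monomial $v_{i_1}\cdots v_{i_j}$ in such generators has degree at least $j$, so every homogeneous element of degree $n$ is a linear combination of monomials of length at most $n$; hence $R_n\subseteq V^0+V^1+\cdots+V^n=R^n$ and therefore $\dim(R_n)\leq \dim(R^n)=g_V(n)$. By the definition of the Gelfand--Kirillov dimension and its independence of the generating space, $\beta:=\text{GKdim}(R)=\limsup_{n\to\infty}\log_n g_V(n)$, so for each $\varepsilon>0$ we have $g_V(n)\leq n^{\beta+\varepsilon}$ for all large $n$; adjusting the constant to absorb the finitely many remaining values of $n$ gives $\dim(R_n)\leq bn^{c}$ for all $n$ with suitable $b,c>0$. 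Thus the coefficient sequence of $H(R,t)$ has polynomial growth in the sense introduced before Proposition~\ref{algebraic Hilbert series}.

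With this in hand the theorem is immediate: apply Theorem~\ref{Theorem of Fatou} to $a(t)=H(R,t)$. There is no analytic or combinatorial obstacle in this argument; the only thing requiring a little care is the elementary bookkeeping relating $\dim(R_n)$ to the growth function $g_V(n)$ --- in particular the observation that a product of positively graded generators of total degree $n$ has at most $n$ factors --- together with the standard fact, recalled in Section~\ref{section growth and Hilbert series}, that $\text{GKdim}(R)$ does not depend on the chosen system of generators. I would write that step out and leave the rest as a one-line appeal to Fatou's theorem. Note that associativity is never used, so the conclusion in fact holds for arbitrary finitely generated graded algebras of finite Gelfand--Kirillov dimension.
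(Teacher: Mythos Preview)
Your proposal is correct and follows essentially the same route as the paper: the paper simply observes, in the sentence preceding the theorem, that the coefficients of the Hilbert series of a graded algebra of finite Gelfand--Kirillov dimension grow polynomially and then invokes Fatou's theorem (Theorem~\ref{Theorem of Fatou}). You have spelled out in detail the step the paper leaves implicit, namely the inequality $\dim(R_n)\leq g_V(n)$ via positively graded generators and the extraction of a polynomial bound from the $\limsup$ definition of $\text{GKdim}(R)$; this elaboration is sound and nothing further is needed.
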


Corollary \ref{polynomial growth for PI-algebras} and Theorem \ref{GKdim of PI-algebras is finite} imply that the same dichotomy holds also for
finitely generated graded PI-algebras.

\begin{theorem}\label{dichotomy Hilbert series of PI-algebras}
The Hilbert series of a finitely generated graded PI-algebra is either rational or transcendental.
\end{theorem}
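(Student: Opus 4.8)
The plan is to reduce the statement to the dichotomy already obtained for graded algebras of finite Gelfand--Kirillov dimension (Theorem~\ref{dichotomy of finite GKdim}), which itself rests on Fatou's Theorem~\ref{Theorem of Fatou}. Thus the only thing that genuinely needs checking is that a finitely generated graded PI-algebra has a Hilbert series whose coefficients form a polynomially bounded sequence of nonnegative integers; everything else is imported.

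First I would fix a finitely generated graded PI-algebra $R$, say generated by $d$ homogeneous elements $r_1,\dots,r_d$ and satisfying a polynomial identity of degree $k$. The coefficients $\dim(R_n)$ of $H(R,t)$ are nonnegative integers by definition, so it remains only to bound them by a polynomial in $n$. To do this I would invoke the Shirshov Height Theorem~\ref{Shirshov theorem} together with Corollary~\ref{polynomial growth for PI-algebras}: the growth function $g_V(n)=\dim(R^n)$ with respect to the generating space $V=Kr_1+\cdots+Kr_d$ is bounded by a polynomial of degree $h=h(d,k)$. Since each $r_i$ has degree $\ge 1$, every homogeneous element of degree $n$ is a linear combination of products of at most $n$ of the $r_i$ and hence lies in $R^n$; therefore $\bigoplus_{i\le n}R_i\subseteq R^n$ and $\dim(R_n)\le g_V(n)$. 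Consequently the coefficients of $H(R,t)$ are polynomially bounded. (Equivalently, one may simply quote Berele's Theorem~\ref{GKdim of PI-algebras is finite} that $\mathrm{GKdim}(R)<\infty$, together with the fact that finite Gelfand--Kirillov dimension forces polynomial growth of the Hilbert coefficients.)

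Finally, I would apply Theorem~\ref{dichotomy of finite GKdim} directly --- or, if one prefers to be self-contained, Fatou's Theorem~\ref{Theorem of Fatou}, which now applies since $H(R,t)$ has polynomially bounded nonnegative integer coefficients --- to conclude that $H(R,t)$ is either rational or transcendental over ${\mathbb Q}(t)$, completing the proof.

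As for the main obstacle: at the level of this particular proof there is essentially none, since all the substance lies in the quoted results. The deep inputs are Fatou's 1906 theorem and the Shirshov Height Theorem (via Berele), both of which we are entitled to assume; the only real ``work'' is the elementary observation that the grading is compatible with Shirshov's height decomposition, so that the polynomial bound on $\dim(R^n)$ passes to $\dim(R_n)$. The one subtlety worth flagging is that the generators need not have degree $1$, which is exactly what the inclusion $\bigoplus_{i\le n}R_i\subseteq R^n$ above is designed to handle.
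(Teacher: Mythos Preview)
Your proof is correct and follows essentially the same route as the paper: the paper simply remarks that Corollary~\ref{polynomial growth for PI-algebras} and Theorem~\ref{GKdim of PI-algebras is finite} reduce the statement to Theorem~\ref{dichotomy of finite GKdim} (hence to Fatou's Theorem~\ref{Theorem of Fatou}). Your write-up is slightly more detailed in making explicit the inclusion $\bigoplus_{i\le n}R_i\subseteq R^n$ when the generators have arbitrary positive degree, but the strategy is identical.
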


In order to construct the graded algebras with algebraic or transcendental Hilbert series in Section \ref{section prescribed Hilbert series}
we need algebraic and transcendental power series with nonnegative integer coefficients. We shall survey several methods for construction of
transcendental power series. We already discussed in Section \ref{section growth and Hilbert series}
that the power series with intermediate growth of the coefficients are transcendental.

Recall that the power series $a(t)$ is {\it lacunary}, if
\[
a(t)=\sum_{k\geq 1}a_{n_k}t^{n_k},\quad a_{n_k}\not=0,\quad \lim_{k\to\infty}(n_{k+1}-n_k)=\infty.
\]
Maybe the best known example of such series is
\[
a(t)=\sum_{n\geq 1}t^{n!}
\]
which produces the first explicitly given transcendental number
\[
a\left(\frac{1}{10}\right)=\sum_{n\geq 1}\frac{1}{10^{n!}},
\]
the constant of Liouville \cite{Li}.
The following theorem is due to Mahler \cite[p. 42]{Ma}.
\begin{theorem}\label{theorem of Mahler}
Lacunary series with nonnegative integer coefficients are transcendental.
\end{theorem}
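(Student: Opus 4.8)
The plan is to deduce transcendence from the $D$-finiteness of algebraic functions (Theorem~\ref{Abel and Co}) by showing that the wide gaps in a lacunary series are incompatible with any linear recurrence with polynomial coefficients. Write the series as $a(t)=\sum_{k\geq 1}a_{n_k}t^{n_k}$ with $n_1<n_2<\cdots$, $a_{n_k}\neq 0$, and $d_k:=n_{k+1}-n_k\to\infty$, and suppose for contradiction that $a(t)$ is algebraic over $\mathbb{Q}(t)$. By Theorem~\ref{Abel and Co} the coefficient sequence $(a_n)$ then satisfies a nontrivial linear recurrence
\[
p_r(n)a_{n+r}+p_{r-1}(n)a_{n+r-1}+\cdots+p_0(n)a_n=0\qquad(n\geq n_0),
\]
with $p_0,\dots,p_r\in\mathbb{Q}[n]$ not all zero. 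Among all such recurrences (over all choices of $n_0$ and of the polynomials) I would fix one of minimal order $r$. If $r\geq 1$ then necessarily $p_r\not\equiv 0$, since otherwise $\sum_{i=0}^{r-1}p_i(n)a_{n+i}=0$ would be a nontrivial recurrence of strictly smaller order.

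The decisive step is to use the growing gaps to force $p_r\equiv 0$ when $r\geq 1$. Since $d_k\to\infty$ and $n_k\to\infty$, there are infinitely many indices $k$ with both $d_k>r$ and $n_{k+1}-r\geq n_0$; for each such $k$ put $n:=n_{k+1}-r$. Then $n,n+1,\dots,n+r-1$ all lie in the open interval $(n_k,n_{k+1})$, hence outside the support $\{n_j\}$ of $a(t)$, so $a_n=a_{n+1}=\cdots=a_{n+r-1}=0$, while $a_{n+r}=a_{n_{k+1}}\neq 0$. The recurrence at this $n$ therefore collapses to $p_r(n)a_{n+r}=0$, so $p_r(n)=0$. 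As $k$ runs over the infinitely many admissible values, the distinct integers $n_{k+1}-r$ are all roots of the polynomial $p_r$, forcing $p_r\equiv 0$ and contradicting the minimality of $r$. Hence $r=0$, and the recurrence reads $p_0(n)a_n=0$ with $p_0\not\equiv 0$; then $a_n=0$ for every $n\geq n_0$ outside the finite zero set of $p_0$, which contradicts $a_{n_k}\neq 0$ for all $k$ together with $n_k\to\infty$. This contradiction shows $a(t)$ is transcendental.

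I expect the points requiring care to be purely organizational: verifying that the minimal-order recurrence indeed has $p_r\not\equiv 0$ (hence that the collapse to $p_r(n)a_{n+r}=0$ is meaningful), isolating the $r=0$ base case, and checking that the gap hypothesis supplies infinitely many indices $k$ satisfying both $d_k>r$ and $n_{k+1}-r\geq n_0$ — all of which are immediate from $d_k\to\infty$ and $n_k\to\infty$. The genuinely substantial ingredient, that an algebraic power series is $D$-finite, is supplied by Theorem~\ref{Abel and Co}. If one prefers an analytic argument, note that $d_k\to\infty$ forces $n_k/k\to\infty$, so by the Fabry gap theorem the circle of convergence of $a(t)$ is a natural boundary; since an algebraic power series has positive radius of convergence and an algebraic function is analytically continuable past all but finitely many points of any circle, this again yields a contradiction. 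I would nonetheless present the recurrence argument as the primary proof, as it relies only on results already recalled in this survey.
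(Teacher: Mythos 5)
Your argument is correct, but it is worth noting that the paper itself offers no proof of Theorem~\ref{theorem of Mahler} at all: the result is simply attributed to Mahler \cite[p.~42]{Ma}, whose treatment is analytic (in the spirit of gap theorems and natural boundaries, which is also the alternative you sketch). Your primary route is genuinely different and self-contained within the survey: you invoke only Theorem~\ref{Abel and Co} (algebraic $\Rightarrow$ the coefficients are P-recursive) and then show that a support with gaps tending to infinity forces the leading polynomial of any nontrivial recurrence to have infinitely many integer zeros, hence to vanish identically, with the order-zero case excluded by the infinitude of nonzero terms. This in fact proves the stronger statement that a lacunary series is not even $D$-finite, and it nowhere uses the nonnegativity or integrality of the coefficients, so it is more general than the statement as formulated. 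Two small remarks: the minimal-order device is dispensable (given any nontrivial recurrence, simply discard identically zero leading coefficients so that $p_r\not\equiv 0$, and your gap argument then yields the contradiction directly); and in the Fabry-gap alternative you should say explicitly why the radius of convergence is finite as well as positive --- finiteness comes from having infinitely many coefficients $\geq 1$ (here the integrality hypothesis is used), positivity from algebraicity, e.g.\ via the growth bound of Proposition~\ref{algebraic Hilbert series} --- so that the natural boundary really contradicts the finitely many singularities of an algebraic function. With these cosmetic points addressed, your recurrence proof is a complete and arguably more elementary substitute for the citation to Mahler.
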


\begin{example}\label{example in Nishioka}
The following power series satisfy the conditions in Theorem \ref{theorem of Mahler}.
A direct proof of their transcendency is given in the book of Nishioka \cite[Theorem 1.1.2]{N}:
\[
a(t)=\sum_{n\geq 0}t^{d^n},\quad d\geq 2.
\]
\end{example}

In the definition of lacunary series we do not restrict the growth of the coefficients although in the above given examples
the nonzero coefficients are equal to 1.
Another way to construct transcendental series with
polynomial or exponential growth of the coefficients uses multiplicative functions, i.e.
functions $\alpha:{\mathbb N}\to {\mathbb N}_0$ satisfying $\alpha(n_1)\alpha(n_2)=\alpha(n_1n_2)$, $n_1,n_2\in\mathbb N$.
S\'ark\"ozy \cite{Sa} described the functions $\alpha$ such that the generating function
$\displaystyle a(t)=\sum_{n\geq 1}a_nt^n$ of the sequence $a_n=\alpha(n)$, $n=1,2,\ldots$, is rational.
Later B\'ezivin \cite{Bez} extended this result for algebraic generating functions.
Recently, another, more number-theoretic proof of the theorem of B\'ezivin was given by Bell, Bruin, and Coons \cite{BBC}.

\begin{theorem}\label{theorem of Bezivin}
Let $\alpha:{\mathbb N}\to {\mathbb N}_0$ be a multiplicative function such that its generating function
\[
a(t)=\sum_{n\geq 1}\alpha(n)t^n
\]
is algebraic. Then either $\alpha(n)=0$ for all sufficiently large $n$, i.e. $a(t)$ is a polynomial,
or there exists a nonnegative integer $k$ and a multiplicative
periodic function $\chi:{\mathbb N}\to {\mathbb Q}$ such that $\alpha(n)=n^k\chi(n)$.
\end{theorem}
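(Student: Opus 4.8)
The plan is to reduce, via Proposition~\ref{algebraic Hilbert series} and Theorem~\ref{Theorem of Fatou}, to the case of a \emph{rational} generating function, and then to read off the shape of $\alpha$ from its partial–fraction expansion together with the relations $\alpha(p^{m})=\alpha(p)^{m}$. If $a(t)$ is a polynomial we are in the first alternative, so assume $\alpha$ is not eventually zero; from $\alpha(1)=\alpha(1)^{2}$ we may take $\alpha(1)=1$, and then $a(t)$ has a finite positive radius of convergence $\rho\le 1$. Since $\alpha(1)\neq 0$, the support $\{n:\alpha(n)\neq 0\}$ has greatest common divisor $1$, so by Pringsheim's theorem together with the aperiodicity lemma $t=\rho$ is the only singularity of $a(t)$ on $|t|=\rho$. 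By Proposition~\ref{algebraic Hilbert series} the coefficients grow either polynomially or exponentially, and I would treat the two cases separately.

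Suppose first the coefficients grow exponentially, so $\rho<1$. Singularity analysis applied to the algebraic function $a(t)$ at $\rho$, using that the coefficients are nonnegative, yields
\[
\alpha(n)\ \sim\ C\,n^{\lambda-1}\rho^{-n},\qquad C>0,\quad \lambda\in\mathbb{Q}_{>0},
\]
as $n\to\infty$. Substituting this into the identity $\alpha(p^{2})=\alpha(p)^{2}$, valid for every prime $p$ by multiplicativity, gives $C\,p^{2\lambda-2}\rho^{-p^{2}}\sim C^{2}p^{2\lambda-2}\rho^{-2p}$, that is, $\rho^{-(p^{2}-2p)}\to C$ as $p\to\infty$ through the primes; this is impossible since $0<\rho<1$ and $p^{2}-2p\to\infty$. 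Hence the coefficients grow polynomially.

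Now the coefficients grow polynomially, and Theorem~\ref{Theorem of Fatou} forces $a(t)$ to be rational, say $a(t)=P(t)/Q(t)$. A rational function with integer coefficients of polynomial growth has all of its poles on or outside the unit circle; writing the denominator with integer coefficients and constant term $1$, the reciprocals of the poles are algebraic integers with all conjugates in the closed unit disc, so by Kronecker's theorem the poles are roots of unity. The partial–fraction expansion then gives, for all $n\ge n_{0}$,
\[
\alpha(n)=\sum_{\zeta}P_{\zeta}(n)\,\zeta^{\,n},
\]
a finite sum over roots of unity $\zeta$, with $P_{\zeta}\in\mathbb{C}[x]$ and $\deg P_{\zeta}\le D$ for some fixed $D$.

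It remains to extract the exact form of $\alpha$. Substituting $n=p^{m}$ in the last display and using $\alpha(p^{m})=\alpha(p)^{m}$: after restricting $m$ to an arithmetic progression on which $\zeta^{\,p^{m}}$ is constant, the left side is a geometric sequence in $m$ while the right side is a polynomial of degree $\le D$ evaluated at $p^{m}$, so by linear independence of geometric sequences with distinct ratios one gets $\alpha(p)\in\{1,p,p^{2},\dots,p^{D}\}$, say $\alpha(p)=p^{e_{p}}$, whenever $\alpha(p)\neq 0$. Comparing next along the products $\alpha(pq)=\alpha(p)\alpha(q)$, with a pigeonhole over residues modulo the common period of the $P_{\zeta}$ and Dirichlet's theorem on primes in arithmetic progressions, one shows that $e_{p}$ is one and the same integer $k$ for every prime $p$ with $\alpha(p)\neq 0$, that $D=k$, and (by taking a prime factor of $n$ in $\{p:\alpha(p)=0\}$ and dilating a ``support residue'' of the quasi-polynomial) that $\{p:\alpha(p)=0\}$ is finite. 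Then $\chi(n):=\alpha(n)/n^{k}$ is completely multiplicative, $\{0,1\}$-valued, and equal to the indicator function of the multiplicative semigroup generated by the cofinitely many primes with $\alpha(p)\neq 0$; that semigroup is a union of residue classes, so $\chi$ is periodic and $\alpha(n)=n^{k}\chi(n)$. I expect this last paragraph to be the main obstacle: forcing $e_{p}$ to be a single integer and ruling out infinitely many primes killing $\alpha$ is where the genuine arithmetic lies, and the complete argument — as well as the more delicate general multiplicative and multivariable versions — is carried out in \cite{Bez} and, in a more number-theoretic form, in \cite{BBC}.
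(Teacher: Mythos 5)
The paper does not prove this statement at all: it is Bézivin's theorem, quoted from \cite{Bez} (with an alternative, more number-theoretic proof in \cite{BBC}), so there is no in-paper argument to compare yours with and the proposal must stand on its own. As written it does not, and the decisive gap is the one you yourself flag: the entire last step --- that $\alpha(p)$ is a power of $p$ with one and the same exponent $k$ for every prime, that only finitely many primes can have $\alpha(p)=0$, and that the resulting $\chi(n)=\alpha(n)/n^{k}$ is periodic --- is only gestured at (``pigeonhole \ldots Dirichlet \ldots one shows'') and then explicitly deferred to \cite{Bez} and \cite{BBC}. That is precisely the substance of the theorem; the part you do carry out (reduction to rational $a(t)$ via Proposition \ref{algebraic Hilbert series} and Theorem \ref{Theorem of Fatou}, then Fatou's lemma and Kronecker's theorem forcing the poles to be roots of unity, hence $\alpha(n)=\sum_{\zeta}P_{\zeta}(n)\zeta^{n}$ eventually) is the routine part. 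A proof that outsources the step where, in your own words, ``the genuine arithmetic lies'' to the sources being surveyed is a plan, not a proof.

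There is also a concrete flaw in the step you do argue. To eliminate exponential coefficient growth you invoke ``Pringsheim plus aperiodicity'' to conclude that $t=\rho$ is the only singularity on $|t|=\rho$, and from this a clean asymptotic $\alpha(n)\sim Cn^{\lambda-1}\rho^{-n}$ valid along all $n$, in particular along $n=p$ and $n=p^{2}$. Nonnegative coefficients with support of gcd $1$ do not force a unique singularity on the circle of convergence: $1/\bigl((1-t)(1+t^{2})\bigr)$ has coefficients $1,1,0,0,1,1,0,0,\ldots$ and three singularities on $|t|=1$. With several dominant singularities the coefficients can oscillate or vanish along the primes, so the contradiction you extract from $\alpha(p^{2})=\alpha(p)^{2}$ (comparing $\rho^{-p^{2}}$ with $\rho^{-2p}$) is not available as stated; this case needs an honest argument (for instance via suitable $\limsup$ comparisons along multiplicatively structured subsequences) or must likewise be credited to \cite{Bez}/\cite{BBC} rather than claimed.
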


The multiplicative periodic functions which appear in Theorem \ref{theorem of Bezivin} of B\'ezivin were described by
Leitmann and Wolke \cite{LW}.

The proof of the following corollary can be found in \cite{BBC}. Here we give simplified arguments.

\begin{corollary}\label{dichotomy for multiplicative functions}
If $\alpha:{\mathbb N}\to {\mathbb N}_0$ is a multiplicative function, then the generating function
\[
a(t)=\sum_{n\geq 1}\alpha(n)t^n
\]
is either rational or transcendental.
\end{corollary}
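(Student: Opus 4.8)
The plan is to deduce this at once from Theorem~\ref{theorem of Bezivin}, so that the only real work is to check that the exceptional case of that theorem still yields a rational function.

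First I would assume that $a(t)$ is algebraic, since otherwise there is nothing to prove, and then apply Theorem~\ref{theorem of Bezivin}. It leaves two possibilities. In the first, $\alpha(n)=0$ for all sufficiently large $n$, so $a(t)$ is a polynomial and in particular rational. In the second, there exist a nonnegative integer $k$ and a multiplicative periodic function $\chi:\mathbb N\to\mathbb Q$ with $\alpha(n)=n^k\chi(n)$ for all $n\geq 1$; here I would use only that $\chi$ is periodic and rational-valued, not that it is multiplicative.

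For the second case I would argue as follows. Let $N$ be a period of $\chi$, so that $\chi(n)$ depends only on the residue of $n$ modulo $N$, and introduce the Euler operator $\vartheta=t\,\dfrac{d}{dt}$, which acts on monomials by $\vartheta^k(t^n)=n^kt^n$. Grouping the terms of $\sum_{n\geq 1}\chi(n)t^n$ according to the class of $n$ modulo $N$ (writing $n=j+mN$ with $1\leq j\leq N$ and $m\geq 0$) expresses this series as $\bigl(\sum_{j=1}^{N}\chi(j)t^j\bigr)/(1-t^N)$, which lies in $\mathbb Q(t)$ because the numbers $\chi(j)$ are rational. Then
\[
a(t)=\sum_{n\geq 1}n^k\chi(n)t^n=\vartheta^k\!\left(\frac{\sum_{j=1}^{N}\chi(j)t^j}{1-t^N}\right),
\]
and since $\mathbb Q(t)$ is closed under $d/dt$ and under multiplication by $t$, it is closed under $\vartheta$, so $a(t)$ is again a rational function.

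Combining the two cases, an algebraic $a(t)$ is always rational, which is exactly the asserted dichotomy. The one substantial ingredient is B\'ezivin's theorem, which is quoted above as Theorem~\ref{theorem of Bezivin}; the remaining step---that a periodic rational-valued sequence has a rational generating function and that this rationality survives the operator $\vartheta^k$---is elementary, and is the only point that really needs to be spelled out.
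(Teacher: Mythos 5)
Your proposal is correct, and it rests on the same key ingredient as the paper (B\'ezivin's theorem, quoted as Theorem~\ref{theorem of Bezivin}), but it finishes differently. In the case $\alpha(n)=n^k\chi(n)$ the paper does not compute anything: it only observes that periodicity makes $\chi$ bounded, hence $\alpha(n)\leq cn^k$, so the coefficients of $a(t)$ are polynomially bounded nonnegative integers, and then it invokes Fatou's theorem (Theorem~\ref{Theorem of Fatou}) to rule out an algebraic nonrational series. You instead prove rationality directly: a rational-valued $N$-periodic sequence has generating function $\bigl(\sum_{j=1}^{N}\chi(j)t^j\bigr)/(1-t^N)\in\mathbb{Q}(t)$, and applying $\vartheta^k$ with $\vartheta=t\,\frac{d}{dt}$ stays inside $\mathbb{Q}(t)$, so $a(t)=\vartheta^k\bigl(\sum_{n\geq 1}\chi(n)t^n\bigr)$ is rational. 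Your route is more self-contained (it does not use Fatou at all, nor the integrality or nonnegativity of the coefficients at this step) and it yields slightly more, namely an explicit rational expression for $a(t)$ with denominator a power of $1-t^N$; the paper's route is shorter in context, since Fatou's theorem is already the centerpiece of that section, and it emphasizes the ``polynomially bounded $\Rightarrow$ rational or transcendental'' dichotomy that the survey is promoting. Both arguments are complete and correct.
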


\begin{proof}
Let the generating function $a(t)$ of the multiplicative function $\alpha:{\mathbb N}\to {\mathbb N}_0$ be algebraic.
By Theorem \ref{theorem of Bezivin} $a(t)$ is either a polynomial (hence a rational function) or $\alpha$ is of the form
$\alpha(n)=n^k\chi(n)$, $n=1,2,\ldots$, where $k\in{\mathbb N}_0$ and $\chi$ is a multiplicative periodic function.
The periodicity of $\chi$ implies that it is bounded. Hence $\alpha(n)\leq n^kc$ for some constant $c>0$
and the coefficients of the power series $a(t)$ grow polynomially. By Theorem \ref{Theorem of Fatou} of Fatou
the power series $a(t)$ cannot be algebraic and nonrational.
\end{proof}

Now it is easy to construct multiplicative functions with transcendental generating function.
The following simple example is from \cite{D2}.

\begin{example}
If $\alpha:{\mathbb N}\to {\mathbb N}_0$ is a multiplicative function it is completely determined by its its values on the prime numbers $p$.
Let $\alpha(p)=q$, where the $q$'s are pairwise different primes and $\alpha(p)\not=p$ for all prime $p$.
If the generating function $\displaystyle a(t)=\sum_{n\geq 1}\alpha(n)t^n$ is rational, then there exists a positive integer $k$
and a periodic multiplicative function $\chi:{\mathbb N}\to {\mathbb Q}$ such that
\[
\alpha(p)=p^k\chi(p)=q,\quad \chi(p)=\frac{q}{p^k}.
\]
Therefore the multiplicative function $\chi$ is not periodic and this implies that $a(t)$ cannot be rational.
\end{example}

By the theorem of Govorov \cite{G1} the Hilbert series $H(R,t)$ of the finitely presented monomial algebra $R$
is a rational function.
Ufnarovskij \cite{U4} gave a construction which associates to $R$ a finite oriented graph $\Gamma(R)$.

\begin{definition}\label{Ufnarovski graph}
Let
\[
R=K\langle X_d\rangle/(U)\quad U\subset \langle X_d\rangle, \vert U\vert<\infty,
\]
be a finitely presented monomial algebra and let $k+1$ be the maximum of the degrees of the monomials in the set $U$.
The following graph $\Gamma(R)$ is called the {\it Ufnarovskij graph}.
The set of the vertices of $\Gamma(R)$ consists of all monomials
of degree $k$ which are not divisible by a monomial in $U$. Two vertices $v_1$ and $v_2$ are connected by an oriented edge from $v_1$ to $v_2$
if and only if there are two elements $x_i,x_j\in X_d$ such that $v_1x_i=x_jv_2\not\in U$. Then the edge is labeled by $x_i$.
(Multiple edges and loops are allowed.) The generating function
\[
g(\Gamma(R),t)=\sum_{n\geq 1}g_nt^n,
\]
of the graph $\Gamma(R)$ has coefficients $g_n$ equal to the number of paths of length $n$.
\end{definition}

The algebra $R$ in the above definition has a basis
consisting of all monomials in $\langle X_d\rangle$ which are not divisible by a monomial in $U$.
The edges of $\Gamma(R)$ are in a bijective correspondence with the basis elements of degree $k+1$ of $R$
and the paths of length $n$ are in bijection with the monomials of degree $n+k$ in the basis.
Ufnarovskij \cite{U4} gave simple arguments
(based on the Cayley-Hamilton theorem only) for the proof of the following result.

\begin{theorem}\label{rationality of graph of Ufnarovksi}
Let $R=K\langle X_d\rangle/(U)$ be a finitely presented monomial algebra and let the maximum of the degrees of the monomials in $U$ is equal to $k+1$.
Then the generating function $g(\Gamma(R),t)$ of the graph $\Gamma(R)$ is a rational function.
The Hilbert series $H(R,t)$ of $R$ and the generating function $g(\Gamma(R),t)$ are related by
\[
H(R,t)=\sum_{n\geq 0}a_nt^n=\sum_{n=1}^ka_nt^n+t^kg(\Gamma(R),t).
\]
Hence $H(R,t)$ is a rational function.
\end{theorem}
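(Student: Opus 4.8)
The plan is to represent $g(\Gamma(R),t)$ as the generating function of a sequence obeying a constant-coefficient linear recurrence, and then to transport this rationality to $H(R,t)$ through the path-to-monomial dictionary recalled just before the statement. First I would fix the finite vertex set $\{v_1,\ldots,v_N\}$ of $\Gamma(R)$ — finite because there are only finitely many monomials of degree $k$ in $d$ letters — and form its adjacency matrix $A$, the $N\times N$ matrix of nonnegative integers whose $(i,j)$ entry counts the oriented edges from $v_i$ to $v_j$, loops and multiplicities included. The standard fact that $(A^n)_{ij}$ counts the oriented paths of length $n$ from $v_i$ to $v_j$ then gives $g_n=\mathbf{e}^{\mathsf{T}}A^n\mathbf{e}$ for all $n\geq 0$, where $\mathbf{e}=(1,\ldots,1)^{\mathsf{T}}$ and one puts $g_0=\mathbf{e}^{\mathsf{T}}\mathbf{e}=N$; equivalently, $g_n$ is the sum of all entries of $A^n$.

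Next I would invoke the Cayley--Hamilton theorem, as suggested by Ufnarovskij's argument. If $\chi(z)=z^N-c_{N-1}z^{N-1}-\cdots-c_0$ is the characteristic polynomial of $A$, then $A^N=\sum_{j=0}^{N-1}c_jA^{j}$, and multiplying by $A^n$ gives $A^{n+N}=\sum_{j=0}^{N-1}c_jA^{n+j}$ for every $n\geq 0$. Applying the linear functional $M\mapsto\mathbf{e}^{\mathsf{T}}M\mathbf{e}$ yields the constant-coefficient recurrence $g_{n+N}=\sum_{j=0}^{N-1}c_j g_{n+j}$ valid for all $n\geq 0$. Consequently the product $\bigl(1-c_{N-1}t-\cdots-c_0t^{N}\bigr)\sum_{n\geq 0}g_nt^n$ has all coefficients in degrees $\geq N$ equal to $0$, hence is a polynomial; since $g(\Gamma(R),t)=\sum_{n\geq 0}g_nt^n-g_0$, it is a rational function. (One may equally present this through the closed form $g(\Gamma(R),t)=\mathbf{e}^{\mathsf{T}}\bigl((I-tA)^{-1}-I\bigr)\mathbf{e}$, rational by Cramer's rule with denominator $\det(I-tA)$.)

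Finally I would feed in the combinatorial correspondence already supplied in the excerpt: $R$ has as a $K$-basis the monomials of $\langle X_d\rangle$ not divisible by any element of $U$, and sending an oriented path $v_0\to v_1\to\cdots\to v_n$ to the length-$(n+k)$ word it spells out is a bijection between paths of length $n$ in $\Gamma(R)$ and degree-$(n+k)$ basis monomials of $R$. Thus $g_n=\dim R_{n+k}=a_{n+k}$ for every $n\geq 1$, so that $t^{k}g(\Gamma(R),t)=\sum_{n\geq 1}a_{n+k}t^{n+k}=\sum_{m>k}a_mt^m$, which is precisely $H(R,t)$ with its finitely many components of degree $\leq k$ collected into the polynomial $\sum_{n=1}^{k}a_nt^n$. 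This is the displayed identity, and since its right-hand side is a polynomial plus $t^k$ times the rational function $g(\Gamma(R),t)$, the Hilbert series $H(R,t)$ is rational.

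The two algebraic steps are routine; the substance lies in the path-to-monomial bijection, whose validity rests on $k+1$ being an upper bound for the degrees of the relators, so that ``not divisible by a monomial of $U$'' is a local condition on a window of $k+1$ consecutive letters — exactly the information an edge $v_1x_i=x_jv_2\notin U$ encodes. As this bijection is given in the excerpt, the only remaining obstacle is bookkeeping: aligning lengths with degrees ($n\leftrightarrow n+k$) and isolating the low-degree terms $a_1,\ldots,a_k$ (together with the $R_0$-term, according to the chosen convention for $R_0$).
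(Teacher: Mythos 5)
Your argument is correct and follows essentially the same route the paper indicates: the adjacency-matrix/Cayley--Hamilton recurrence for the path counts (the ``simple arguments based on the Cayley-Hamilton theorem only'' attributed to Ufnarovskij) combined with the path-to-monomial bijection stated just before the theorem, with the degree shift $n\leftrightarrow n+k$ handled as in the paper.
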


Now the theorem of Govorov \cite{G1} is an obvious consequence of Theorem \ref{rationality of graph of Ufnarovksi}.
Additionally, the growth of the finitely presented monomial algebra $R$ can be immediately determined
from purely combinatorial properties of its graph $G(R)$ -- the existence of cycles and their disposition.

The construction of Ufnarovskij can be translated in terms of automata theory and theory of formal languages.

A {\it language $L$ on the alphabet} $X_d$ is a subset of $\langle X_d\rangle$. The language $L$ is {\it regular}
if it is obtained from a finite subset of $\langle X_d\rangle$ applying a finite number of operations of union, multiplication,
and the operation $\ast$ defined by $\displaystyle T^{\ast}=\bigcup_{n\geq 1}T^n$, $T\subset \langle X_d\rangle$.

In the theory of computation a {\it deterministic finite automaton} is a five-tuple $A=(S,X_d,\delta,q_{0},F)$, where
$S$ is a finite set of {\it states}, $X_d$ is a finite alphabet,
$\delta:S\times X_d\to S$ is a {\it transition function}, $s_0$ is the {\it initial} or the {\it start state},
and $F\subseteq S$ is the (possible empty) set of the {\it final states}.
The automaton $A$ can be identified with a {\it finite directed graph} $\Gamma(A)$.
The set of states $S$ is identified with the set of vertices of $\Gamma(A)$.
Each vertex $v\in S$ is an {\it origin} of $d$ edges labeled by the elements of $X_d$
and $v_2$ is the {\it destination} of the edge from $v_1$ to $v_2$ labeled by $x_i$ if $\delta(v_1,x_i)=v_2$.
The language $L(A)$ {\it recognized by the automaton} $A$
consists of all words $x_{i_1}\cdots x_{i_n}$ such that starting from the initial state $s_0$ and following the edges labeled by $x_{i_1},\ldots,x_{i_n}$
we reach a vertex $f$ from the set of final states $F$.
The theorem of Kleene connects deterministic finite automata and regular languages.

\begin{theorem}\label{Theorem of Kleene}
A language $L$ is regular if and only if it is recognized by a deterministic finite automaton.
\end{theorem}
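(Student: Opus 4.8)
The plan is to prove the two implications separately, since each uses quite different machinery. Recall that the definition adopted here requires the transition function $\delta\colon S\times X_d\to S$ of a deterministic finite automaton $A=(S,X_d,\delta,s_0,F)$ to be \emph{total}: in $\Gamma(A)$ every vertex is the origin of exactly $d$ labelled edges.

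\emph{Recognizable $\Rightarrow$ regular.} Let $L=L(A)$ for a DFA $A$ with state set $S=\{s_1,\dots,s_m\}$, $s_1=s_0$. For $0\le k\le m$ and $1\le i,j\le m$ let $L_{ij}^{(k)}\subseteq\langle X_d\rangle$ consist of the words $w$ which, read from $s_i$, lead to $s_j$ while visiting only states among $s_1,\dots,s_k$ as \emph{intermediate} states. Each $L_{ij}^{(0)}$ is a finite set of single letters, together with the empty word when $i=j$, hence regular, and there is the recursion
\[
L_{ij}^{(k)}=L_{ij}^{(k-1)}\cup L_{ik}^{(k-1)}\bigl(L_{kk}^{(k-1)}\bigr)^{\ast}L_{kj}^{(k-1)}.
\]
By induction on $k$, every $L_{ij}^{(k)}$ is regular; since $L(A)=\bigcup_{s_j\in F}L_{1j}^{(m)}$ is a finite union of regular languages, $L$ is regular.

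\emph{Regular $\Rightarrow$ recognizable.} I would argue by structural induction on the way $L$ is built from a finite subset of $\langle X_d\rangle$ by union, multiplication and the operation $\ast$, but passing through \emph{nondeterministic} automata with $\varepsilon$-transitions, which combine painlessly. A finite subset of $\langle X_d\rangle$ is recognized by an evident finite automaton. From automata recognizing $L_1,L_2$ (resp.\ $T$) one produces automata for $L_1\cup L_2$ (disjoint union plus a new initial state with $\varepsilon$-edges to the two former initial states), for $L_1L_2$ ($\varepsilon$-edges from the final states of the first automaton to the initial state of the second), and for $T^{\ast}$ (a new initial-and-final state, with an $\varepsilon$-edge to the former initial state and $\varepsilon$-edges back from each former final state). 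Finally the subset construction converts an $\varepsilon$-NFA into an equivalent DFA: its states are the subsets of the NFA state set, its initial state is the $\varepsilon$-closure of $\{s_0\}$, the transition on a letter $x$ sends a subset $Q$ to the $\varepsilon$-closure of the set of all $x$-successors of members of $Q$, and the final states are the subsets meeting $F$; the empty subset is a dead state that absorbs missing transitions, so $\delta$ is total.

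The only delicate points are the verifications that the $\varepsilon$-NFA combinations and the subset construction preserve the recognized language, and keeping the totality condition on $\delta$ in view on both sides -- it dictates the exact form of $L_{ij}^{(0)}$ in the first part and forces the empty set to be kept as a state in the second. I expect no real obstacle beyond this bookkeeping.
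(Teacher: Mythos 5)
The paper does not prove this statement at all: Kleene's theorem is quoted as classical background (with Lallement's book \cite{La} given as a general reference), so there is no proof in the paper to match yours against. Your argument is the standard one --- state elimination via the languages $L_{ij}^{(k)}$ (Kleene/McNaughton--Yamada) for ``recognizable $\Rightarrow$ regular'', and Thompson-style $\varepsilon$-NFA constructions followed by the Rabin--Scott subset construction for the converse --- and it is essentially correct; the verifications you defer are indeed routine bookkeeping. One detail deserves attention, because the paper's definition of the operation $\ast$ is the \emph{positive} closure $T^{\ast}=\bigcup_{n\geq 1}T^{n}$, which contains the empty word only when $T$ does. In your first direction this is harmless: $L_{kk}^{(k-1)}$ always contains the empty word, so $\bigl(L_{kk}^{(k-1)}\bigr)^{\ast}$ coincides with the usual Kleene closure and the recursion is valid. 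In the second direction, however, your gadget for $T^{\ast}$ (a new state that is both initial and final) recognizes $\{\varepsilon\}\cup T^{+}$ rather than the paper's $T^{\ast}=T^{+}$; the fix is immediate --- e.g.\ keep the old initial and final states and simply add $\varepsilon$-edges from each final state back to the initial state, or use $T^{+}=T\cdot(\{\varepsilon\}\cup T^{+})$ --- but as written the induction step proves closure under the wrong operation. With that small adjustment, and your correct observation that the empty subset must be retained in the subset construction to keep $\delta$ total as the paper requires, the proof is complete.
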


For a background on the topic we refer e.g. to the book by Lallement \cite{La}.

Ufnarovskij \cite{U5} introduced the notion of an automaton monomial algebra.

\begin{definition}\label{automaton algebra}
Let
\[
R=K\langle X_d\rangle/(U), \quad U\subset\langle X_d\rangle,
\]
be a monomial algebra. It is called {\it automaton} if the set of monomials in $\langle X_d\rangle$
not divisible by a monomial from $U$ (which form a basis of $R$) is a regular language. Equivalently,
if $U$ is a minimal set of relations, then $U$ is also a regular language.
\end{definition}

It is known that when $L\subset \langle X_d\rangle$ is a regular language,
then the generating function $g(L,t)$ of the sequence of the numbers of its words of length $n$ is a rational function.
Since finite sets $U\subset\langle X_d\rangle$ are regular languages, this gives one more proof of the theorem of Govorov \cite{G1}.
Involving methods of graph theory Ufnarovskij \cite{U5} showed how to construct a basis of the automaton algebra $R$
and to compute efficiently it growth and Hilbert series.

For further results, see e.g. the paper by M{\aa}nsson and Nordbeck \cite{ManN}
where the authors introduce the generalized Ufnarovskij graph
and as an application show how this construction can be used to test Noetherianity of automaton algebras.
Another application is given by Ced\'o and Okni\'nski \cite{CO} who proved that
every finitely generated algebra which is a finitely generated module of a finitely generated commutative subalgebra is automaton.
See also Ufnarovski \cite{U6} and M\.{a}nsson \cite{Man} for applying computers for explicit calculations.

The above discussions show that it is relatively easy to construct algebras with rational Hilbert series.
It is more difficult to construct algebras with algebraic and nonrational Hilbert series.
Now we shall survey some constructions of algebraic power series using automata theory and theory of formal languages.
Recently there are new applications the theory of regular languages and the theory of finite-state automata
which give new results and new proofs of old results providing algebras with rational and algebraic nonrational Hilbert series,
see La Scala \cite{LS}, La Scala, Piontkovski and Tiwari \cite{LSPT} and La Scala and Piontkovski \cite{LSP} and the references there.

\section{Planar rooted trees and algebraic series}\label{section planar trees}

In this section we shall present another method for construction of algebraic power series with nonnegative integer coefficients.
The leading idea is to start with a sequence of finite sets of objects $A_n$, $n=0,1,2,\ldots$,
for which we know (or can prove), that the generating function
\[
a(z)=\sum_{n\geq 0}\vert A_n\vert z^n
\]
of the sequence $\vert A_n\vert$, $n=0,1,2,\ldots$, is algebraic and nonrational.

A motivating example are the Catalan numbers. The $n$-th Catalan number $c_n$ is equal to the number of planar binary rooted trees with $n$ leaves.
\begin{center}
\includegraphics[width=4cm]{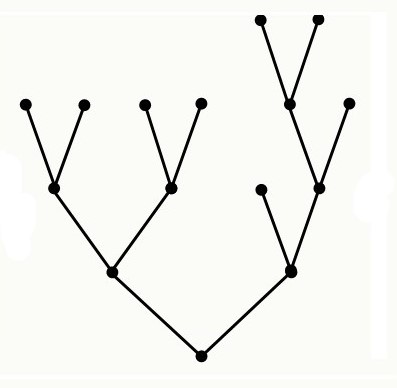}\\
Fig. 1
\end{center}
\medskip
We may introduce the operation {\it gluing of trees} in the set of planar binary rooted trees
which gives it the structure of a nonassociative groupoid
(or a nonassociative {\it magma}):
\begin{center}
\includegraphics[width=10cm]{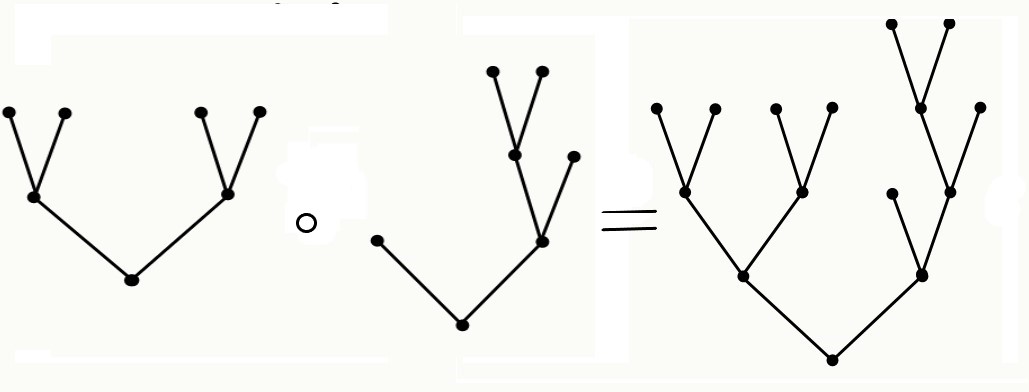}\\
Fig. 2
\end{center}
\medskip
Clearly, this magma is isomorphic to the one-generated free magma $\{x\}$. For example, the tree in Fig. 1 correspond to the nonassociative monomial
$((xx)(xx))(x((xx)x))$ and the gluing of the trees in Fig. 2 can be interpreted as the concatenation of the monomials
$(xx)(xx)$ and $x((xx)x)$ preserving the parentheses:
\[
(xx)(xx)\circ x((xx)x)=((xx)(xx))(x((xx)x)).
\]
Hence, as it is well known, the Catalan numbers satisfy the recurrence relation
\[
c_n=\sum_{k=1}^{n-1}c_kc_{n-k},\quad n=2,3,\ldots,
\]
which implies that their generating function
\[
c(t)=\sum_{n\geq 1}c_nt^n
\]
satisfies the quadratic equation $c^2(t)=c(t)-t$. This also gives the formulas
\[
c(t)=\frac{1-\sqrt{1-4t}}{2},\quad c_n=\frac{1}{n}\binom{2n-2}{n-1},\quad n=1,2,\ldots.
\]
In this way we obtain a nonrational power series which is algebraic.
More generally we may consider the generating function which counts the planar rooted trees
with fixed number of outcoming branches in each vertex, see, e.g. Drensky and Holtkamp \cite{DH}.
This can be formalized in the language of universal algebra in the following way.

We start with a set
\[
\Omega=\Omega_2\cup\Omega_3\cup\cdots
\]
which is a union of finite sets of $n$-ary operations
\[
\Omega_n=\{\nu_{ni}\mid i=1,\ldots,p_n\},\quad n\geq 2,
\]
and an arbitrary set of variables $Y$. We consider the free $\Omega$-magma
\[
\{Y\}_{\Omega}=\mathcal{M}_{\Omega}(Y).
\]
The elements of $\{Y\}_{\Omega}$ are the $\Omega$-monomials which are built inductively.
We assume that $Y\subset \{Y\}_{\Omega}$ and if $u_1,\ldots,u_n\in \{Y\}_{\Omega}$,
then $\nu_{ni}(u_1,\ldots,u_n)$ also belongs to $\{Y\}_{\Omega}$.
In the same way as one constructs the free associative algebra $K\langle Y\rangle$ as the vector space with basis the elements of the
free semigroup $\langle Y\rangle$ and the free nonassociative algebra $\{Y\}$ starting with the free magma $\{Y\}$,
one can construct the free $\Omega$-algebra  $K\{Y\}_{\Omega}$.
This allows to use methods and ideas of ring theory for the study of free $\Omega$-magmas.
The elements of $\{Y\}_{\Omega}$ can be described in terms
of labeled reduced planar rooted trees in a way similar to the way
we identify the free magma $\{x\}$ with the set of planar binary rooted trees.

If $T$ is a planar rooted tree we orient the edges  in direction from the root to the leaves.
We assume that the tree is reduced, i.e. from each vertex which is not a leaf there are at least two outcomming edges.
Then we label the leaves with the elements of $Y$ and if a vertex is with $n$ outcomming edges
we label it with an $n$-ary operation $\nu_{ni}$.
We call such trees $\Omega$-trees with labeled leaves.
There is a one-to-one correspondence between the $\Omega$-monomials and the $\Omega$-trees with labeled leaves.
For example, if $Y=X=\{x_1,x_2,\ldots\}$, then the monomial
\[
\nu_{31}(\nu_{23}(x_1,x_1),x_3,\nu_{32}(x_2,x_1,x_4))
\]
corresponds to the following tree:
\[
 \xymatrix{ {\bullet}_{x_1}\ar@{-}[dr]& {\bullet}_{x_1}\ar@{-}[d]& {\bullet}_{x_3}\ar@{-}[dd]
 & {\bullet}_{x_2}\ar@{-}[d]& {\bullet}_{x_1}\ar@{-}[dl]& {\bullet}_{x_4}\ar@{-}[dll]\\
& {\bullet}_{\nu_{23}}\ar@{-}[dr] & &{\bullet}_{\nu_{32}}\ar@{-}[dl] & & \\
& & {\bullet}_{\nu_{31}}
\\}
\]
\begin{center}
Fig. 3
\end{center}
The set of $\Omega$-trees with labeled leaves inherits the natural grading of the free $\Omega$-magma $\{Y\}_{\Omega}$:
\[
\deg(\nu_{ni}(u_1,\ldots,u_n))=\sum_{k=1}^n\deg(u_k).
\]
The following proposition describes the generating function of the free $\Omega$-magma $\{Y\}_{\Omega}$
and the Hilbert series of the free $\Omega$-algebra $K\{Y\}_{\Omega}$.

\begin{proposition}\label{Hilbert series of free Omega algebra}
Let
\[
p(t)=\sum_{n\geq 2}p_ny^n=\sum_{n\geq 2}\vert\Omega_n\vert t^n
\]
be the generating function of the set of operations $\Omega=\Omega_2\cup\Omega_3\cup\cdots$.

{\rm (i)} When $Y=\{x\}$ consists of one element,
then the generating function of the free $\Omega$-magma $\{x\}_{\Omega}$ (and the Hilbert series of the free $\Omega$-algebra $K\{x\}_{\Omega}$)
\[
g(\{x\}_{\Omega},t)=H(K\{x\}_{\Omega},t)=\sum_{n\geq 1}\vert\Omega_n\vert t^n
\]
is the only solution $z=f(t)$ of the equation $p(z)-z+t=0$ which satisfies the condition $f(0)=0$.

{\rm (ii)} In the general case, if
\[
Y=Y^{(1)}\cup Y^{(2)}\cup\cdots, \text{ where }Y^{(k)}=\{y\in Y\mid \deg(y)=k\},
\]
and
\[
a(t)=\sum_{k\geq 1}\vert Y^{(k)}\vert t^k
\]
is the generating function of the graded set $Y$, then
\[
z=f(t)=g(\{Y\}_{\Omega},t)=H(K\{Y\}_{\Omega},t)
\]
is the solution of the equation
$p(z)-z+a(t)=0$ satisfying the condition $f(0)=0$.
\end{proposition}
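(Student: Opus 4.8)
The plan is to read off a functional equation directly from the recursive definition of the free $\Omega$-magma and then invoke uniqueness of formal power series solutions. Since $\{Y\}_{\Omega}$ is a $K$-basis of the free $\Omega$-algebra $K\{Y\}_{\Omega}$ and the grading on the algebra is the one carried by $\{Y\}_{\Omega}$, we have $H(K\{Y\}_{\Omega},t)=g(\{Y\}_{\Omega},t)$, so it is enough to compute the magma generating function; part (i) is the special case $Y=\{x\}$, $\deg(x)=1$, where $a(t)=t$. First I would check that $f(t):=g(\{Y\}_{\Omega},t)$ is a genuine formal power series, i.e. that for each $m$ there are only finitely many $\Omega$-monomials of degree $m$. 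Such a monomial is a reduced planar rooted tree whose leaf labels lie in $Y$ and have total degree $m$; since each leaf label has degree $\ge 1$, the tree has at most $m$ leaves, hence at most $m-1$ internal vertices, and an internal vertex has at most $m$ children. There are finitely many such trees, finitely many leaf-labelings (each $Y^{(k)}$ with $k\le m$ being finite, which is also what makes $a(t)$ a power series), and finitely many admissible operation-labels $\nu_{ni}$ at each internal vertex (each $\Omega_n$ being finite); so the coefficient of $t^m$ in $f(t)$ is finite.

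Next I would use the defining disjoint decomposition of the free magma: every $u\in\{Y\}_{\Omega}$ is either a variable $y\in Y$, or is \emph{uniquely} of the form $\nu_{ni}(u_1,\dots,u_n)$ with $n\ge 2$, $1\le i\le p_n$, and $u_1,\dots,u_n\in\{Y\}_{\Omega}$ --- uniqueness of this factorization being precisely the freeness of $\{Y\}_{\Omega}$. Because $\deg\nu_{ni}(u_1,\dots,u_n)=\deg u_1+\cdots+\deg u_n$, for each $n$ and $i$ the map $(u_1,\dots,u_n)\mapsto\nu_{ni}(u_1,\dots,u_n)$ is a degree-preserving bijection from $n$-tuples of $\Omega$-monomials onto the corresponding part of $\{Y\}_{\Omega}$. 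Translating to generating functions, a disjoint union becomes a sum and an $n$-fold Cartesian product with additive degree contributes $f(t)^n$; summing over the $p_n=\vert\Omega_n\vert$ choices of $i$ and over $n\ge 2$ gives
\[
f(t)=a(t)+\sum_{n\ge 2}p_nf(t)^n=a(t)+p(f(t)),
\]
that is, $p(f(t))-f(t)+a(t)=0$, and with $a(t)=t$ this reads $p(f(t))-f(t)+t=0$. The infinite sum $\sum_{n\ge 2}p_nf(t)^n$ is a legitimate power series because $f(0)=0$, so $f(t)^n$ has order $\ge n$ and only finitely many terms contribute to each coefficient.

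It remains to see that this determines $f(t)$ uniquely among power series with $f(0)=0$. One route is the formal implicit function theorem: the series $P(z,t):=p(z)-z+a(t)$ has $P(0,0)=0$ (as $p$ and $a$ have no constant term) and $(\partial P/\partial z)(0,0)=-1\ne 0$, so there is a unique $z=f(t)\in K[[t]]$ with $f(0)=0$ and $P(f(t),t)=0$. Equivalently, comparing coefficients of $t^m$ in $f=a+p(f)$: the contribution of $p(f)$ to degree $m$ is a sum of products $p_nf_{i_1}\cdots f_{i_n}$ with $n\ge 2$ and $i_1+\cdots+i_n=m$, hence with every $i_j\le m-1$, so $f_m$ is forced once $f_1,\dots,f_{m-1}$ are known. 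Combining this uniqueness with the fact, established above, that $g(\{Y\}_{\Omega},t)$ does satisfy $P(z,t)=0$, we obtain the proposition. The one place that needs genuine attention is the finiteness bookkeeping --- well-definedness of $f(t)$ and legitimacy of the sum over arities --- since $\Omega$ has operations of every arity $\ge 2$ and $Y$ may be infinite; the rest is the same sum-and-product transcription of a free-algebra decomposition already seen in the Catalan example above.
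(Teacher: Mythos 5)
Your proof is correct, and it follows exactly the route the paper indicates (the paper states the proposition without proof, as the straightforward generalization of its Catalan-number computation for binary trees): the unique decomposition of an $\Omega$-monomial as a variable or as $\nu_{ni}(u_1,\dots,u_n)$ translates into $f=a+p(f)$, and uniqueness with $f(0)=0$ follows by coefficient induction or the formal implicit function theorem. Your finiteness bookkeeping for the graded components and for the sum over arities is the right thing to check and is done correctly.
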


The problem when the series $g(\{x\}_{\Omega},t)=H(K\{x\}_{\Omega},t)$ is algebraic and nonrational depending the properties
of the generating function $p(t)$ from Proposition \ref{Hilbert series of free Omega algebra} was studied in the forthcoming paper by
Drensky and Lalov \cite{DL}. As an immediate consequence of Proposition \ref{Hilbert series of free Omega algebra} we obtain:

\begin{corollary}\label{algebraic series for Omega-magma}
If $p(t)$ is a polynomial (with nonnegative integer coefficients), then $g(\{x\}_{\Omega},t)$ is an algebraic nonrational function.
\end{corollary}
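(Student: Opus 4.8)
The plan is to read off the defining algebraic equation of $f(t):=g(\{x\}_{\Omega},t)$ from Proposition~\ref{Hilbert series of free Omega algebra}(i) and then to exclude rationality by an elementary degree-and-divisibility argument. At the outset I would dispose of the degenerate case $\Omega=\emptyset$ (i.e. $p=0$), in which $f(t)=t$ is rational; so I read the statement with $\Omega$ nonempty, hence $p\neq 0$, throughout.

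By Proposition~\ref{Hilbert series of free Omega algebra}(i), $z=f(t)$ is the unique formal power series with $f(0)=0$ solving $F(t,z)=0$, where $F(t,z)=p(z)-z+t$. If $p$ is a polynomial with nonnegative integer coefficients, then $F\in{\mathbb Q}[t,z]$ is a nonzero polynomial, so $f(t)$ is algebraic over ${\mathbb Q}(t)$ by definition. Since every operation in $\Omega$ is at least binary, $p(z)=\sum_{n\geq 2}p_nz^n$ and thus $N:=\deg_z p\geq 2$; also $f\not\equiv 0$, since otherwise $F(t,0)=t\equiv 0$.

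To prove $f$ is not rational I would argue in two steps. First, $f$ is not a polynomial: if it were, then $f(0)=0$ together with $f\not\equiv 0$ gives $m:=\deg f\geq 1$, and in $p(f(t))-f(t)+t=0$ the summand $p_Nf(t)^N$ has degree $Nm\geq 2m>m\geq\deg(-f(t)+t)$, so its leading coefficient $p_N f_m^N\neq 0$ cannot be cancelled (the remaining summands $p_nf(t)^n$ of $p(f(t))$ have strictly smaller degree $nm$) --- a contradiction. Second, suppose $f=u/v$ with $u,v\in{\mathbb Q}[t]$ coprime; clearing denominators in $F(t,u/v)=0$ yields the polynomial identity
\[
\sum_{n=2}^{N}p_nu^nv^{N-n}-uv^{N-1}+tv^{N}=0,
\]
so that
\[
p_Nu^N=uv^{N-1}-tv^{N}-\sum_{n=2}^{N-1}p_nu^nv^{N-n}.
\]
Every term on the right-hand side is divisible by $v$, hence $v\mid p_Nu^N$; since $u$ and $v$ are coprime and $p_N$ is a nonzero constant, $v$ is constant and $f$ is a polynomial, contradicting the first step. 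Therefore $f(t)$ is algebraic and nonrational.

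I expect the routine bookkeeping in the last two steps to be the only thing to watch --- in particular the boundary cases $N=2$ (when the displayed sum $\sum_{n=2}^{N-1}$ is empty) and $m=1$ (when $\deg(-f(t)+t)\leq 1=m$), both of which are still covered by the stated inequalities. For a more structural variant one can instead invoke singularity analysis: $f$ is analytic at $0$ because $\partial F/\partial z(0,0)=p'(0)-1=-1\neq 0$, and by Pringsheim its radius of convergence is $\rho=\tau-p(\tau)\in(0,\infty)$, where $\tau>0$ is the least root of $p'(\tau)=1$; since $\partial^{2}F/\partial z^{2}=p''(\tau)>0$ there, $f$ has a square-root branch point at $\rho$, which no rational function has. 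The elementary argument above needs no analysis, so I would present that one.
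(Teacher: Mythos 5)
Your argument is correct and follows exactly the route the paper intends: the paper states the corollary as an immediate consequence of Proposition \ref{Hilbert series of free Omega algebra}(i), reading off the equation $p(z)-z+t=0$ for $z=g(\{x\}_{\Omega},t)$, which gives algebraicity at once. Your elementary degree-and-divisibility argument (not a polynomial because $\deg_z p\geq 2$ forces an uncancelled leading term; not a proper rational function because the denominator would have to divide $p_N u^N$) correctly supplies the nonrationality details that the paper leaves unstated, including the harmless degenerate case $\Omega=\emptyset$.
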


Under some mild conditions the same conclusion holds when $p(t)$ is a rational function. The following remark is based on arguments from \cite{DL}.

\begin{remark}\label{algebraic p(t)}
Let the function $p(t)$ from Proposition \ref{Hilbert series of free Omega algebra} be algebraic and let
$b(t,p(t))=0$ for some polynomial $b(t,z)\in {\mathbb Q}[t,z]$. Hence $g(\{x\}_{\Omega},t)$ is equal to the solution $z=f(t)$ of the equation
$b(z,p(z))=b(f(t),p(f(t)))=0$. Since $p(f(t))=f(t)-t$, we obtain that $b(f(t),f(t)-t)=0$. Hence when the function $p(t)$ is algebraic then
this gives an algorithm which has as an input the polynomial equation $b(t,z)=0$ with coefficients in ${\mathbb Q}[t]$
satisfied by $p(t)$ and as an output the polynomial equation $b(z,z-t)=0$,
again with coefficients in ${\mathbb Q}[t]$, satisfied by $g(\{x\}_{\Omega},t)$.
\end{remark}

\begin{remark}\label{the case of X instead of x}
Up till now in this section we start with an algebraic series with nonnegative integer coefficients and obtain an algebraic equation
satisfied by $g(\{x\}_{\Omega},t)$. Then we want to obtain conditions which guarantee that the series $g(\{x\}_{\Omega},t)$ is not rational.
We can apply a similar strategy working with the free $\Omega$-magma $\{Y\}_{\Omega}$ with larger graded generating sets $Y$.
Depending on the properties of the generating function $a(t)$ of the set $Y$ from Proposition \ref{Hilbert series of free Omega algebra} (ii)
we can handle the following three cases:

(1) Both $p(t)$ and $a(t)$ are polynomials in ${\mathbb Q}[t]$.
Then $g(\{Y\}_{\Omega},t)$ is equal to the solution $z=f(t)$ of the equation $p(z)-z+a(t)=0$ with $f(0)=0$.

(2) Let $p(t)\in {\mathbb Q}[t]$ be a polynomial
and let $a(t)$ be algebraic satisfying the polynomial equation $q(t,a(t))=0$, $q(t,z)\in{\mathbb Q}[t,z]$.
Then $g(\{Y\}_{\Omega},t)$ is the solution $z=f(a(t))$ of the equation $p(z)-z+a(t)=0$. Replacing
$a(t)=f(a(t))-p(f(a(t)))$ in $q(t,a(t))=0$ we obtain that $z=f(a(t))$ is a solution of the polynomial equation
$q(t,z-p(z))=0$, and $q(t,z-p(z))\in {\mathbb Q}[t,z]$.

(3) Both $p(t)$ and $a(t)$ are algebraic functions and $b(t,p(t))=q(t,a(t))=0$ for some polynomials $b(t,z),q(t,z)\in{\mathbb Q}[t,z]$.
Applying the arguments in Remark \ref{algebraic p(t)} we obtain that $g(\{x\}_{\Omega},t)=f(t)$ is a solution $u$ of the polynomial equation
$b(u,u-t)=0$. Hence $g(\{Y\}_{\Omega},t)=f(a(t))$ is a solution $u$ of the equation $b(u,u-a(t))=0$. Since $q(t,a(t))=0$,
the polynomial equations $b(u,u-z)=0$ and $q(t,z)=0$ have a common solution $z=a(t)$. Hence the resultant
$r(t,u)=\text{Res}_z(q(t,z),b(u,u-z))$ of the polynomials $q(t,z),b(u,u-z)\in({\mathbb Q}[t,u])[z]$ is equal to 0 which gives
a polynomial equation $r(t,u)=0$ with a solution $u=f(a(t))$.
\end{remark}

A variety of algebraic systems satisfies the {\it Schreier property} if the subsystems of the free systems are also free.
This holds for example for free groups (the {\it Nielsen-Schreier theorem} \cite{Nie, Sch}, two different proofs can be found in \cite{KM, MKS}),
for free Lie algebras (the {\it Shirshov theorem} \cite{S1}), for free nonassociative and free $\Omega$-algebras ({\it theorems of Kurosh} \cite{Ku1, Ku2}).
It is folklorely known that {\it any $\Omega$-submagma of the free $\Omega$-magma $\{Y\}_{\Omega}$ is also free}.
A proof can be found e.g. in Feigelstock \cite{Fe}. (This can be derived also from the theorems of Kurosh \cite{Ku1, Ku2}.)

We shall give an example considered in Drensky and Holtkamp \cite{DH}.
The subset $S$ of the magma $\{x\}$ consisting of all nonassociative monomials of even degree is closed under multiplication and hence forms
a free submagma of $\{x\}$. It is easy to see that the set of free generators of $S$ consists of all monomials of the form $u=u_1u_2$,
where both $u_1$ and $u_2$ are of odd degree. Let
\[
a(t)=\sum_{n\geq 1}a_{2n}t^{2n}
\]
be the generating function of the free generating set of $S$.
The generating function $g(S,t)$ of $S$ is expressed in terms of the generating function of the Catalan numbers
\[
g(S,t)=\sum_{n\geq 2}c_{2n}t^{2n}=\frac{1}{2}(c(t)+c(-t)).
\]
From the equation
\[
g^2(S,t)-g(S,t)+a(t)= c^2(a(t))-c(a(t))+a(t)=0
\]
we obtain that $a(t)$ satisfies the quadratic equation
\[
4a^2(t)-a(t)+t^2=0,
\quad
a(t)=\frac{1}{4}c(4t^2),\quad a_{2n}=4^{n-1}c_n.
\]
Applying the Stirling formula for $n!$ after some calculations we obtain
\[
\frac{a_{2n}}{c_{2n}}\approx \frac{1}{2}\sqrt{\frac{2n-1}{n-1}},\quad \lim_{n\to\infty}\frac{a_{2n}}{c_{2n}}=\frac{\sqrt{2}}{2}\approx 0.707105.
\]
Every monomial $u$ of even degree in $\{x\}$ is a product of two submonomials $u_1$ and $u_2$
where both $u_1$ and $u_2$ are either of even or of odd degree.
The above calculations show that the monomials $u=u_1u_2$ with $u_1$ and $u_2$ of odd degree are much more that those of even degree.
This can be translated in the language of planar binary rooted trees with even number of leaves. Every such tree has two branches
which both are of the same parity of the number of leaves.
\begin{center}
\includegraphics[width=8cm]{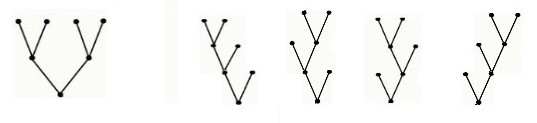}\\
even branches\hskip2.5truecm odd branches\phantom{xxxxxxxxxx}\\
Fig. 4
\end{center}
\medskip
\noindent The trees in Fig. 4 correspond, respectively, to the monomials
\[
(xx)(xx)\text{( even branches), }((xx)x)x,(x(xx))x,x((xx)x),x(x(xx))\text{ (odd branches).}
\]
It turns out that the trees with branches with odd number of leaves are more
than 70 {\%} of all trees with even number of leaves which, at least for the authors of \cite{DH}, was quite surprising.

The above observation was the starting point of the project of Drensky and Lalov \cite{DL}. One of the first results there was the following.

\begin{theorem}\label{submagmas of any Omega-magma}
Let $\Omega$ be a set of operations with algebraic generating function $p(t)$ and let $\{x\}_{\Omega}$ be the one-generated free $\Omega$-magma.
For a fixed positive integer $s$ consider the $\Omega$-submagma $S_{\Omega}$ consisting of all monomials of degree divisible by $s$.
Then the generating function $a(t)$ of the free generating set of $S_{\Omega}$ is algebraic.
\end{theorem}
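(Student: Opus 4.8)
The plan is to combine the Schreier property for free $\Omega$-magmas with the functional equation of Proposition~\ref{Hilbert series of free Omega algebra} and with the fact that extracting from an algebraic power series the subsequence of coefficients whose index is divisible by $s$ again yields an algebraic series. First I would note that $S_\Omega$ really is an $\Omega$-submagma of $\{x\}_\Omega$: if $\deg(u_1),\dots,\deg(u_n)$ are all divisible by $s$ then $\deg(\nu_{ni}(u_1,\dots,u_n))=\deg(u_1)+\cdots+\deg(u_n)$ is divisible by $s$ as well, so $S_\Omega$ is closed under every $\nu_{ni}$. By the folklore fact that submagmas of free $\Omega$-magmas are free (the theorems of Kurosh \cite{Ku1,Ku2}, or Feigelstock \cite{Fe}), $S_\Omega=\{Y\}_\Omega$ is a free $\Omega$-magma on a generating set $Y$, graded by the degree inherited from $\{x\}_\Omega$; write $a(t)$ for its generating function. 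The degree of an element of $S_\Omega$ is determined, whether we view $S_\Omega$ as a submagma of $\{x\}_\Omega$ or as the free magma $\{Y\}_\Omega$, by the same recursion $\deg(\nu_{ni}(\cdots))=\sum\deg(\cdot)$ with the same values on the generators, so the two gradings coincide and Proposition~\ref{Hilbert series of free Omega algebra}(ii) applies to $S_\Omega$. It gives $p\big(g(S_\Omega,t)\big)-g(S_\Omega,t)+a(t)=0$, that is,
\[
a(t)=g(S_\Omega,t)-p\big(g(S_\Omega,t)\big).
\]

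Since $g(S_\Omega,t)$ lies in $t^{s}\mathbb Q[[t]]$ and vanishes at $0$, while $p(u)$ lies in $u^{2}\mathbb Q[[u]]$ and is algebraic, the composition $p\big(g(S_\Omega,t)\big)$ is a well-defined power series, algebraic over $\mathbb Q(t)$, and $a(t)$ is a difference of algebraic power series; hence it is enough to prove that $g(S_\Omega,t)$ is algebraic. Set $f(t)=g(\{x\}_\Omega,t)$. By Proposition~\ref{Hilbert series of free Omega algebra}(i) together with Remark~\ref{algebraic p(t)}, $f$ is algebraic over $\mathbb Q(t)$: if $b(t,z)\in\mathbb Q[t,z]$ satisfies $b(t,p(t))=0$, then $b\big(f(t),f(t)-t\big)=0$. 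Because $S_\Omega$ consists precisely of the monomials of degree divisible by $s$, $g(S_\Omega,t)$ is the $s$-section $\sum_{s\mid n}f_{n}t^{n}$ of $f$; with $\zeta$ a primitive $s$-th root of unity this equals $\frac{1}{s}\sum_{j=0}^{s-1}f(\zeta^{j}t)$. Each $f(\zeta^{j}t)$ is algebraic over $\overline{\mathbb Q}(t)$ — substitute $t\mapsto\zeta^{j}t$ in the polynomial relation satisfied by $f$ — so $g(S_\Omega,t)$ is algebraic over $\overline{\mathbb Q}(t)$, and therefore over $\mathbb Q(t)$, since any power series algebraic over $\overline{\mathbb Q}(t)$ is algebraic over $F(t)$ for the number field $F$ generated by the finitely many coefficients of one witnessing polynomial, and $F(t)/\mathbb Q(t)$ is finite. (Alternatively, $g(S_\Omega,t)$ is the Hadamard product of the algebraic series $f(t)$ with the rational series $t^{s}/(1-t^{s})$, and such a Hadamard product is algebraic.) This completes the proof.

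I expect the one point requiring a genuine (if small) argument to be the descent in the last step: that passing to an arithmetic subsequence of the coefficients keeps the series algebraic over the rational field $\mathbb Q(t)$ and not merely over $\overline{\mathbb Q}(t)$. Everything else is bookkeeping — chiefly verifying that the grading on $S_\Omega$ is the one expected by Proposition~\ref{Hilbert series of free Omega algebra}(ii) and reading that proposition ``backwards'' to recover $a(t)$ from $g(S_\Omega,t)$ — so that, modulo this, the statement follows directly from the results already in the excerpt.
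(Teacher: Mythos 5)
Your proof is correct and follows essentially the approach the paper intends: the survey states this theorem without proof (deferring to the forthcoming Drensky--Lalov paper), but the machinery it develops around it --- freeness of $\Omega$-submagmas, Proposition \ref{Hilbert series of free Omega algebra}(ii) read backwards as $a(t)=g(S_{\Omega},t)-p\bigl(g(S_{\Omega},t)\bigr)$, and algebraicity of the $s$-section of the algebraic series $g(\{x\}_{\Omega},t)$ via the root-of-unity filter --- is exactly what you use, generalizing the $s=2$ Catalan computation given in the text. Your additional care about the compatibility of the two gradings and the descent from $\overline{\mathbb Q}(t)$ to $\mathbb Q(t)$ correctly fills in the only points left implicit.
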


The following lemma answers the problem when the set $S$ is nonempty.

\begin{lemma}\label{nonempty submagma}
Let the number of the $n$-ary operations in $\Omega$ is equal to $p_n$ and let $d$ be the be the greatest common divisor
of all numbers $n-1$, for which $p_n$ is different from $0$. Then $S_{\Omega}$ is nonempty if and only if $d$ and $s$ are relatively prime.
Moreover, the set $S_{\Omega}$ is either empty or is infinite.
\end{lemma}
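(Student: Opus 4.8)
The plan is to determine exactly which degrees occur among the monomials of the one-generated free $\Omega$-magma $\{x\}_{\Omega}$, and then to intersect that set of degrees with the multiples of $s$. Throughout I assume $\Omega\neq\emptyset$, i.e. $p_n\neq 0$ for some $n$ (otherwise $\{x\}_{\Omega}=\{x\}$ and the statement degenerates), and I fix a \emph{finite} set of arities $a_1,\dots,a_r$ with $p_{a_i}\neq 0$ for all $i$ and $\gcd(a_1-1,\dots,a_r-1)=d$; such a finite set exists because the gcd of a set of positive integers is already attained on a finite subset, and each $a_i-1\geq 1$ since operations have arity at least $2$.

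First I would prove the necessary condition: every monomial $u\in\{x\}_{\Omega}$ satisfies $\deg(u)\equiv 1\pmod d$. This is a structural induction — the generator has degree $1$, and if $u=\nu_{ni}(u_1,\dots,u_n)$ with $p_n\neq 0$ then $\deg(u)=\deg(u_1)+\cdots+\deg(u_n)\equiv n\pmod d$ by the inductive hypothesis, while $n-1\equiv 0\pmod d$ by the definition of $d$. Consequently, if $e:=\gcd(d,s)>1$, a monomial of degree divisible by $s$ would have degree simultaneously $\equiv 0$ and $\equiv 1$ modulo $e$, which is impossible; hence $S_{\Omega}=\emptyset$ whenever $d$ and $s$ are not coprime.

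Next, for the converse I would assume $\gcd(d,s)=1$ and analyze the set $D=\{\deg(u)\mid u\in\{x\}_{\Omega}\}$. One has $1\in D$, and $D$ is closed under adding each $a_i-1$: if $\deg(u)=m$ then $\nu_{a_i,1}(u,x,\dots,x)$, with $a_i-1$ copies of $x$, is a monomial of degree $m+(a_i-1)$. Therefore $D\supseteq 1+\Lambda$, where $\Lambda$ is the numerical semigroup generated by $a_1-1,\dots,a_r-1$. Dividing these generators by $d$ yields nonnegative integers of gcd $1$, so by the classical Chicken McNugget (Frobenius) theorem their nonnegative-integer span is cofinite in ${\mathbb N}_0$; multiplying back by $d$, the set $\Lambda$ contains every sufficiently large multiple of $d$, and hence $D$ contains every sufficiently large $m$ with $m\equiv 1\pmod d$. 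Since $\gcd(d,s)=1$, the Chinese Remainder Theorem gives a common solution of $m\equiv 0\pmod s$ and $m\equiv 1\pmod d$, and the solutions form an arithmetic progression with common difference $ds$; infinitely many of its terms are large enough to lie in $D$. Each such degree is realized by some monomial, which then lies in $S_{\Omega}$, so $S_{\Omega}$ is nonempty and, containing monomials of infinitely many distinct degrees, infinite. The remaining half of the dichotomy is then immediate: if $u\in S_{\Omega}$ and $p_n\neq 0$, then $\nu_{n,1}(u,\dots,u)\in S_{\Omega}$ has degree $n\deg(u)>\deg(u)$, and iterating produces infinitely many elements.

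The only step with genuine content is the reduction to a numerical-semigroup statement: recognizing that $D$ is closed under adding the differences $a_i-1$ converts the question into the Frobenius/Chicken McNugget theorem combined with the Chinese Remainder Theorem. The congruence obstruction (the ``only if'' direction) and the ``empty or infinite'' remark are routine once this is in place.
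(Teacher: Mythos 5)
Your proof is correct. The paper itself states this lemma without proof (it is quoted from the forthcoming work of Drensky and Lalov \cite{DL}), so there is no argument in the text to compare against; your route --- the structural induction giving $\deg(u)\equiv 1 \pmod d$ as the obstruction, then realizing all sufficiently large degrees $\equiv 1\pmod d$ via the subsemigroup $1+\Lambda$ with $\Lambda$ generated by the differences $a_i-1$, and finishing with the Frobenius cofiniteness statement and the Chinese Remainder Theorem --- is complete and sound, including the flagged reduction to a finite set of arities attaining the gcd and the standing assumption $\Omega\neq\emptyset$ needed for the ``empty or infinite'' clause.
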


One of the main problems in this direction is the following.

\begin{problem}\label{problem for submagmas of any Omega-magma}
If in the notation of Theorem \ref{submagmas of any Omega-magma}
we known the polynomial equation $b(t,z)\in {\mathbb Q}[t,z]$ satisfied by $p(t)$,
how to find the equation satisfied by the generating function $a(t)$ of the free generating set of $S_{\Omega}$?
\end{problem}

In \cite{DL} we have found an algorithm which solves this problem. In particular, we have the following statement which gives more examples
of algebraic power series with nonnegative integer coefficients.

\begin{theorem}
If the generating function  $p(t)$ of the operations in $\Omega$ is a polynomial and the set $S_{\Omega}$ is nonempty,
then the generating function $a(t)$ of the free set of generators of $S_{\Omega}$ is algebraic and nonrational.
\end{theorem}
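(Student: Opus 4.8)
The plan is to combine the free‑magma structure of $S_{\Omega}$ with a singularity analysis of the relevant generating functions. Write $\phi(z)=z-p(z)$; since the operations in $\Omega$ have arity $\ge 2$ we have $\phi(0)=0$, $\phi'(0)=1$, and $\phi''=-p''<0$ on $(0,\infty)$ (here $\Omega\ne\varnothing$, as otherwise $S_{\Omega}=\varnothing$). We may assume $s\ge 2$; for $s=1$ one has $S_{\Omega}=\{x\}_{\Omega}$ and $a(t)=t$. Since $\Omega$‑submagmas of free $\Omega$‑magmas are free, $S_{\Omega}$ is the free $\Omega$‑magma on its graded set $Y$ of free generators, whose generating function is $a(t)$; hence by Proposition \ref{Hilbert series of free Omega algebra}(ii) the series $G(t):=g(S_{\Omega},t)$ is the solution of $\phi(z)=a(t)$ with $G(0)=0$, so that
\[
a(t)=\phi\bigl(G(t)\bigr)=G(t)-p\bigl(G(t)\bigr).
\]
On the other hand $S_{\Omega}$ consists precisely of the monomials of $\{x\}_{\Omega}$ of degree divisible by $s$, so, writing $F(t):=g(\{x\}_{\Omega},t)=\sum_{n}f_{n}t^{n}$, the series $G$ is the $s$‑section of $F$:
\[
G(t)=\sum_{s\mid n}f_{n}t^{\,n}=\tfrac1s\sum_{j=0}^{s-1}F(\zeta^{j}t),\qquad \zeta=e^{2\pi i/s}.
\]
Algebraicity of $a$ is immediate from Theorem \ref{submagmas of any Omega-magma}, since a polynomial $p$ is in particular algebraic; only non‑rationality has to be proved.

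First I would describe the dominant singularity of $F$. By Proposition \ref{Hilbert series of free Omega algebra}(i), $F$ is the local inverse of $\phi$ at $0$. Since $\phi'=1-p'$ is strictly decreasing on $(0,\infty)$ (because $p''>0$ there), with $\phi'(0)=1$ and $\phi'\to-\infty$, it has a unique zero $\tau>0$ and $\phi$ is strictly increasing on $[0,\tau]$; thus $R:=\phi(\tau)=\tau-p(\tau)>0$ is the radius of convergence of $F$, $F(R)=\tau=\sum_{n}f_{n}R^{n}<\infty$, and because $\phi'(\tau)=0\ne\phi''(\tau)=-p''(\tau)$, inverting $R-\phi(w)=\tfrac12 p''(\tau)(w-\tau)^{2}+\cdots$ yields a square‑root branch point,
\[
F(z)=\tau-\gamma\sqrt{R-z}+O(R-z),\qquad \gamma=\sqrt{2/p''(\tau)}>0,\quad z\to R^{-}.
\]
(This also re‑proves that $F$ is algebraic and not rational, cf. Corollary \ref{algebraic series for Omega-magma}.)

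Next I would transport this through the section. By Lemma \ref{nonempty submagma} the hypothesis $S_{\Omega}\ne\varnothing$ gives $\gcd(s,d)=1$, where $d=\gcd\{n-1:p_{n}\ne 0\}$; moreover the support of $F$ is contained in, and cofinal in, $\{n:n\equiv 1\ (\mathrm{mod}\ d)\}$, so its period is exactly $d$, and consequently (a classical fact for power series with non‑negative coefficients) the singularities of $F$ on $\{|z|=R\}$ are contained in $\{R\,e^{2\pi ik/d}:0\le k<d\}$. As $\gcd(s,d)=1$, for $1\le j\le s-1$ the point $\zeta^{j}R$ is not of this form, hence is a regular point of $F$; therefore in the section formula only the term $j=0$ is singular at $t=R$, and $G$ inherits a square‑root branch point there:
\[
G(t)=\beta-\tfrac{\gamma}{s}\sqrt{R-t}+O(R-t),\qquad \beta:=G(R)=\sum_{s\mid n}f_{n}R^{n}.
\]
(In particular $G$ has radius of convergence $R$, since $\sum_{s\mid n}f_{n}t^{n}$ is a subseries of $F$.) The crux is that $\beta<\tau$: because $f_{1}=1$ (the only monomial of degree $1$ is $x$) and $s\nmid 1$, the term $f_{1}R=R>0$ occurs in $\tau=\sum_{n}f_{n}R^{n}$ but not in $\beta$; also $\beta>0$ because some $f_{sm}>0$.

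Finally, substituting the expansion of $G$ into the polynomial $\phi$ gives, as $t\to R^{-}$,
\[
a(t)=\phi\bigl(G(t)\bigr)=\phi(\beta)-\phi'(\beta)\,\tfrac{\gamma}{s}\sqrt{R-t}+O(R-t),
\]
and $\phi'(\beta)=1-p'(\beta)>0$ since $0<\beta<\tau$ and $p'$ is strictly increasing on $[0,\infty)$ with $p'(\tau)=1$. Thus the coefficient of $\sqrt{R-t}$ is nonzero, so $a$ has a genuine branch‑point singularity at $t=R$, even though $a(R^{-})=\phi(\beta)$ is finite. But a rational power series with non‑negative coefficients and radius of convergence $R$ must, by Pringsheim's theorem, have a pole at $t=R$, where it tends to $+\infty$ — contradicting the finiteness of $a(R^{-})$. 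Hence $a(t)$ is not rational, which together with its algebraicity proves the theorem. The step I expect to be decisive is the inequality $\beta<\tau$, i.e. $\phi'(\beta)\ne 0$, which makes the branch point survive the application of $\phi$ and which is exactly where the assumption $s\ge 2$ enters (the degree‑$1$ generator $x$ contributes to $\tau$ but not to $\beta$); the other non‑formal ingredient is the location of the circle singularities of $F$, needed — via $\gcd(s,d)=1$, i.e. $S_{\Omega}\ne\varnothing$ — so that the spurious terms $F(\zeta^{j}t)$, $j\ne 0$, do not disturb the branch point of $G$ at $t=R$. Everything else (the explicit square‑root expansion of $F$, the propagation of Puiseux expansions through the rational operation $\phi$ and the section, the Pringsheim argument) is routine.
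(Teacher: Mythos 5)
Your argument is sound in substance, but be aware that it is a genuinely different route from the source: the survey gives no proof of this theorem at all --- the statement is quoted from the forthcoming paper \cite{DL}, whose method (sketched in Remarks \ref{algebraic p(t)} and \ref{the case of X instead of x} and in Problem \ref{problem for submagmas of any Omega-magma}) is algebraic and algorithmic: one manipulates the defining polynomial relations (substitutions of the form $b(z,z-t)=0$, resultants) to produce an explicit polynomial equation satisfied by $a(t)$, from which nonrationality is extracted; this is also what makes the two worked examples after the theorem possible. Your proof instead runs a singularity analysis: $a=\phi(G)$ with $\phi(z)=z-p(z)$, $G$ the $s$-section of $F=\phi^{-1}$; the square-root singularity of $F$ at $R=\phi(\tau)$ survives the sectioning because $\gcd(s,d)=1$ (Lemma \ref{nonempty submagma}) keeps the points $\zeta^jR$, $1\le j\le s-1$, regular, and it survives the application of $\phi$ because $\beta=G(R)<\tau$ (the degree-one element $x$ contributes to $\tau$ but not to $\beta$ once $s\ge 2$), so $\phi'(\beta)\neq 0$; a bounded branch point at the radius of convergence is incompatible with rationality. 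This buys a uniform conceptual explanation (and asymptotics), and it localizes precisely where $S_\Omega\neq\varnothing$ and $s\ge 2$ enter, whereas the \cite{DL} approach additionally delivers the explicit equation for $a(t)$. (As you note, the theorem must be read with $s\ge 2$: for $s=1$ one has $a(t)=t$.)

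One step needs its justification repaired. You assert that, because the support of $F$ has period $d$, ``a classical fact for power series with non-negative coefficients'' confines the singularities of $F$ on $\{|z|=R\}$ to $\{Re^{2\pi ik/d}\}$. As stated this is false: $\sum_{n\ge 1}z^n+\sum_{n\ge 1}z^{n!}$ has nonnegative coefficients, support of period $1$, radius $1$, and a natural boundary. What is true --- and is classical for the inverse-function schema you are in --- uses the functional equation $\phi(F(z))=z$: since the differences of the support of $F$ have greatest common divisor $d$, the strict triangle inequality (daffodil-type argument) gives $|F(z_0)|<\tau$ whenever $|z_0|=R$ and $z_0^d\neq R^d$; the zeros of $\phi'=1-p'$ in $|y|\le\tau$ lie only at the points $\tau e^{2\pi ik/d}$, so $\phi'(F(z_0))\neq 0$ and the implicit function theorem continues $F$ analytically past $z_0$. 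With this substitution for your ``classical fact'' the proof goes through as you wrote it.
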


\begin{example}
Let $\Omega$ consist of one binary operation only and let $s=3$.
This corresponds to the set $S$ of binary planar rooted trees with number of leaves divisible by 3.
Applying the algorithm in \cite{DL} we obtain that the generating function $a(t)$ of the free generating set of $S$ satisfies
\[
729a^4( t)-486a^3( t)+108a^2(t)^2-(64t^3+8)a(t)+16t^3=0.
\]
Solving this equation we obtain four possibilities for $a(t)$.
We expand each of them in series and since only one solution has nonnegative coefficients of the first powers,
we obtain the value of the desired generating function:
\[
a(t) = \frac{1}{6} - \frac{1}{18} \sqrt{1 + 4 t + 16 t^2}
- \frac{\sqrt{ 1 - 2 t - 8 t^2 + \frac{1-64t^3}{\sqrt{1 + 4 t + 16 t^2}} }}{9 \sqrt{2}}
\]
\[
=2 t^3 + 38 t^6 + 1262 t^9 + 51302 t^{12} + 2319176 t^{15} +
 111964106 t^{18} + 5652760340 t^{21}+\cdots.
 \]
\end{example}

\begin{example}\label{super-Catalan numbers mod 2}
Let $\Omega=\Omega_2\cup\Omega_3\cup\cdots$ and let $\vert\Omega\vert=1$ for all $n=2,3,\ldots$.
Its generating function is
\[
p(t)=\frac{t^2}{1-t}.
\]
Then the one-generated free $\Omega$-magma can be identified
with the set of all planar rooted reduced trees and
the generating function of $\{x\}_{\Omega}$
is equal to the generating function of the super-Catalan numbers (see \cite[sequence A001003]{Sl}).
Let $S$ be the set of all monomials of even degree.
The calculations in \cite{DL} give that the generating function $a(t)$ of the set of free generators of $S$ satisfies thee equation
\[
36a^4(t)-12(t^2+1)a^3(t)+ (19t^2+1)a^2(t)+3t^2(t^2-1)a(t)+2t^4=0
\]
Only two of the solutions have nonnegative coefficients of the first few powers.
However, the correct solution is chosen taking into account the coefficient of the 4th power
and it is
\[
a(t))=\frac{1}{12} (1 + t^2) - \frac{1}{12} \sqrt{1 - 34 t^2 + t^4}
- \frac{\sqrt{ t^2 + t^4 - \frac{t^2-34t^4+t^6}{\sqrt{1 - 34 t^2 + t^4}}}}{6 \sqrt{2}}
\]
\[
=t^2 + 10 t^4 + 174 t^6 + 3730 t^8+ 89158 t^{10} + 2278938 t^{12} +
 60962718 t^{14} + 1685358882 t^{16} + \cdots.
\]
\end{example}

\section{Noncommutative invariant theory}\label{section invariant theory}

In this section we shall follow the traditions of classical invariant theory and shall work over the complex field $\mathbb C$
although most of our results are true for any field $K$ is of characteristic 0.
In classical invariant theory one considers the canonical action
of the general linear group $\text{GL}_d({\mathbb C})$ on the $d$-dimensional vector space $V_d$ with basis $\{v_1,\ldots,v_d\}$.
The algebra ${\mathbb C}[X_d]$ consists of the polynomial functions $f(X_d)=f(x_1,\ldots,x_d)$, where
\[
x_i(v)=\xi_i\text{ for }v=\xi_1v_1+\cdots+\xi_dv_d\in V_d,\xi_1,\ldots,\xi_d\in{\mathbb C}.
\]
The group $\text{GL}_d({\mathbb C})$ acts on ${\mathbb C}[X_d]$ by the rule
\[
g(f)(v)=f(g^{-1}(v)),\quad g\in\text{GL}_d({\mathbb C}),f\in{\mathbb C}[X_d],v\in V_d.
\]
If $G$ is a subgroup of $\text{GL}_d({\mathbb C})$, then the algebra ${\mathbb C}[X_d]^G$ of $G$-invariants consists of all
$f(X_d)\in{\mathbb C}[X_d]$ such that
\[
g(f)=f\text{ for all }g\in G.
\]
For a background on classical invariant theory see, e.g. some of the books
by Derksen and Kemper \cite{DeK}, Dolgachev \cite{Do} or Procesi \cite{Pr}.

One possible noncommutative generalization is to replace the polynomial algebra with the free associative algebra ${\mathbb C}\langle X_d\rangle$
under the natural restriction $d\geq 2$.
It is more convenient to assume that $\text{GL}_d({\mathbb C})$ acts canonically on the vector space
${\mathbb C}X_d$ with basis $X_d$ and to extend diagonally its action on ${\mathbb C}\langle X_d\rangle$ by the rule
\[
g(f(x_1,\ldots,x_d))=f(g(x_1),\ldots,g(x_d)),\quad g\in\text{GL}_d({\mathbb C}), f\in{\mathbb C}\langle X_d\rangle.
\]
Then, for a subgroup $G$ of $\text{GL}_d({\mathbb C})$ the algebra of $G$-invariants is
\[
{\mathbb C}\langle X_d\rangle^G=\{f(X_d)\in{\mathbb C}\langle X_d\rangle\mid g(f)=f\text{ for all }g\in G\}.
\]
The algebras of invariants in the commutative case have a lot of nice properties.
For example, the algebra ${\mathbb C}[X_d]^G$ is finitely generated for a large class of groups
including all reductive groups, when $G$  is a maximal unipotent subgroup of a reductive group
(see Had\v{z}iev \cite{Ha} or Grosshans \cite[Theorem 9.4]{Gr}),
and consequently when $G$ is a Borel subgroup of a reductive group.
Since the algebra ${\mathbb C}[X_d]^G$ is graded, the Hilbert-Serre theorem (Theorem \ref{Hilbert series of commutative algebras} (i))
gives that for such groups $G$ the Hilbert series $H({\mathbb C}[X_d]^G,t)$ is a rational function.
In this case the algebra ${\mathbb C}[X_d]^G$
is a homomorphic image of a polynomial algebra ${\mathbb C}[Y_p]$ modulo some ideal $I$.
But it is quite rare when the algebra ${\mathbb C}[X_d]^G$ is isomorphic to the polynomial algebra ${\mathbb C}[Y_p]$.
By the theorem of Shephard and Todd \cite{ShT} and Chevalley \cite{Che} {\it if $G$ is finite then ${\mathbb C}[X_d]^G\cong{\mathbb C}[X_d]$
if and only if $G$ is generated by pseudoreflections.}

The picture of invariant theory for the free algebra ${\mathbb C}\langle X_d\rangle$ is quite different.
The algebra ${\mathbb C}\langle X_d\rangle^G$ is very rarely finitely generated.

\begin{theorem}\label{rarely finitely generated}
{\rm (i) (Dicks and Formanek \cite{DiF} and Kharchenko \cite{Kh2})}
If $G$ is a finite group then ${\mathbb C}\langle X_d\rangle^G$ is finitely generated if and only if
$G$ is cyclic and acts on the vector space ${\mathbb C}X_d$ by scalar multiplication.

{\rm(ii) (Koryukin \cite{Kor})} Let $G$ be an arbitrary subgroup of $\text{\rm GL}_d({\mathbb C})$
and let ${\mathbb C}\langle X_d\rangle^G$ be finitely generated.
Assume that the vector space ${\mathbb C}X_d$ does not have a proper subspace ${\mathbb C}Y_e$, $Y_e=\{y_1,\ldots,y_e\}$, $e<d$,
such that ${\mathbb C}\langle X_d\rangle^G\subseteq {\mathbb C}\langle Y_e\rangle$.
Then $G$ is a finite and acts on ${\mathbb C}X_d$ by scalar multiplication.
\end{theorem}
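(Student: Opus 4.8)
I would treat the two parts separately and start with the easy implication of (i). If $G=\langle g\rangle$ is cyclic of order $n$ acting on $\mathbb{C}X_d$ by the scalar $\zeta$ (which is then a primitive $n$-th root of unity), then $g$ acts on $\mathbb{C}\langle X_d\rangle_m\cong(\mathbb{C}X_d)^{\otimes m}$ by the scalar $\zeta^{m}$, so $\mathbb{C}\langle X_d\rangle^{G}=\bigoplus_{n\mid m}\mathbb{C}\langle X_d\rangle_m$ is the $n$-th Veronese subalgebra. Chopping a monomial into consecutive blocks of length $n$ shows that this subalgebra is the free associative algebra on the $d^{\,n}$ monomials of degree $n$; in particular it is finitely generated, with Hilbert series $1/(1-d^{\,n}t^{\,n})$.

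For the forward implication in (i) the two main ingredients are: the theorem of Kharchenko \cite{Kh2} (see also \cite{DiF}) that for finite $G$ the algebra $\mathbb{C}\langle X_d\rangle^{G}$ is itself a free associative algebra on a homogeneous set of free generators, so that $H(\mathbb{C}\langle X_d\rangle^{G},t)=1/(1-a(t))$ where $a(t)\in\mathbb{N}_0[[t]]$, $a(0)=0$, records the number of free generators in each degree; and the Molien-type formula obtained by averaging $\dim\bigl((\mathbb{C}X_d)^{\otimes m}\bigr)^{G}$ over $G$,
\[
H(\mathbb{C}\langle X_d\rangle^{G},t)=\frac{1}{|G|}\sum_{g\in G}\frac{1}{1-\chi(g)t},\qquad \chi(g)=\mathrm{tr}\bigl(g\mid \mathbb{C}X_d\bigr).
\]
Together these say that $\mathbb{C}\langle X_d\rangle^{G}$ is finitely generated if and only if $1/H(\mathbb{C}\langle X_d\rangle^{G},t)$ is a polynomial. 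Clearing denominators in the Molien formula and comparing orders of vanishing, I would show that this forces $H(\mathbb{C}\langle X_d\rangle^{G},t)=1/\prod_{j=1}^{r}(1-\mu_jt)$, where $\mu_1,\dots,\mu_r$ are the distinct values taken by $\chi$ on $G$, together with the residue identities $m_l\prod_{j\neq l}(\mu_l-\mu_j)=|G|\,\mu_l^{\,r-1}$ for $m_l=\#\{g\in G:\chi(g)=\mu_l\}$. The crux is then to conclude that every $g\in G$ is a scalar matrix. Here I would exploit that the eigenvalues of a finite-order element are roots of unity, so $|\chi(g)|\le d$ with equality exactly for scalar $g$; that the radius of convergence of $H$ equals $1/d$, whence $1/d$ must be a root of $\prod_j(1-\mu_jt)$; and that the coefficients $\dim(\mathbb{C}\langle X_d\rangle^{G})_m=\frac{1}{|G|}\sum_g\chi(g)^m$ are nonnegative integers while the multiset $\{\chi(g)\}_{g\in G}$ is stable under $\mathrm{Gal}(\overline{\mathbb{Q}}/\mathbb{Q})$. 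Combining this dominant-singularity information with integrality and Galois-invariance should exclude any $\mu_j$ of modulus $<d$, so $G$ consists of scalars, hence is a finite subgroup of $\mathbb{C}^{*}$ and therefore cyclic. I expect this final elimination of the non-scalar trace values to be the main obstacle in (i).

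For (ii) I would replace $G$ by its Zariski closure $\overline G$ in $\mathrm{GL}_d(\mathbb{C})$, using that invariance of a fixed homogeneous element of $\mathbb{C}\langle X_d\rangle$ under a linear substitution is a Zariski-closed condition; thus $\mathbb{C}\langle X_d\rangle^{\overline G}=\mathbb{C}\langle X_d\rangle^{G}$, and $\overline G$ inherits the hypothesis of admitting no proper reduction to a smaller $\mathbb{C}\langle Y_e\rangle$. If $\overline G$ is finite, part (i) applies and forces $\overline G$ (hence $G$) to be cyclic acting by scalars; comparing the associated Veronese subalgebras gives $G=\overline G$, as required. If $\overline G$ is infinite, its identity component $\overline G^{\circ}$ is positive-dimensional. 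When $\overline G^{\circ}\subseteq\mathbb{C}^{*}I$ we must have $\overline G^{\circ}=\mathbb{C}^{*}I$, so $\mathbb{C}\langle X_d\rangle^{G}\subseteq\mathbb{C}\langle X_d\rangle^{\mathbb{C}^{*}I}=\mathbb{C}$, contradicting the hypothesis with $e=0$. Otherwise $\overline G^{\circ}$ acts non-scalarly, and the remaining work — which I expect to be the main obstacle in (ii) — is to show that the invariants of such a positive-dimensional group in the free algebra are never finitely generated. I would pass to a one-parameter subgroup of $\overline G^{\circ}$ (a torus $\mathbb{G}_m$ with non-constant weights on $\mathbb{C}X_d$, or, in the purely unipotent situation, a $\mathbb{G}_a$ with a non-trivial Jordan block), use that its invariant subalgebra is free with Hilbert series which is algebraic but not rational — for instance $\sum_{k\ge0}\binom{2k}{k}t^{2k}=(1-4t^{2})^{-1/2}$ for weights $1,-1$ on $\mathbb{C}X_2$ — hence not finitely generated, and then propagate this failure back to $\overline G$ through the normal subgroup $\overline G^{\circ}$ and the finite quotient $\overline G/\overline G^{\circ}$, invoking the structure theory of linear algebraic groups. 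The uniform treatment of all possible positive-dimensional $\overline G^{\circ}$, and in particular the unipotent case, is the delicate point.
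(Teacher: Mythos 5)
The paper states this theorem as a survey result and gives no proof of its own, citing Dicks--Formanek, Kharchenko and Koryukin; so your argument has to stand on its own, and at the decisive points it does not yet. In (i) your easy direction is fine, and your reduction of the hard direction is the standard one: freeness of $\mathbb{C}\langle X_d\rangle^{G}$ (Lane--Kharchenko), the Dicks--Formanek series, finite generation iff $1/H$ is a polynomial, the factorization $1-a(t)=\prod_j(1-\mu_j t)$ over the distinct nonzero trace values, and the residue identities. But the step you defer, namely that dominant-singularity information plus integrality plus Galois-invariance ``should exclude any $\mu_j$ of modulus $<d$'', is exactly where the theorem lives, and it is not a routine finish: the purely numerical version of the claim is false, e.g. $\frac{1}{3}\left(\frac{2}{1-2t}+\frac{1}{1+t}\right)=\frac{1}{1-t-2t^{2}}$, so an average $\frac{1}{|G|}\sum_{g}\frac{1}{1-\chi(g)t}$ of simple fractions whose parameters have different moduli can perfectly well have polynomial reciprocal; any correct finish must genuinely use that the $\chi(g)$ are traces of a finite linear group (this is the actual content of the cited papers). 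Moreover, elements with $\chi(g)=0$ contribute no pole at all, so eliminating small-modulus poles cannot eliminate them; they require a separate argument (for instance comparing the two expressions for $H$ as $t\to\infty$), which your sketch does not contain.

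In (ii) the Zariski-closure step and the case $\overline{G}^{\circ}=\mathbb{C}^{*}I$ are fine, but the core of your plan fails for two reasons. First, the inference is backwards: for a one-parameter subgroup $H\le\overline{G}^{\circ}$ one has $\mathbb{C}\langle X_d\rangle^{\overline{G}}\subseteq\mathbb{C}\langle X_d\rangle^{H}$, and a subalgebra of a non-finitely-generated free algebra can very well be finitely generated, so non-finite generation of the $H$-invariants does not ``propagate back'' to $\overline{G}$; you indicate no mechanism for this. Second, the statement you intend to prove in that last step, that a positive-dimensional group acting non-scalarly never has finitely generated invariants in $\mathbb{C}\langle X_d\rangle$, is false without Koryukin's non-degeneracy hypothesis, which your argument never invokes outside the scalar-torus case: for $\mathbb{G}_m$ acting on $\mathbb{C}X_2$ with weights $(1,0)$ the invariants are $\mathbb{C}\langle x_2\rangle$, finitely generated (and, of course, contained in a smaller $\mathbb{C}\langle Y_1\rangle$). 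So the hypothesis that $\mathbb{C}\langle X_d\rangle^{G}$ is not contained in any $\mathbb{C}\langle Y_e\rangle$ with $e<d$ must enter the infinite-group analysis in an essential way, and your proposal has no place where it does.
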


On the other hand the following theorem of Koryukin \cite{Kor} implies something positive.

\begin{theorem}\label{finite generation by Koryukin}
Let us equip the homogeneous component of degree $n$ of the free algebra ${\mathbb C}\langle X_d\rangle$
with the action of the symmetric group $S_n$ by permuting the positions of the variables:
\[
\left(\sum\alpha_ix_{i_1}\cdots x_{i_n}\right){\sigma}=\sum\alpha_ix_{i_{\sigma(1)}}\cdots x_{i_{\sigma(n)}},\quad \sigma\in S_n.
\]
Then under this additional action the algebra ${\mathbb C}\langle X_d\rangle^G$ is finitely generated for any reductive group $G$.
\end{theorem}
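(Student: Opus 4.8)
The plan is to translate the statement into Schur--Weyl and Cauchy duality and then feed in the classical finite-generation theorems for the commutative invariants of several vector variables. Write $W={\mathbb C}X_d$, so that the degree-$n$ component of ${\mathbb C}\langle X_d\rangle$ is $V_n=W^{\otimes n}$, with ${\rm GL}_d({\mathbb C})$ acting diagonally and with the Koryukin action of $S_n$ permuting the tensor factors; the two actions commute. Schur--Weyl duality gives $W^{\otimes n}=\bigoplus_\lambda S^\lambda(W)\otimes M_\lambda$, the sum over partitions $\lambda\vdash n$ with at most $d$ parts, $S^\lambda(W)$ the irreducible ${\rm GL}_d$-module and $M_\lambda$ the Specht module; taking $G$-invariants, $V_n^G=\bigoplus_\lambda (S^\lambda W)^G\otimes M_\lambda$ as $S_n$-modules. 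So the goal becomes: produce a finite set of homogeneous invariants that generates ${\mathbb C}\langle X_d\rangle^G=\bigoplus_n V_n^G$ under two operations, multiplication (concatenation of tensors) and the Koryukin $S_n$-actions.

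The same multiplicity spaces $(S^\mu W)^G$ govern the commutative side: by the Cauchy identity $S(W\otimes{\mathbb C}^k)=\bigoplus_\mu S^\mu(W)\otimes S^\mu({\mathbb C}^k)$, so the polynomial invariant ring of $k$ vector variables is $\bigoplus_\mu (S^\mu W)^G\otimes S^\mu({\mathbb C}^k)$. I would exploit this by embedding the Koryukin algebra into the ``stable'' commutative invariant ring. Fix an infinite-dimensional space $U$ with basis $u_1,u_2,\dots$; the assignment $w_1\otimes\cdots\otimes w_n\mapsto(w_1\otimes u_1)(w_2\otimes u_2)\cdots(w_n\otimes u_n)$ identifies $W^{\otimes n}$ with the subspace of $S(W\otimes U)$ multilinear in $u_1,\dots,u_n$; it is ${\rm GL}_d$-equivariant, it carries the Koryukin $S_n$-action to the subgroup $S_n\subset{\rm GL}(U)$ permuting $u_1,\dots,u_n$, and it carries concatenation of tensors to the product in $S(W\otimes U)$ followed by the shift $u_i\mapsto u_{m+i}$. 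Thus $\bigoplus_n V_n^G$ becomes the ``$u$-multilinear'' part of $S(W\otimes U)^G$, with its algebra structure and its $S_n$-actions recovered from the ring product and the ${\rm GL}(U)$-action.

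Now bring in the classical input. Since $G$ is reductive, each ring $S(W\otimes{\mathbb C}^k)^G$ is finitely generated (Hilbert--Nagata), and by Weyl's polarization theorem there is a bound $k_0$ depending only on $G\subseteq{\rm GL}_d$ (one may take $k_0=\dim W$) and a fixed finite set of multihomogeneous invariants $I_1,\dots,I_s$ of $k_0$ variables, of degrees $e_1,\dots,e_s$, such that for every $k$ the invariants of $k$ variables are generated by the polarizations of $I_1,\dots,I_s$; in the limit, the ${\rm GL}(U)$-translates of $I_1,\dots,I_s$ generate $S(W\otimes U)^G$ as an algebra. Given a $u$-multilinear invariant $h\in V_n^G$, expand it as a polynomial in these translates. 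Multilinearity forces every surviving monomial to be a product $\prod_j(g_j\cdot I_{t_j})$ in which the $j$-th factor contributes the part multilinear in a block $B_j$ of $e_{t_j}$ variables, the $B_j$ partitioning $\{u_1,\dots,u_n\}$. For a fixed block of $e_t$ variables, the multilinear parts of the translates $g\cdot I_t$ span a finite-dimensional space; pulling these spaces back through the embedding yields finite-dimensional subspaces of $V_{e_1}^G,\dots,V_{e_s}^G$, and I let $\Phi$ be a finite basis of their sum. Then $h$ is a linear combination of terms obtained by choosing a set partition of $\{1,\dots,n\}$, placing a member of $\Phi$ on each block, concatenating, and summing over the partitions of a given shape; and summing over partitions of a given shape is precisely an $S_n$-orbit sum applied to a single concatenation. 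Hence the finite set $\Phi$ generates ${\mathbb C}\langle X_d\rangle^G$ under multiplication and the Koryukin $S_n$-actions, which is the assertion.

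The step carrying the weight is the polarization theorem: one needs a finite generating set for the commutative invariants of $k$ variables that is uniform in $k$ --- equivalently, a uniform bound on the partitions $\mu$ for which $(S^\mu W)^G$ contributes a generator --- and it is this uniformity, not merely finite generation for a single large $k$, that confines the problem to finitely many degrees on the noncommutative side. For reductive $G$ this uniform polarization statement is classical (Weyl, \emph{The Classical Groups}; see also Procesi's book), and in modern language it is the assertion that the twisted commutative algebra ${\mathbb C}[W\otimes U]^G$ is finitely generated. The remaining work --- checking that the embedding into $S(W\otimes U)$ is well defined with the stated equivariance, and matching the set-partition combinatorics of polarizing a product against $S_n$-orbit sums of concatenations --- is routine and I would isolate it in preliminary lemmas.
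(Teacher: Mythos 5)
The paper states this result without proof, simply attributing it to Koryukin \cite{Kor}, so there is no internal argument to compare against; judged on its own, your proposal is correct and is essentially the classical proof of Koryukin's theorem. The two genuine inputs are exactly the ones you isolate: Hilbert--Nagata finite generation of $S(W\otimes{\mathbb C}^k)^G$ for reductive $G$, and Weyl's polarization theorem giving a generating set uniform in $k$ (equivalently, a bound on the degrees of the needed generators), combined with the identification of the degree-$n$ component of ${\mathbb C}\langle X_d\rangle^G$ with the $u$-multilinear part of $S(W\otimes U)^G$, under which the Koryukin $S_n$-action becomes permutation of $u_1,\ldots,u_n$ and concatenation becomes the shifted product. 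One small simplification: the finite-dimensionality argument for $\Phi$ is automatic, since each $V_e^G\subseteq W^{\otimes e}$ is finite dimensional; the only thing that matters is the degree bound $\max_t e_t$ coming from polarization, and your final set-partition bookkeeping (each multilinear monomial being a permuted concatenation of multilinear pieces of bounded degree) is the right way to convert polynomial generation into generation under products and the $S_n$-actions.
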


The analogue of the Shephard-Todd-Chevalley theorem  sounds also very different for $K\langle X_d\rangle$.
It turns out that the algebra ${\mathbb C}\langle X_d\rangle^G$ is always free.
Additionally, when $G$ is finite, then there is a Galois correspondence
between the subgroups of $G$ and the free subalgebras of ${\mathbb C}\langle X_d\rangle$ which contain
${\mathbb C}\langle X_d\rangle^G$.

\begin{theorem}\label{Lane-Khanchenko}
{\rm (Lane \cite{Lan} and Kharchenko \cite{Kh1})}
For every subgroup $G$ of $\text{\rm GL}_d({\mathbb C})$
the algebra of invariants ${\mathbb C}\langle X_d\rangle^G$ is free.
\end{theorem}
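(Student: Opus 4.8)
The plan is to deduce the freeness of $R=\mathbb{C}\langle X_d\rangle^G$ from two ingredients: a cancellation (``inertness'') property that invariant subalgebras enjoy inside the free algebra, and the classical fact that $F=\mathbb{C}\langle X_d\rangle$ is an integral domain and a free ideal ring with the weak algorithm. Since $G$ acts by graded automorphisms of $F$ (standard grading, $\deg x_i=1$), the subalgebra $R=\bigoplus_{n\ge0}R_n$ is a connected graded subalgebra of $F$ with $R_0=\mathbb{C}$, so it suffices to exhibit a set of homogeneous generators on which $R$ is free.

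The first step is the inertness property: if $f\in F$, $r\in R\setminus\{0\}$ and $fr\in R$, then $f\in R$ (and symmetrically $rf\in R$ forces $f\in R$). This is immediate because $F$ has no zero divisors: for each $\sigma\in G$ we have $\sigma(f)\,r=\sigma(f)\sigma(r)=\sigma(fr)=fr$, hence $(\sigma(f)-f)\,r=0$ and so $\sigma(f)=f$; as $\sigma$ is arbitrary, $f\in R$. This is the only point at which $G$-invariance, rather than the mere property of being a graded subalgebra, is used.

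The second step is a ``minimal generators carry no relations'' argument. Choose homogeneous elements $u_i$, $i\in I$, lifting a $\mathbb{C}$-basis of the graded vector space $R^+/(R^+)^2$, where $R^+=\bigoplus_{n\ge1}R_n$; a graded Nakayama argument shows that $\{u_i\}$ generates $R$. If $\{u_i\}$ satisfied a nontrivial relation, pick one of least total degree $N$, i.e.\ a nonzero $w$ in the graded free algebra on symbols $z_i$ ($\deg z_i=\deg u_i$) with $w(u)=0$. Collecting the monomials of $w$ by their leftmost symbol and using minimality of $N$ and of the generating set, one reduces to an identity $\sum_j u_{i_j}v_j=0$ holding in $F$, where the $u_{i_j}$ are distinct generators and the $v_j\in R^+$ are nonzero. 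Each term is nonzero, so this is a non-trivial relation in the semifir $F$; the semifir property supplies an invertible matrix $P$ over $F$ trivializing it, and by homogeneity $P$ may be taken block-triangular (with respect to the ascending order of degrees) with nonzero scalars on the diagonal. Trivialization makes some transformed left factor $\sum_j u_{i_j}P_{jk}$ vanish, which expresses a single generator $u_{i_k}$ as an $F$-linear combination of the remaining $u_{i_j}$ with coefficients $P_{jk}$ of positive degree. Applying the inertness property — in its iterated, $n$-inert strengthening — one argues that these coefficients already belong to $R^+$; but then the displayed identity places $u_{i_k}$ in $(R^+)^2$, contradicting the choice of the $u_i$ as a minimal generating set. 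Therefore no relation exists and $R$ is free on $\{u_i\}$.

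The main obstacle is precisely this last transfer: one must check that the trivialization of a non-trivial relation, which a priori takes place in $F$, can be realized with all the intermediate data lying in $R$. This is where the simple one-sided cancellation of the first step has to be bootstrapped into a full (two-sided, iterated) inertia statement, and is essentially an instance of Cohn's inertia theorem for free associative algebras; verifying that it applies in the graded invariant-theoretic situation at hand is the technical heart of the proof. For a finite group $G$ one can shortcut this by a Galois-theoretic route — showing that $F$ is a finitely generated free $R$-module of which $R$ is a direct summand (via the Reynolds operator), and deducing that $R$ inherits the fir property — but this argument collapses for infinite $G$, whereas the inertness argument above is uniform in $G$ and is the one I would carry out in full.
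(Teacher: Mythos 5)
The paper states this theorem without proof (it is quoted from Lane and Kharchenko), so your proposal has to stand on its own. Its first step is correct and is indeed the invariant-theoretic heart of all known proofs: since $\mathbb{C}\langle X_d\rangle$ is a domain and $G$ acts by graded algebra automorphisms, $fr\in R$ or $rf\in R$ with $0\neq r\in R=\mathbb{C}\langle X_d\rangle^G$ forces $f\in R$; likewise the graded Nakayama choice of homogeneous generators $u_i$ lifting a basis of $R^+/(R^+)^2$ is fine.

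The genuine gap is exactly where you place it, and the tools you invoke do not close it. From a minimal-degree relation you arrive at $\sum_j u_{i_j}v_j=0$ with $v_j\in R$, and the weak algorithm in $F=\mathbb{C}\langle X_d\rangle$ expresses some $u_{i_k}$ as an $F$-linear combination of the remaining generators; but to contradict minimality of the generating set you must know that the coefficients can be taken in $R$ (and you must also handle degree-zero coefficients, which contradict linear independence modulo $(R^+)^2$ rather than give membership in $(R^+)^2$). Your cancellation property concerns a single product $fb$ with one invariant factor and does not by itself yield the needed $n$-term (``$n$-inert'') statement; and Cohn's inertia theorem is about the free algebra being inert in the free power series algebra (or about subalgebras already known to be free), so appealing to it here is inapplicable or circular: proving that a cancellation-closed graded subalgebra of $F$ is free, i.e. suitably inert, is precisely the Kharchenko--Lane lemma you are trying to establish. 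A concrete way to finish without that machinery is to use the group action a second time instead of only the abstract cancellation property: show by induction on the number of terms that distinct minimal generators are right $F$-linearly independent --- in a shortest homogeneous right dependence the weak algorithm gives $u_{i_k}=\sum_{j\neq k}u_{i_j}e_j$ with $e_j\in F$ homogeneous, and applying any $\sigma\in G$ and subtracting produces a shorter dependence, forcing $\sigma(e_j)=e_j$, hence $e_j\in R$, which contradicts the minimality of the generating set; right $F$-independence then forces every $v_j$ to vanish, and induction on degree eliminates the relation. As written, however, the decisive transfer step is asserted rather than proved, so the argument is incomplete.
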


\begin{theorem}\label{Galois correcpondence} {\rm (Kharchenko \cite{Kh1})}
Let $G$ be a finite subgroup of $\text{\rm GL}_d({\mathbb C})$.
The map $H\longrightarrow {\mathbb C}\langle X_d\rangle^H$ gives a one-to-one
correspondence between the subgroups of $G$
and the free subalgebras of ${\mathbb C}\langle X_d\rangle$ containing
${\mathbb C}\langle X_d\rangle^G$.
\end{theorem}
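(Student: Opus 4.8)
The plan is to prove that $H\mapsto\mathbb{C}\langle X_d\rangle^H$ is a bijection in three steps: well-definedness, injectivity, and surjectivity. Write $A=\mathbb{C}\langle X_d\rangle$. Well-definedness is immediate: Theorem \ref{Lane-Khanchenko} guarantees that each $A^H$ is a free associative algebra, obviously $A^G\subseteq A^H\subseteq A$, and $H_1\le H_2$ forces $A^{H_2}\subseteq A^{H_1}$, so the map sends subgroups of $G$ to free subalgebras of $A$ containing $A^G$ and is order-reversing. For injectivity I would show that each $H$ is recovered from $A^H$ as its pointwise stabiliser $\mathrm{Stab}_G(A^H)=\{g\in G\mid g(f)=f\text{ for all }f\in A^H\}$; this suffices, because then $A^{H_1}=A^{H_2}$ forces $H_1=H_2$ at once.

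To prove $\mathrm{Stab}_G(A^H)=H$ the inclusion $\supseteq$ is trivial, so suppose $g\in G$ fixes $A^H$ pointwise. For every $a\in A$ the relative trace $\mathrm{tr}_H(a)=\sum_{h\in H}h(a)$ lies in $A^H$, hence $g(\mathrm{tr}_H(a))=\mathrm{tr}_H(a)$, that is, $\sum_{k\in gH}k(a)=\sum_{k\in H}k(a)$ for all $a\in A$; so the $\mathbb{C}$-linear operator $\sum_{k\in gH}k-\sum_{k\in H}k$ annihilates $A$. If $g\notin H$ the cosets $gH$ and $H$ are disjoint, and this would be a nontrivial $\mathbb{C}$-linear dependence among distinct elements of $G$ acting on $A$. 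The decisive point I would invoke is that for $d\ge 2$ the algebra $A$ is a domain whose only units are the nonzero scalars, and that no nonidentity element of $\mathrm{GL}_d(\mathbb{C})$ becomes inner even in the universal field of fractions of $A$, so the $G$-action is X-outer; by the linear independence of outer automorphisms (Kharchenko; see also Dicks and Formanek \cite{DiF}) distinct elements of $G$ are $\mathbb{C}$-linearly independent as operators on $A$. This contradiction yields $g\in H$, so the map is injective.

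Surjectivity is the step I expect to be the main obstacle. Given a free subalgebra $F$ with $A^G\subseteq F\subseteq A$, put $H=\mathrm{Stab}_G(F)$; then $F\subseteq A^H$, and it remains to show $F=A^H$. Since $A^G\subseteq F\subseteq A^H\subseteq A$ is a chain of graded free associative algebras and injectivity is already in hand, it suffices to prove that $F$ and $A^H$ have equal Hilbert series. The route I would take is to show that $A$ is a graded free one-sided module over each free intermediate subalgebra and that the rank of $A$ over $F$ coincides with the rank of $A$ over $A^{\mathrm{Stab}_G(F)}$, which forces $H(F,t)=H(A^H,t)$ and hence $F=A^H$ because $F\subseteq A^H$. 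The hard part is exactly this module-theoretic input — freeness of $A$ over an arbitrary intermediate free subalgebra together with the identification of its rank — and here I would pass to the Martindale (or universal) ring of quotients $Q$ of $A$, use X-outerness of the linear $G$-action to see that $Q$ is a Galois extension of $Q^G$ with group $G$ and that the skew group ring $A*G$ is prime, apply Kharchenko's Galois correspondence for $Q$, and finally transfer it back to $A$ by checking that a free intermediate subalgebra is rationally closed in $A$. With that in place the Hilbert-series bookkeeping completes the argument.
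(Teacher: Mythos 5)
The paper itself gives no proof of this theorem --- it is quoted as a result of Kharchenko \cite{Kh1} --- so your attempt has to be judged on its own terms. The first two steps of your plan are essentially sound: well-definedness is exactly Theorem \ref{Lane-Khanchenko}, and your relative-trace argument for $\mathrm{Stab}_G({\mathbb C}\langle X_d\rangle^H)=H$ works, because distinct elements of $G$ really are linearly independent as operators on ${\mathbb C}\langle X_d\rangle$. (You do not need X-outerness or the Martindale machinery for that: the action is graded, so a dependence $\sum c_i g_i=0$ restricts to each component $({\mathbb C}X_d)^{\otimes n}$; evaluating on $v^{\otimes n}$ for a generic $v$, grouping the vectors $g_i(v)$ into proportionality classes and using a Vandermonde argument kills all $c_i$.)

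The genuine gap is surjectivity, which you yourself flag as the main obstacle, and the route you sketch does not close it. First, a free subalgebra $F$ with ${\mathbb C}\langle X_d\rangle^G\subseteq F\subseteq {\mathbb C}\langle X_d\rangle$ is \emph{not} given to be homogeneous, so the phrase ``chain of graded free algebras'' and any Hilbert-series comparison are unjustified; that such an $F$ is automatically graded is part of the conclusion of the theorem, not a hypothesis. Second, even in the graded case the rank bookkeeping cannot work as stated: ${\mathbb C}\langle X_d\rangle$ has infinite rank over ${\mathbb C}\langle X_d\rangle^G$ already for $d=2$, $G=S_2$ (the quotient of Hilbert series is $1/(1-t)$), so ``rank of $A$ over $F$ equals rank of $A$ over $A^H$'' is an equality of infinite cardinals and says nothing about Hilbert series; you would need equality of the graded generating functions of module generators, and no argument is offered for that, nor for the asserted freeness of $A$ as a module over an arbitrary intermediate free subalgebra. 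Third, the fallback --- pass to the Martindale quotient, invoke Kharchenko's Galois theory for prime rings with X-outer actions, and ``transfer back by checking that a free intermediate subalgebra is rationally closed'' --- concentrates the entire difficulty in that last unproved claim, which is essentially equivalent to what must be shown: the whole point of the theorem is that freeness of an intermediate subalgebra forces it to be a full invariant algebra ${\mathbb C}\langle X_d\rangle^H$. As it stands, surjectivity, and hence the theorem, is not established; for a complete argument one has to go into Kharchenko's original combinatorial analysis of free subalgebras containing the invariants rather than the Hilbert-series shortcut proposed here.
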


Comparing with the commutative case,
the behavior of the Hilbert series of ${\mathbb C}\langle X_d\rangle^G$
depends surprisingly very much on the properties of the group $G$.
For example, the classical Molien formula \cite{Mo} for the Hilbert series of the algebra of invariants ${\mathbb C}[X_d]^G$
for a finite group $G$ states that
\[
H({\mathbb C}[X_d]^G,t)=\frac{1}{\vert G\vert}\sum_{g\in G}\frac{1}{\det(1-tg)},
\]
where $\det(1-tg)$ is the determinant of the matrix $I_d-tg\in \text{GL}_d({\mathbb C})$ (and $I_d$ is the identity $d\times d$ matrix).
The analogue of the Molien formula for $H({\mathbb C}\langle X_d\rangle^G,t)$ is due to Dicks and Formanek \cite{DiF}:

\begin{theorem}\label{associative Molien formula}
For a finite subgroup $G$ of $\text{\rm GL}_d({\mathbb C})$
\[
H({\mathbb C}\langle X_d\rangle^G,t)=\frac{1}{\vert G\vert}\sum_{g\in G}\frac{1}{1-\text{\rm tr}(tg)},
\]
where $\text{\rm tr}(tg)$ is the trace of the matrix $tg$, $g\in\text{\rm GL}_d({\mathbb C})$.
\end{theorem}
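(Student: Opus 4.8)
The plan is to reduce the identity to the classical averaging fact that, for a finite group $G$ acting linearly on a finite dimensional complex space $W$, one has $\dim W^{G}=\frac{1}{|G|}\sum_{g\in G}\text{\rm tr}(g|_{W})$, since the Reynolds operator $e=\frac{1}{|G|}\sum_{g\in G}g$ is an idempotent with image $W^{G}$, so $\dim W^{G}=\text{\rm tr}(e)$. First I would describe ${\mathbb C}\langle X_d\rangle$ as a graded $G$-module: its homogeneous component of degree $n$ is the $n$-th tensor power $V^{\otimes n}$ of $V={\mathbb C}X_d$, and since $G$ acts diagonally — by the given matrices on the generators $x_1,\dots,x_d$ — it acts on $V^{\otimes n}$ through $g^{\otimes n}$. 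Because the action preserves the grading, ${\mathbb C}\langle X_d\rangle^{G}=\bigoplus_{n\ge 0}(V^{\otimes n})^{G}$, and therefore
\[
H({\mathbb C}\langle X_d\rangle^{G},t)=\sum_{n\ge 0}\dim\bigl((V^{\otimes n})^{G}\bigr)\,t^{n}.
\]

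Next I would apply the averaging fact with $W=V^{\otimes n}$, getting $\dim\bigl((V^{\otimes n})^{G}\bigr)=\frac{1}{|G|}\sum_{g\in G}\text{\rm tr}\bigl(g^{\otimes n}\bigr)$. The one computational ingredient is multiplicativity of the trace under tensor products, $\text{\rm tr}(g^{\otimes n})=(\text{\rm tr}\,g)^{n}$. This is exactly the point at which the free associative setting diverges from the commutative one: there one works with the symmetric powers $S^{n}(V)$ rather than the full $V^{\otimes n}$, $\text{\rm tr}(g|_{S^{n}(V)})$ is the complete homogeneous symmetric function of the eigenvalues of $g$, and summing gives $\prod_i(1-t\lambda_i)^{-1}=\det(1-tg)^{-1}$ instead of a geometric series.

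Finally I would substitute, interchange the (finite, resp. convergent geometric) summations, and sum:
\[
H({\mathbb C}\langle X_d\rangle^{G},t)=\frac{1}{|G|}\sum_{g\in G}\sum_{n\ge 0}(\text{\rm tr}\,g)^{n}t^{n}=\frac{1}{|G|}\sum_{g\in G}\frac{1}{1-t\,\text{\rm tr}\,g}=\frac{1}{|G|}\sum_{g\in G}\frac{1}{1-\text{\rm tr}(tg)},
\]
using linearity of the trace, $\text{\rm tr}(tg)=t\,\text{\rm tr}(g)$. As an identity of formal power series this is unconditional; as an analytic identity it holds for $|t|<\bigl(\max_{g\in G}|\text{\rm tr}\,g|\bigr)^{-1}$. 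I do not expect a genuine obstacle here: the only points that need a word of care are the identification of the degree-$n$ component with $V^{\otimes n}$ together with its $G$-action, the trace multiplicativity, and the bookkeeping of the degree-$0$ term, which contributes $1$ on the left and $\frac{1}{|G|}\sum_{g\in G}1=1$ on the right.
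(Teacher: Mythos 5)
Your proof is correct and is essentially the standard argument behind the cited Dicks--Formanek formula (the paper itself only states the theorem with a reference): identify the degree-$n$ component of ${\mathbb C}\langle X_d\rangle$ with $V^{\otimes n}$, use the Reynolds-operator identity $\dim W^G=\frac{1}{\vert G\vert}\sum_{g\in G}\operatorname{tr}(g\vert_W)$ together with $\operatorname{tr}(g^{\otimes n})=(\operatorname{tr}g)^n$, and sum the geometric series. All the delicate points (grading preserved by $G$, the degree-$0$ term, $\operatorname{tr}(tg)=t\operatorname{tr}(g)$) are handled correctly, so there is nothing to add.
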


\begin{corollary}\label{invariants of finite groups assocative}
If $G$ is a finite subgroup of ${\rm GL}_d({\mathbb C})$, then the Hilbert series of the free algebra ${\mathbb C}\langle X_d\rangle^G$
and the generating function $a(t)$ of its set of homogeneous free generators are rational functions.
\end{corollary}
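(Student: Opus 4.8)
The plan is to combine the Lane--Kharchenko freeness theorem (Theorem \ref{Lane-Khanchenko}) with the associative Molien formula (Theorem \ref{associative Molien formula}), the bridge between them being the elementary bookkeeping of free associative algebras. Write $R={\mathbb C}\langle X_d\rangle^G$. By Theorem \ref{Lane-Khanchenko} the algebra $R$ is free, and it is graded and connected, i.e.\ $R_0={\mathbb C}$; a connected graded algebra which is free as an abstract algebra is free on a homogeneous generating set $Y=\bigcup_{k\geq 1}Y^{(k)}$, $Y^{(k)}=\{y\in Y\mid\deg y=k\}$, and each $Y^{(k)}$ is finite since it lies in the finite-dimensional component $R_k$. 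Put $a(t)=\sum_{k\geq 1}\vert Y^{(k)}\vert\,t^k$, the generating function from the statement. Because $R$ is free on $Y$, every monomial of $R$ factors uniquely as a juxtaposition of elements of $Y$, so the number of monomials of degree $n$ equals the coefficient of $t^n$ in $\sum_{m\geq 0}a(t)^m$; as $a(0)=0$ this is a well-defined formal power series and
\[
H(R,t)=\frac{1}{1-a(t)},\qquad\text{equivalently}\qquad a(t)=1-\frac{1}{H(R,t)}.
\]
Thus it suffices to prove that $H(R,t)$ is a rational function with $H(R,0)\neq0$, for then $1/H(R,t)$, and hence $a(t)=1-1/H(R,t)$, is rational as well.

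For the rationality of $H(R,t)$ I would invoke Theorem \ref{associative Molien formula}. The matrix $tg$ has trace $\text{\rm tr}(tg)=t\,\text{\rm tr}(g)$, a polynomial in $t$ of degree at most one with value $0$ at $t=0$; hence each summand $1/(1-t\,\text{\rm tr}(g))$ is a rational function of $t$ regular at the origin, and
\[
H(R,t)=\frac{1}{\vert G\vert}\sum_{g\in G}\frac{1}{1-t\,\text{\rm tr}(g)}
\]
is a finite sum of such functions, hence rational, with $H(R,0)=\frac{1}{\vert G\vert}\sum_{g\in G}1=1$. Finally, $H(R,t)=\sum_{n\geq0}\dim(R_n)t^n$ has nonnegative integer coefficients, and a formal power series with rational coefficients that represents a rational function over ${\mathbb C}$ is automatically rational over ${\mathbb Q}$ (its minimal linear recurrence can be chosen over ${\mathbb Q}$), so $H(R,t)\in{\mathbb Q}(t)$. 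The same argument applies to $a(t)$, whose coefficients $\vert Y^{(k)}\vert$ are nonnegative integers while $a(t)=1-1/H(R,t)\in{\mathbb C}(t)$; hence $a(t)\in{\mathbb Q}(t)$, completing the proof.

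The computation is purely formal; the one step that deserves explicit care is the identity $H(R,t)=1/(1-a(t))$. It rests on two standard facts that I would cite rather than reprove: that a connected graded free algebra is free on a homogeneous generating set (a homogeneous lift of a homogeneous basis of $R_+/R_+^2$, where $R_+=\bigoplus_{n\geq1}R_n$), and that words in a free associative algebra admit a unique factorization into free generators. Neither is an obstacle, but stating them makes airtight the passage from the abstract freeness of Theorem \ref{Lane-Khanchenko} to the numerical generating function $a(t)$; the only other point worth flagging is the descent from ${\mathbb C}(t)$ to ${\mathbb Q}(t)$, which matters only because the paper's notion of rational function refers to elements of ${\mathbb Q}(t)$.
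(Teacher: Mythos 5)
Your argument is correct and is essentially the paper's: the corollary is stated as an immediate consequence of the Dicks--Formanek formula (Theorem \ref{associative Molien formula}), which exhibits $H({\mathbb C}\langle X_d\rangle^G,t)$ as a finite sum of rational functions, combined with the freeness from Theorem \ref{Lane-Khanchenko} and the relation $H=1/(1-a(t))$ that the paper itself uses in Examples \ref{associative symmetric two variables} and \ref{three variables}. Your extra care about the homogeneous free generating set and the descent from ${\mathbb C}(t)$ to ${\mathbb Q}(t)$ only fills in details the paper leaves implicit.
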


We shall illustrate Corollary \ref{invariants of finite groups assocative} with two examples.

\begin{example}\label{associative symmetric two variables}
Let $d=2$ and $G=S_2$ be the symmetric group of degree 2. It consists of the matrices
\[
I_2=\left(\begin{matrix}1&0\\ 0&1\\ \end{matrix}\right),\quad \sigma=\left(\begin{matrix}0&1\\ 1&0\\ \end{matrix}\right).
\]
Hence
\[
\det(I_2-tI_2)=\left\vert\begin{matrix}1-t&0\\ 0&1-t\\ \end{matrix}\right\vert=(1-t)^2,
\det(I_2-t\sigma)=\left\vert\begin{matrix}1&-t\\ -t&1\\ \end{matrix}\right\vert=1-t^2.
\]
The Molien formula gives
\[
H({\mathbb C}[x_1,x_2]^{S_2},t)=\frac{1}{2}\left(\frac{1}{\det(I_2-tI_2)}+\frac{1}{\det(I_2-t\sigma)}\right)
=\frac{1}{(1-t)(1-t^2)},
\]
which expresses the fact that the algebra ${\mathbb C}[x_1,x_2]^{S_2}$ is isomorphic to the polynomial algebra
generated by the elementary symmetric functions
\[
e_1=x_1+x_2\text{ and }e_2=x_1x_2.
\]
Since $\text{tr}(I_2)=2$, $\text{tr}(\sigma)=0$, by the Dicks-Formanek formula we obtain
\[
H({\mathbb C}\langle x_1,x_2\rangle^{S_2},t)=\frac{1}{2}\left(\frac{1}{1-\text{tr}(tI_2)}+\frac{1}{1-\text{tr}(t\sigma)}\right)
\]
\[
=\frac{1}{2}\left(\frac{1}{1-2t}+1\right)=\frac{1-t}{1-2t}=1+t+2t^2+4t^3+\cdots.
\]
As in the case of free nonassociative algebras, there is a formula for the Hilbert series of the algebra ${\mathbb C}\langle Y\rangle$
for an arbitrary grade set $Y$ of free generators. If the generating function of $Y$ is $a(t)$, then
\[
H({\mathbb C}\langle Y\rangle,t)=\frac{1}{1-a(t)}.
\]
Easy computations give for the free generators of ${\mathbb C}\langle x_1,x_2\rangle^{S_2}$
\[
a(t)=\frac{t}{1-t}.
\]
This shows that the free homogeneous set of generators of the algebra ${\mathbb C}\langle x_1,x_2\rangle^{S_2}$ consists
of one polynomial for each degree $n\geq 1$. This example is a partial case of a result of Wolf \cite{W}
where she studied the symmetric polynomials in the free associative algebra ${\mathbb C}\langle X_d\rangle$, $d\geq 2$.
\end{example}

\begin{example}\label{three variables}
Let $G=\langle\sigma\rangle\subset \text{GL}_3({\mathbb C})$ be the cyclic group of order 3 which permutes the variables $x_1,x_2,x_3$.
It is generated by the matrix
\[
\sigma=\left(\begin{matrix}0&0&1\\ 1&0&0\\ 0&0&1\\ \end{matrix}\right),
\]
\[
\det(I_3-tI_3)=(1-t)^3,\det(I_3-t\sigma)=\det(I_3-t\sigma^2)=1-t^3,
\]
\[
H({\mathbb C}[X_3]^G,t)=\frac{1}{3}\left(\frac{1}{(1-t)^3}+\frac{2}{1-t^3}\right)=\frac{1+t^3}{(1-t)(1-t^2)(1-t^3)}
\]
and this is a confirmation of the well known fact that ${\mathbb C}[X_3]^G$ is a free ${\mathbb C}[e_1,e_2,e_3]$-module generated by
1 and $x_1^2x_2+x_2^2x_3+x_3^2x_1$. Here, as usual,
\[
e_1=x_1+x_2+x_3,\quad e_2=x_1x_2+x_1x_3+x_2x_3,\quad e_3=x_1x_2x_3.
\]
Since $\text{tr}(\sigma)=0$, for $H({\mathbb C}\langle X_3\rangle^G,t)$ we obtain
\[
H({\mathbb C}\langle X_3\rangle^G,t)=\frac{1}{3}\left(\frac{1}{(1-t)^3}+2\right)=\frac{1-2t}{1-3t}=1+t+3t^2+9t^3+\cdots.
\]
For the generating function $a(t)$ of the free generators of ${\mathbb C}\langle X_3\rangle^G$ we have
\[
\frac{1}{1-a(t)}=\frac{1-2t}{1-3t},\quad a(t)=\frac{t}{1-2t}.
\]
\end{example}

The situation changes drastically when we consider arbitrary reductive groups $G$. In the commutative case the Hilbert series
of the algebra of invariants ${\mathbb C}[X_d]^G$ is always rational. Surprisingly even in the simplest noncommutative case
we obtain an algebraic Hilbert series which is not rational.

\begin{example}\label{SL2-invariants free associative algebra}
(i) Let the special linear group $\text{SL}_2=\text{SL}_2({\mathbb C})$ act canonically on the two-dimensional vector space with basis $X_2$.
Almkvist, Dicks and Formanek \cite{ADF} showed that
\[
H({\mathbb C}\langle X_2\rangle^{\text{SL}_2},t)=\frac{1-\sqrt{1-4t^2}}{2t^2}.
\]
This means that homogeneous invariants exist for even degree only and their dimension of degree $2(n-1)$ is equal to the $n$th Catalan number $c_n$.
There is a formula relating the Hilbert series $H({\mathbb C}\langle X_2\rangle^{\text{SL}_2},t)$
and the generating function $a_{\text{SL}_2}(t)$ of the free homogeneous generating set ${\mathbb C}\langle X_2\rangle^{\text{SL}_2}$:
\[
H({\mathbb C}\langle X_2\rangle^{\text{SL}_2},t)=\frac{1}{1-a_{\text{SL}_2}(t)}.
\]
This implies that
\[
a_{\text{SL}_2}(t)=1-\frac{2t^2}{1-\sqrt{1-4t^2}}.
\]

(ii) Drensky and Gupta \cite{DG} computed the Hilbert series of the algebra of invariants ${\mathbb C}\langle X_2\rangle^{\text{UT}_2}$
of the unitriangular group $\text{UT}_2=\text{UT}_2({\mathbb C})$:
\[
H({\mathbb C}\langle X_2\rangle^{\text{UT}_2},t)=\frac{1-\sqrt{1-4t^2}}{t(2t-1+\sqrt{1-4t^2})}.
\]
As in the case of $\text{SL}_2({\mathbb C})$ we can obtain for the free generating set of the algebra of $\text{UT}_2({\mathbb C})$-invariants
\[
a_{\text{UT}_2}(t)=1-\frac{t(2t-1+\sqrt{1-4t^2})}{1-\sqrt{1-4t^2}}=t+a_{\text{SL}_2}(t).
\]
Since ${\mathbb C}\langle X_2\rangle^{\text{SL}_2}\subset{\mathbb C}\langle X_2\rangle^{\text{UT}_2}$, this equality suggests that
the set of free generators of ${\mathbb C}\langle X_2\rangle^{\text{UT}_2}$
consists the free generators of ${\mathbb C}\langle X_2\rangle^{\text{SL}_2}$ and one more generator of first degree
which was confirmed in \cite{DG}. The paper \cite{DG} contains also a procedure
which constructs inductively a free generating set of ${\mathbb C}\langle X_2\rangle^{\text{SL}_2}$.
\end{example}

Invariant theory of $\text{SL}_2({\mathbb C})$ and $\text{UT}_2({\mathbb C})$ considered, respectively,
as subgroups of $\text{SL}_d({\mathbb C})$ and $\text{UT}_d({\mathbb C})$,
acting on the polynomial algebra ${\mathbb C}[X_d]$ and the free associative algebra ${\mathcal C}\langle X_d\rangle$
can be translated in the language of derivations. We shall restrict our considerations for the case $d=2$ only.
Recall that the linear operator $\delta$ acting on an algebra $R$ is called a {\it derivation} if
\[
\delta(r_1r_2)=\delta(r_1)r_2+r_1\delta(r_2)\text{ for all }r_1,r_2\in R.
\]
The derivation is {\it locally nilpotent} if for any $r\in R$ there exists and $n$ such that $\delta^n(r)=0$.
The kernel $R^{\delta}$ of $\delta$ is called its {\it algebra of constants}.
It is well known, see e.g. Bedratyuk \cite{Bed} for comments, references and applications, that
there is a one-to-one correspondence between the ${\mathbb G}_a$-actions (the actions of the additive group $({\mathbb C},+)$)
on ${\mathbb C}X_d$ and the linear locally nilpotent derivations on ${\mathbb C}[X_d]$.

If $\text{UT}_2({\mathbb C})$ acts on ${\mathbb C}[X_2]$ by the rule
\[
g(x_1)=x_1,\quad g(x_2)=x_2+\alpha x_1,\quad g\in\text{UT}_2({\mathbb C}),\alpha\in{\mathbb C},
\]
then ${\mathbb C}[X_2]^{\text{UT}_2}$ coincides with the algebra of constants ${\mathbb C}[X_2]^{\delta_1}$ of the derivation $\delta_1$ defined by
\[
\delta_1(x_1)=0,\quad \delta_1(x_2)=x_1.
\]
Equivalently,
\[
{\mathbb C}[X_2]^{\text{UT}_2}=\{f(x_1,x_2)\in {\mathbb C}[X_2]\mid f(x_1,x_2+x_1)=f(x_1,x_2)\}.
\]
Similarly, ${\mathbb C}[X_2]^{\text{SL}_2}$ coincides with the subalgebra of ${\mathbb C}[X_2]^{\text{UT}_2}$
consisting of all $f(x_1,x_2)\in {\mathbb C}[X_2]^{\text{UT}_2}$ such that
\[
f(x_1+x_2,x_2)=f(x_1,x_2).
\]

Up till now we discussed Hilbert series of algebras of invariants which are subalgebras of polynomial algebras and free associative algebras.
Instead we may consider free algebras in other classes.
One of the most important algebras from this point of view are relatively free algebras of varieties of associative or nonassociative algebras.
We shall restrict our considerations to varieties of associative algebras over $\mathbb C$.

Let $R$ be an associative PI-algebra and let $F_d(\text{var}R)$ be the relatively free algebra of rank $d$
in the variety $\text{\rm var}R$ generated by $R$.
Again, we assume that the general linear group $\text{GL}_d({\mathbb C})$ acts canonically on the vector space ${\mathbb C}X_d$
and extend this action diagonally on the whole algebra $F_d(\text{var}R)$.
(Equipped with this action, in the case when $\text{var}R$ is the variety $\mathfrak A$ of all commutative associative algebras,
we do not consider polynomials as functions. The algebra $F_d({\mathfrak A})$ is isomorphic to the symmetric algebra
$S({\mathbb C}X_d)$ of the vector space ${\mathbb C}X_d$.)
For a subgroup $G$ of $\text{GL}_d({\mathbb C})$ the algebra of $G$-invariants $F_d^G(\text{var}R)$ in defined in an obvious way
as in the case of ${\mathbb C}[X_d]^G$ and ${\mathbb C}\langle X_d\rangle^G$.
For a background on invariant theory of relatively free algebras we refer to the survey articles by Formanek \cite{Fo} and the author \cite{D3},
see also the references in Domokos and Drensky \cite{DomD2, DomD3}.

Although PI-algebras are considered to have many similar properties with commutative algebras,
from the point of view of invariant theory they behave quite different.
For example, the finite generation of $F_d^G(\text{var}R)$ for all finite groups forces very strong restrictions
on the polynomial identities of $R$, and the restrictions are much stronger when we assume that $F_d^G(\text{var}R)$
is finitely generated for all reductive groups, see the surveys \cite{Fo, D3}
and Kharlampovich and Sapir \cite{KhS} where the finite generation is related also with algorithmic problems.
As an illustration we shall mention only a result in Domokos and Drensky \cite{DomD1}.
{\it The algebra $F_d^G(\text{\rm var}R)$ is finitely generated for all reductive groups $G$ if and only $R$ satisfies the identity
of Lie nilpotency $[x_1,\ldots,x_c]=0$ for some $c\geq 2$.}

If we consider Hilbert series of relatively free algebras, they are of the same kind as in the commutative case.
Hence we cannot obtain nonrational algebraic or trascendental power series in this way.
The following theorem was established in Domokos and Drensky \cite{DomD2}.
A key ingredient of its proof is the result of Belov \cite{Be1} for the rationality of the Hilbert series $F_d(\text{\rm var}R)$
and its extension by Berele \cite{B1}.

\begin{theorem}
Let $G$ be a subgroup of $\text{\rm GL}_d({\mathbb C})$ such that
for any finitely generated ${\mathbb N}_0$-graded commutative algebra $A$ with $A_0 = \mathbb C$
on which $\text{\rm GL}_d({\mathbb C})$ acts rationally via graded algebra automorphisms,
the subalgebra $A^G$ of $G$-invariants is finitely generated.
Then for every PI-algebra $R$ the Hilbert series of the relatively free algebra $F_d^G(\text{\rm var}R)$
is a rational function with denominator similar to the denominators of the Hilbert series of the algebras of $G$-invariants
in the commutative case.
\end{theorem}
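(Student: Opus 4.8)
The plan is to use the characteristic-zero hypothesis and the diagonal $\text{GL}_d({\mathbb C})$-action in order to replace the noncommutative algebra $F_d(\text{var}R)$ by a commutative model with the same graded space of $G$-invariants, and then to appeal to the Hilbert-Serre theorem. Since $\text{GL}_d({\mathbb C})$ acts on $F_d(\text{var}R)$ by graded algebra automorphisms and the ground field is of characteristic $0$, every homogeneous component $F_d(\text{var}R)_n$ is a polynomial $\text{GL}_d({\mathbb C})$-module, hence a direct sum of irreducible polynomial modules $W_\lambda$ of highest weight $\lambda$ (a partition of $n$ with at most $d$ parts), say with multiplicities $m_\lambda=m_\lambda(R)$. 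Taking $G$-invariants in each irreducible summand,
\[
H(F_d^G(\text{var}R),t)=\sum_{n\ge 0}\left(\sum_{|\lambda|=n}m_\lambda\dim W_\lambda^G\right)t^n ,
\]
so this series depends only on the $\text{GL}_d({\mathbb C})$-module structure of $F_d(\text{var}R)$ and not on its multiplication. We note also that $F_d(\text{var}R)$ is $d$-generated and lies in $\text{var}R$, hence is a PI-algebra, so by Theorem \ref{GKdim of PI-algebras is finite} it has finite Gelfand-Kirillov dimension and $\dim F_d(\text{var}R)_n$ is polynomially bounded, which is a necessary condition for a finitely generated commutative model to exist.

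The decisive step is to construct that model. Here I would invoke the rationality theorem of Belov \cite{Be1} together with its strengthening by Berele \cite{B1}: combining them (and Kemer's structure theory behind Belov's result), the multiplicity data $\{m_\lambda(R)\}$ of a PI-algebra is governed by a finitely generated commutative structure, and one obtains a finitely generated commutative ${\mathbb N}_0$-graded algebra $A$ with $A_0={\mathbb C}$, carrying a rational action of $\text{GL}_d({\mathbb C})$ by graded algebra automorphisms, such that $A\cong F_d(\text{var}R)$ as graded $\text{GL}_d({\mathbb C})$-modules. The multiplication of $A$ is not that of $F_d(\text{var}R)$; only the underlying graded $\text{GL}_d({\mathbb C})$-module is reproduced, which by the previous paragraph is precisely what the Hilbert series of the $G$-invariants sees. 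Granting this, the theorem follows at once: the hypothesis applied to $A$ shows that $A^G$ is a finitely generated algebra, and it is commutative and ${\mathbb N}_0$-graded with $(A^G)_0={\mathbb C}$, so by the Hilbert-Serre theorem (Theorem \ref{Hilbert series of commutative algebras} (i)) its Hilbert series $H(A^G,t)$ is a rational function whose denominator is a product of binomials $1-t^m$. Since $A^G$ and $F_d^G(\text{var}R)$ coincide as graded vector spaces, $H(F_d^G(\text{var}R),t)=H(A^G,t)$ has exactly this shape; compare Domokos and Drensky \cite{DomD2}.

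The main obstacle is the equivariant commutative realization in the middle step: one must pass from Belov's proof that $H(F_d(\text{var}R),t)$ is rational to an actual finitely generated commutative algebra that reproduces $F_d(\text{var}R)$ as a $\text{GL}_d({\mathbb C})$-module, and not merely its one-variable Hilbert series; this is the content of the extension \cite{B1}, and everything else is formal. Two smaller points need care. First, one must check that the $\text{GL}_d({\mathbb C})$-action on $A$ is rational, so that $A$ indeed belongs to the class of algebras to which the hypothesis applies; this holds because the action is assembled from the polynomial decomposition $A\cong\bigoplus_\lambda m_\lambda W_\lambda$. Second, the phrase ``denominator similar to the denominators in the commutative case'' is to be read as the literal output of Hilbert-Serre for the finitely generated commutative graded algebra $A^G$, so nothing more is needed once $A^G$ is known to be finitely generated. (For reductive $G$ one could alternatively integrate Berele's rational multigraded series against Haar measure, in the spirit of the Molien-Weyl formula, to evaluate $\sum_\lambda m_\lambda\dim W_\lambda^G\,t^{|\lambda|}$ directly; but that route misses the non-reductive groups allowed by the hypothesis, so I would use the commutative-model argument.)
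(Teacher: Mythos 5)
Your opening and closing steps are fine: in characteristic $0$ each homogeneous component of $F_d(\text{var}R)$ is a polynomial $\text{GL}_d({\mathbb C})$-module, so $H(F_d^G(\text{var}R),t)=\sum_n\bigl(\sum_{|\lambda|=n}m_\lambda\dim W_\lambda^G\bigr)t^n$ depends only on the graded $\text{GL}_d$-module structure, and \emph{if} one had a finitely generated commutative ${\mathbb N}_0$-graded algebra $A$ with $A_0={\mathbb C}$, a rational $\text{GL}_d$-action by graded automorphisms, and $A\cong F_d(\text{var}R)$ as graded $\text{GL}_d$-modules, then the hypothesis on $G$ together with the Hilbert--Serre theorem (Theorem \ref{Hilbert series of commutative algebras} (i)) would finish the proof. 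The genuine gap is precisely that middle step, which you acknowledge as ``the main obstacle'' and then dispose of by attributing it to Berele \cite{B1}. That is a misattribution: Belov \cite{Be1} and Berele \cite{B1} prove that the (multi)graded Hilbert series and the multiplicity (cocharacter) series of $F_d(\text{var}R)$ are rational functions of a special shape; neither result produces a finitely generated commutative $\text{GL}_d$-algebra whose graded $\text{GL}_d$-module structure reproduces that of $F_d(\text{var}R)$, and it is not at all clear that such an algebra exists (realizing a prescribed multiplicity function $\{m_\lambda\}$ by a commutative graded algebra with $A_0={\mathbb C}$ is subject to nontrivial constraints, and you offer no construction). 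So the proposal assumes the hard part rather than proving it.

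Note also that the present survey states the theorem without proof, citing Domokos and Drensky \cite{DomD2} and naming \cite{Be1, B1} as the key ingredients; the argument there does not construct an equivariant commutative model of $F_d(\text{var}R)$. Instead it exploits the Belov--Berele rationality in its multigraded/multiplicity form and applies the hypothesis on $G$ to suitable auxiliary finitely generated commutative $\text{GL}_d$-algebras built from representation theory, so that the series $\sum_\lambda \dim(W_\lambda^G)\,t^\lambda$ is also rational of controlled shape, and then combines the two rational series to evaluate $\sum_\lambda m_\lambda\dim(W_\lambda^G)\,t^{|\lambda|}$, in the spirit of the Elliott--MacMahon manipulations mentioned in connection with \cite{BBDGK}. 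If you want to rescue your route, you must actually prove the equivariant realization; and even the weaker statement that $F_d(\text{var}R)$ is $\text{GL}_d$-module isomorphic to a finitely generated graded \emph{module} $M$ over a finitely generated commutative $\text{GL}_d$-algebra $A$ would still need an extra reduction (for instance passing to the trivial extension $A\oplus M$) because the hypothesis on $G$ is formulated for algebras, not for modules.
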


More applications for computing Hilbert series of invariants of classical groups and important numerical invariants of PI-algebras
can be found in the paper by Benanti, Boumova, Drensky, Genov and Koev \cite{BBDGK}.
Here the usage of derivations is combined with the classical method for solving in nonnegative integers
systems of linear Diophantine equations and inequalities discovered by Elliott \cite{E} from 1903
and its further development by MacMahon \cite{MM} in his ``$\Omega$-Calculus'' or Partition Analysis.

If we go to free nonassociative algebras, the Hilbert series of the algebras of invariants may be even more far from rational than
in the case of free associative algebras.
We shall complete our article with the following result in Drensky and Holtkamp \cite{DH}.

\begin{theorem}
Let ${\mathbb C}\{X_2\}$ be the free two-generated nonassociative algebra. Then the Hilbert series of the algebras of invariants
${\mathbb C}\{X_2\}^{\text{\rm SL}_2}$ and ${\mathbb C}\{X_2\}^{\text{\rm UT}_2}$ are elliptic integrals:
\[
H({\mathbb C}\{X_2\}^{\text{\rm SL}_2},t)=\int_0^1\sin^2(2\pi u)\left(1-\sqrt{1-8t\sin(2\pi u)}\right)du,
\]
\[
H({\mathbb C}\{X_2\}^{\text{\rm UT}_2},t)=\int_0^1\cos^2(\pi u)\left(1-\sqrt{1-8t\cos(2\pi u)}\right)du.
\]
\end{theorem}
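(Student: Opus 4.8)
The plan is to combine the planar-tree description of the free magma with an integration over a maximal torus (a Molien-type formula), so that the Hilbert series appears as an integral of the Catalan generating function against an explicit weight. Write $V=\mathbb C X_2$ for the standard two-dimensional $\mathrm{GL}_2(\mathbb C)$-module. A homogeneous basis of $\mathbb C\{X_2\}$ in degree $n$ is indexed by pairs consisting of a planar binary rooted tree with $n$ leaves together with an assignment of a variable from $X_2$ to each leaf, and the action of $\mathrm{GL}_2$ touches only the variables; hence $\mathbb C\{X_2\}_n\cong c_n\cdot V^{\otimes n}$ as $\mathrm{GL}_2$-modules, where $c_n$ is the $n$-th Catalan number. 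Consequently, for every subgroup $G\subseteq\mathrm{GL}_2(\mathbb C)$,
\[
H(\mathbb C\{X_2\}^G,t)=\sum_{n\ge 1}c_n\,d_n^G\,t^n,\qquad d_n^G:=\dim\bigl(V^{\otimes n}\bigr)^G .
\]
Using the Catalan series $\sum_{n\ge 1}c_n s^n=\tfrac12\bigl(1-\sqrt{1-4s}\bigr)$: once $d_n^G$ is written as $\int(2\cos\phi)^n\,d\mu_G(\phi)$ for a suitable measure $\mu_G$ on the circle, interchanging summation and integration (legitimate for $t$ in a neighbourhood of $0$ by uniform convergence) gives $H(\mathbb C\{X_2\}^G,t)=\int\tfrac12\bigl(1-\sqrt{1-8t\cos\phi}\bigr)\,d\mu_G(\phi)$.

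For $G=\mathrm{SL}_2$ the measure is supplied by the Weyl integration formula: on a maximal torus $\chi_{V^{\otimes n}}$ equals $(2\cos\phi)^n$ and the Weyl measure is $\tfrac1\pi\sin^2\phi\,d\phi$ on $[0,2\pi]$, whence $d_n^{\mathrm{SL}_2}=\tfrac1\pi\int_0^{2\pi}(2\cos\phi)^n\sin^2\phi\,d\phi$ (this vanishes for odd $n$, since $-I$ acts by $(-1)^n$). Performing the interchange and then putting $\phi=2\pi u$ yields
\[
H\bigl(\mathbb C\{X_2\}^{\mathrm{SL}_2},t\bigr)=\int_0^1\sin^2(2\pi u)\Bigl(1-\sqrt{1-8t\cos(2\pi u)}\Bigr)\,du ,
\]
which is the $\mathrm{SL}_2$ formula of the theorem, the radicand carrying $\cos$; a look at the coefficient of $t^2$, which must equal $c_2\dim(V\otimes V)^{\mathrm{SL}_2}=1$, pins down that it is $\cos$ and not $\sin$.

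For $G=\mathrm{UT}_2$ there is no Molien--Weyl formula, since $\mathrm{UT}_2\cong\mathbb{G}_a$ is not reductive; instead I would observe that $\bigl(V^{\otimes n}\bigr)^{\mathrm{UT}_2}$ is the kernel of the nilpotent generator of $\mathfrak{sl}_2$ acting on $V^{\otimes n}$, i.e. the span of the highest-weight vectors, whose number is the total number of irreducible $\mathrm{SL}_2$-summands of $V^{\otimes n}$. By the usual telescoping of weight multiplicities this number is $\dim$ of the weight-$0$ (resp. weight-$1$) subspace of $V^{\otimes n}$, namely $\binom{n}{\lfloor n/2\rfloor}$, in accordance with the Drensky--Gupta formula $H(\mathbb C\langle X_2\rangle^{\mathrm{UT}_2},t)=\sum_n\binom{n}{\lfloor n/2\rfloor}t^n$ recalled above. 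Writing $\binom{n}{\lfloor n/2\rfloor}$ as the constant term of $(z+z^{-1})^n(1+z^{-1})$ on $|z|=1$ gives $d_n^{\mathrm{UT}_2}=\tfrac1{2\pi}\int_0^{2\pi}(2\cos\phi)^n(1+\cos\phi)\,d\phi$, and the same computation, together with $1+\cos(2\pi u)=2\cos^2(\pi u)$, produces the asserted expression $\int_0^1\cos^2(\pi u)\bigl(1-\sqrt{1-8t\cos(2\pi u)}\bigr)\,du$.

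Finally, to justify the name: in either integral substitute $z=e^{2\pi i u}$, so that $2\cos(2\pi u)=z+z^{-1}$, the trigonometric weights become rational functions of $z$, and $\sqrt{1-4t(z+z^{-1})}=z^{-1/2}\sqrt{-4tz^2+z-4t}$; after the further substitution $z=w^2$ the integrand is a rational function of $w$ times the square root of a degree-four polynomial in $w$, which is the classical form of an elliptic integral. The steps I expect to be the genuine obstacles, rather than bookkeeping, are: the representation-theoretic input for $\mathrm{UT}_2$ (non-reductivity forces the separate identification $d_n^{\mathrm{UT}_2}=\binom{n}{\lfloor n/2\rfloor}$ and then a combinatorial identity to place it under an integral sign); the rigorous justification of the sum--integral interchange on a punctured disc around $t=0$; and the verification that the relevant quartic has nonzero discriminant as a polynomial in $w$ over $\mathbb C(t)$, so that the resulting integrals are genuinely elliptic and not elementary.
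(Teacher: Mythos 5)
Your argument is correct and is essentially the proof of Drensky--Holtkamp that the paper cites (a noncommutative Molien--Weyl formula): decompose $\mathbb{C}\{X_2\}_n\cong c_n\,V^{\otimes n}$ as $\mathrm{GL}_2$-modules, write $\dim(V^{\otimes n})^{G}$ as an integral over the circle --- by Weyl integration for $\mathrm{SL}_2$, and for $\mathrm{UT}_2$ by counting highest-weight vectors, $\binom{n}{\lfloor n/2\rfloor}$ --- and sum the Catalan series under the integral sign. Your coefficient check is also right: the $\mathrm{SL}_2$ formula as printed contains a typo, since with the weight $\sin^2(2\pi u)$ the radicand must be $1-8t\cos(2\pi u)$ (equivalently, keep $\sin(2\pi u)$ in the radicand and change the weight to $\cos^2(2\pi u)$, via $u\mapsto\tfrac14-u$), as otherwise the coefficient of $t^2$ would be $3$ rather than the correct $c_2\dim(V^{\otimes 2})^{\mathrm{SL}_2}=1$; the $\mathrm{UT}_2$ formula as printed agrees with your derivation.
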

The proof uses a noncommutative analogue of the Molien-Weyl integral formula for the Hilbert series in classical invariant theory
(which is an integral version of the Molien formula for finite groups \cite{We1, We2}).

It would be interesting to obtain the Hilbert series for algebras of invariants for the groups $SL_2(K)$ and $UT_2(K)$
acting on other free $\Omega$-algebras, as well for the invariants of other important groups.

\end{document}